\numberwithin{equation}{section}
\numberwithin{figure}{section}
\newtheorem{theorem}{Theorem}[section]
\newtheorem{lemma}{Lemma}[section]
\newtheorem{proposition}{Proposition}[section]
\newtheorem{remark}{Remark}[section]
\numberwithin{equation}{section}
\newcommand{\nc}{\newcommand}
\def\R{\mathbb{R}}
\nc{\diff}[2]{\frac{d #1}{d #2}}
\nc{\diffn}[3]{\frac{d^{#3} #1}{d {#2}^{#3}}}
\nc{\pdiff}[2]{\frac{\partial #1}{\partial #2}}
\nc{\pdiffn}[3]{\frac{\partial^{#3} #1}{\partial{#2}^{#3}}}
\nc{\abs}[1] {\lvert #1 \rvert}
\nc{\norm}[2] {{\lVert #1 \rVert}_{#2}}
\nc{\threeline}[1] {\lvert \lvert \lvert #1 \rvert\rvert\rvert}
\nc{\iamMc}{\frac{i\alpha-M}{c}}
\nc{\iapMc}{\frac{i\alpha+M}{c}}
\nc{\bXYZ}{{\bf XYZ\ }}
\nc{\GW}{{\bf GW1\ }}
\nc{\Rem}{{\rm Rem}}
\nc{\loc}{{\rm loc}}
\nc{\cF}{{\cal F}}
\nc{\cG}{{\cal G}}
\nc{\cO}{{\cal O}}
\nc{\cQ}{{\cal Q}}
\nc{\cR}{{\cal R}}
\nc{\cS}{{\cal S}}
\nc{\sqrtE}{\mu}
\nc{\cI}{{\cal I}}
\nc{\cK}{{\cal K}}
\nc{\cL}{{\cal L}}
\nc{\cM}{{\cal M}}
\nc{\cN}{{\cal N}}
\nc{\cE}{{\cal E}}
\nc{\cH}{{\cal H}}
\nc{\cZ}{{\cal Z}}
\nc{\cT}{{\cal T}}
\nc{\rhoo}{{(z\cdot\xi)}}
\nc{\omegaa}{{(z\cdot\eta)}}
\nc{\order}{{\cal O}}
\nc{\ores}{{\omega_{\rm res}}}
\nc{\nit}{\noindent}
\nc{\Eplus}{E_+}
\nc{\Eminus}{E_-}
\nc{\Epm}{E_\pm}
\nc{\w}{\omega}
\nc{\eps}{\epsilon}
\nc{\e}{\varepsilon}
\nc{\g}{\gamma}
\nc{\z}{\zeta}
\nc{\G}{\Gamma}
\nc{\nn}{\nonumber}
\nc{\D}{\partial}
\nc{\pZ}{\partial_Z}
\nc{\pT}{\partial_T}
\nc{\pz}{\partial_z}
\nc{\pt}{\partial_t}
\nc{\vu}{\Vec u}
\nc{\vE}{\Vec {\cal E}}
\nc{\vr}{\Vec r}
\nc{\vrho}{\Vec \rho}
\nc{\Reps}{R^{\e}}
\nc{\Vreps}{\Vec \Reps}
\nc{\half}{\frac{1}{2}}
\nc{\bphi}{\bar{\phi}}
\nc{\efour}{{\Hat e}_4}
\nc{\marginnote}[1] {\marginpar{\tiny #1}}
\numberwithin{equation}{section}
\begin{document}
\title{Equipartition of  Mass  in Nonlinear Schr\"odinger / Gross-Pitaevskii Equations \\ }\author{Zhou Gang$^{\ast}$, \
Michael I. Weinstein$^{\dag}$}
\maketitle
\centerline{\small{$^{\ast}$Institute for Theoretical Physics, ETH Z\"urich, Switzerland}} \centerline{\small{$^{\dag}$Department of
Applied Physics and Applied Mathematics, Columbia University, New York, U.S.A.}}
\setlength{\leftmargin}{.1in}
\setlength{\rightmargin}{.1in}
\normalsize \vskip.1in \setcounter{page}{1}
\setlength{\leftmargin}{-.2in} \setlength{\rightmargin}{-.2in}
\section*{Abstract}

We study the infinite time dynamics of a class of nonlinear Schr\"odinger / Gross-Pitaevskii equations. In a previous paper, \cite{GaWe}, we prove the asymptotic stability of the nonlinear ground state in a general situation which admits degenerate {\it neutral modes} of arbitrary finite multiplicity, a typical situation in systems with symmetry. Neutral modes correspond  to
 purely imaginary (neutrally stable) point spectrum of the
 linearization of the Hamiltonian PDE about a critical point. In particular,  a small perturbation of the nonlinear ground state, which typically excites such neutral modes and radiation, will evolve toward an asymptotic nonlinear ground state soliton plus decaying neutral modes plus decaying radiation. In the present article, we give a much more detailed, in fact quantitative, picture of the asymptotic evolution. Specificially we prove an {\it equipartition law}:\\
The asymptotic soliton which emerges, $\phi^{\lambda_\infty}$, has a mass which is equal to the initial soliton mass  plus
 one half the mass, $|z_0|^2$,  contained in initially perturbing neutral modes:
 \begin{equation}
 \|\phi^{\lambda_\infty}\|_{L^2}^2\ =\   \|\phi^{\lambda_0}\|_{L^2}^2\ +\ \frac{1}{2}|z_0|^2\ +\ o(|z_0|^2)\
 \nn\end{equation}
\section{Introduction}

In this paper we study the nonlinear Schr\"odinger / Gross-Pitaevskii (NLS/GP) equations in $\mathbb{R}^{3}$
\begin{align}\label{eq:NLS}
i\D_t\psi&=-\Delta \psi+V\psi-|\psi|^{2\sigma}\psi,
\end{align}
where $\sigma\geq 1$, $V:\mathbb{R}^{3}\rightarrow \mathbb{R}$ is a real, smooth function decaying rapidly at spatial infinity.
 We study the large time distribution of mass /  energy  of solutions  with initial data
 \begin{equation}
 \psi(x,0)\ =\ \psi_0,
 \label{data}
 \end{equation}
 which are sufficiently  small in the ${H}^2(\mathbb{R}^{3})$ norm \footnote{Since our results are in the low energy / small amplitude regime, our analysis goes through without change for the nonlinearities of the form $+g|\psi|^{2\sigma}\psi$ for any fixed real $g$. Here, we have taken $g=1$.}

NLS/GP arises in many physical contexts. In quantum physics, it describes a mean-field limit, $N\to\infty$, of the  linear quantum description of $N-$ weakly interacting bosons. Here, $\psi$ is a collective wave-function and $V$, a trapping potential, and the nonlinear potential arises due to the collective effect of many quantum particles on a  representative
 particle \cite{Pitaevskii-Stringari:03,ESY:07}.  In classical electromagetics and nonlinear optics,  NLS/GP arises via the {\it paraxial approximation} to Maxwell's equations,  and governs the slowly varying envelope, $\psi$, of a nearly monochromatic beam of light, propagating through a waveguide \cite{Boyd:08,Newell-Maloney:03}. The waveguide has linear refractive index profile, determining the potential, $V$,  and cubic ($\sigma=1$) nonlinear refractive index, due to the optical Kerr effect.

NLS/GP is a infinite-dimensional Hamiltonian system and a unitary evolution in $L^2(\mathbb{R})$. In the $N-$ body quantum setting the time-invariant $L^2$ norm corresponds to the conservation of mass. In the electromagnetic setting, it is the conservation of energy (optical power). In this paper, we prove an equipartition law
(Theorem \ref{SEC:MainTheorem}) for the  $L^2$ mass / energy
 small (weakly nonlinear) solutions. Hence, we may refer to this result equipartition of energy or equipartition of mass.
\bigskip

The mathematical set-up is as follows. We choose a spatially decaying potential $V $ for which the Schr\"odinger operator, $-\Delta+V$, has only two negative eigenvalues
\begin{equation}
e_0\ <\ e_1<0.
\nn\end{equation}
$e_1$ is chosen to be closer to the continuous spectrum than to $e_0$,
(permitting coupling via nonlinearity of discrete and continuum modes at quadratic order in the nonlinear coupling coefficient, $g$):
\begin{equation}
 2e_1\ -\ e_0\ >\ 0.
 \nn\end{equation}
  The excited state eigenvalue $e_1$ may be degenerate with multiplicity $N$. (In Section \ref{SEC:summary}, we allow for nearly degenerate excited state eigenvalues.)  Denote the corresponding eigenvectors by
\begin{equation}\label{eq:excitedMode}
  \phi_{lin},\ \ \xi_1^{lin}, \cdots,\ \xi_{N}^{lin}.
\end{equation}

For NLS/GP,  \eqref{eq:NLS}, there is a family of {\it nonlinear ground states} which bifurcates from the zero solution in the direction of $\phi_{lin}$. The excited state eigenvectors  are manifested as neutral modes (time periodic states with non-zero frequency) of the linearized NLS/GP equation about the ground state family; see Section \ref{SEC:GroundState}.

More specifically,  there exists an open interval $\mathcal{I}$, with $e_0$ as an endpoint, such that for any
   $\lambda\in \mathcal{I}$, NLS/GP~\eqref{eq:NLS} has solutions of the form
\begin{equation}\label{eq:groundstate}
\psi(x,t)=e^{i\lambda t}\phi^{\lambda}(x),
\end{equation}
where $\phi^{\lambda}$ is asymptotically collinear to  $\phi_{lin}$ for small  $H^2$ norm and $\lambda\to-e_0,\ \lambda\in\mathcal{I}$.

The excited state
eigenvalues give rise, in the linear approximation, to neutral modes, $(\xi,\eta)^T$, and therefore linearized time-dependent solutions, which are undamped (neutral) oscillations about $\phi^\lambda$ :
  \begin{equation}\label{eq:LinApp}
   e^{i\lambda t} \left(\ \phi^{\lambda} +\ (\Re z)\cdot\xi\ +\ i(\Im z)\cdot\eta\
\ \right)
\end{equation}
where $z\in\mathbb{C}^N$.

  %

In ~\cite{GaWe}, also referred to in this paper as \GW,  we proved the asymptotic stability of the ground states. Namely, if the initial condition is of the form
\begin{equation}\label{eq:asymStab}
\psi_0= e^{i\gamma_0}[\phi^{\lambda_0}+ R_0]
\end{equation} for some $\gamma_0\in\mathbb{R}$ and $R_0:\mathbb{R}^3\rightarrow \mathbb{C}$ satisfying $\|\langle x\rangle^{4}R_0\|_{H^2}\ll \|\phi^{\lambda_0}\|_{2},$ then generically there exists a $\lambda_{\infty}\in \mathcal{I}$ such that
\begin{equation}\label{eq:asympto2}
\min_{\gamma\in \mathbb{R}}\|\psi(t)-e^{i\gamma}\phi^{\lambda_{\infty}}\|_{\infty}\rightarrow 0\ \text{as}\ t\rightarrow \infty\ ;
\end{equation}
In particular, the neutral oscillatory modes eventually damp to zero as $t\to\infty$ via the coupling and transfer of their energy to the nonlinear ground state and to continuum radiation modes. When the neutral mode is simple, i.e. $N=1$ in ~\eqref{eq:excitedMode}, similar results have been obtained in \cite{SW:99,TsaiYau02,BuSu,SW:04,GS2,Cuccagna:03}.

In the present paper, we seek a more detailed, quantitative description of the large time dynamics.
  We consider a special class of  initial conditions to which the results of \GW, in particular, \eqref{eq:asympto2} apply:
$$\psi_0=e^{i\gamma_0}\phi^{\lambda_0}\ +\ {\rm neutral\ modes}\ +\ R_0$$  with
 $$\|\phi^{\lambda_0}\|_{2}\gg\  \|{\rm neutral\ modes}\|_2\ \gg \|\langle x\rangle^{4}R_0\|_{H^2}.$$
The main result of this paper, proved by a considerable refinement of the analysis in \cite{GaWe},  is that the emerging asymptotic ground state has, up to high order corrections, a mass equal to its initial mass plus one-half of the mass of the initial excited state mass:
\begin{equation}
\|\phi^{\lambda_{\infty}}\|_{2}^2\ =\ \|\phi^{\lambda_0}\|_{2}^{2}\ +\
\frac{1}{2}\ \|{\rm neutral\ modes}\|_2^2\ \left(\ 1+o(1)\ \right).
\nn\end{equation}
Thus, half of the excited state mass goes into forming a limiting, more massive,  ground state, $\phi^{\lambda_\infty}$ and the other half of the excited state mass is radiated away to infinity. We call this the  {\it mass-} or {\it energy- equipartition}. That this phenomenon is expected, was discussed in
~\cite{SW:99,SW:04,SW-PRL:05}. The main achievement of the present work is a  {\it rigorous quantification} of the asymptotic ($t\to\infty$) mass / energy distribution. \bigskip

The paper is organized as follows: In Section ~\ref{SEC:GroundState} we review  results on the existence and properties of the ground state manifold, and on the spectral properties of the linearized NLS/GP operator about the ground state. In Section ~\ref{SEC:MainTheorem} we state and discuss Theorem  \ref{THM:MassTransfer} on equipartition. In Section ~\ref{sec:ProveMainTHM} we present the proofs, using technical estimates established in the appendices, {\it e.g.} Sections ~\ref{sec:approxPos}-~\ref{sec:compare}. In Section ~\ref{SEC:summary}, we present a generalization of the Theorem \ref{THM:MassTransfer} to the case of nearly degenerate case, and an outline of its proof.  A more extensive list of references and a discussion of related work on NLS/GP appears in \GW~.

\section*{Acknowledgments} ZG was supported, in part, by a Natural Sciences and Engineering Research Council of Canada (NSERC)  Postdoctoral Fellowship and NSF Grant DMS-04-12305 .
 MIW was supported, in part, by  U.S.
NSF Grants DMS-04-12305, DMS-07-07850 and DMS-10-08855. This work was initiated while ZG was a visitor at the Department of Applied Physics and Applied Mathematics at Columbia University, and was continued while he was a visiting postdoctoral fellow at the Department of Mathematics of Princeton University.
\subsection{Notation}\label{notation}
\begin{itemize}
\item[(1)]\
$
\alpha_+ = \max\{\alpha,0\},\ \ [\tau]=\max_{\eta\in \mathbb{Z}}\ \{\eta\le\tau\}
$
\item[(2)]\ $\Re z$ = real part of $z$,\ \ $\Im z$ = imaginary part of $z$
\item[(3)] Multi-indices
\begin{align}
&z\ =\ (z_1,\dots, z_N) \in \mathbb{C}^N,\ \bar{z}=(\bar{z}_1,\dots, \bar{z}_N)\\
&a\in \mathbb{N}^N,\ z^a=z_1^{a_1}\cdot\cdot\cdot z_N^{a_N}\nonumber\\
&|a|\ =\ |a_1|\ +\ \dots\ +\ |a_N|
\nonumber
\end{align}
\item[(4)] $Q_{m,n}$ denotes an expression of the form
\begin{equation}
Q_{m,n}\ = \sum_{|a|=m,\ |b|=n}\ q_{a,b}\ z^a\ \bar{z}^b\
          = \sum_{|a|=m,\ |b|=n}\ q_{a,b}\prod_{k=1}^{N}\ z_k^{a_k}\bar{z_k}^{b_k} \nonumber
\end{equation}
\item[(5)]  \begin{equation}
J\ =\ \left(\begin{array}{cc} 0 & 1\\ -1 & 0\end{array}\right),
\ \ H\ =\ \left(\begin{array}{cc} L_+ & 0\\ 0 & L_-\end{array}\right),\ \
 L=JH=\left(\begin{array}{cc} 0 & L_-\\ -L_+ & 0\end{array}\right)
\nonumber\end{equation}
\item[(6)] $\sigma_{ess}(L)=\sigma_c(L)$ is the essential (continuous) spectrum of $L$,\\ $\sigma_{disc}(L)$ is the discrete spectrum of $L$.
\item[(7)]\ Riesz projections: $P_{disc}(L)$ and $P_c(L)=I-P_{disc}(L)$\\
 $P_{disc}(L)$ projects onto the discrete spectral part of $L$\\
$P_c(L)$ projects onto the continuous spectral part of $L$
\item[(8)]\ $\langle f,g\rangle = \int\ f(x)\ {\overline{g(x)}}\ dx $
\item[(9)]\ $\| f\|_p^p=\ \int_{\R^3}\ |f(x)|^p\ dx,\  \ 1\le p\le\infty$
\item[(10)]\ $\| f\|_{H^s(\mathbb{R}^3)}^2=\  \int \left|\left(I-\Delta_x\right)^{s\over2}f(x)\right|^2\ dx$
\item[(11)]\ $\| f\|_{\cH^{s,\nu}}^2\ =\  \int_{\R^3}\ \left|\langle x\rangle^\nu\ (I-\Delta)^{\frac{s}{2}}f(x)\right|^2\ dx$
\end{itemize}
%
\section{Review of the set up}\label{SEC:GroundState}
In this section we review the setting presented in detail in \cite{GaWe}.

\subsection{Assumptions on the potential, $V(x)$}\label{Vassumptions}
We assume that the Schr\"odinger operator $-\Delta+V$ has
the following properties:
%
\begin{enumerate}
\item[(V1)]
$V$ is real valued and decays sufficiently rapidly, {\it e.g.} exponentially,  as $|x|$ tends to infinity.\\
\item[(V2)]  $-\Delta+V$ has two eigenvalues $e_{0}<e_{1}<0$.\\
 $e_{0}$ is the lowest eigenvalue with
ground state $\phi_{lin}>0$, the eigenvalue $e_{1}$ is degenerate
with multiplicity $N$ and eigenvectors
$\xi_{1}^{lin},\xi_{2}^{lin},\cdot\cdot\cdot,\xi_{N}^{lin}.$
\end{enumerate}

\subsection{Bifurcation of ground states from $e_0$}
\begin{proposition}\label{bif-of-gs}
Suppose that the linear operator $-\Delta+V$ satisfies the
conditions above in subsection \ref{Vassumptions}. Then there exists a constant
$\delta_{0}>0$ and a nonempty interval $\mathcal{I}\subset [e_{0}-\delta_{0},
e_{0})$ such that for any $\lambda \in \mathcal{I}$, NLS/GP  (~\ref{eq:NLS}) has solutions of the form
$\psi(x,t)=e^{i\lambda t}\phi^{\lambda}\in {L}^{2}$\
with
\begin{equation}\label{eqn:perturb}
\phi^{\lambda}=\delta\left(\ \phi_{lin}+\cO(\delta^{2\sigma})\ \right)\ {\rm and}\
\delta=\delta(\lambda)=|e_{0}+\lambda|^{\frac{1}{2\sigma}}\left(\int \phi^{2\sigma+2}_{lin}\right)^{-\frac{1}{2\sigma}}.
\end{equation}
\end{proposition}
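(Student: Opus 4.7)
The plan is to construct the soliton branch by a Lyapunov--Schmidt reduction for the standing-wave equation
\begin{equation}
(-\Delta+V+\lambda)\phi\ =\ |\phi|^{2\sigma}\phi,
\nn\end{equation}
obtained by inserting $\psi=e^{i\lambda t}\phi$ into NLS/GP. Setting $\mu=e_0+\lambda$ (the small bifurcation parameter) and $L_0:=-\Delta+V-e_0$, the operator $L_0$ is self-adjoint with a simple zero eigenvalue spanned by $\phi_{lin}$ (normalized by $\|\phi_{lin}\|_2=1$), while the rest of $\sigma(L_0)$ is the eigenvalue $e_1-e_0>0$ (multiplicity $N$) together with the essential spectrum $[|e_0|,\infty)$. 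Hence $L_0$ is bounded below by a positive constant on $\phi_{lin}^\perp$, and $L_0+\mu$ is invertible there uniformly for small $|\mu|$.

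I would write $\phi=\delta\phi_{lin}+\eta$ with $\eta\in\phi_{lin}^\perp$ and project the stationary equation with $P=\langle\cdot,\phi_{lin}\rangle\phi_{lin}$ and $P^\perp=I-P$, obtaining
\begin{align}
\mu\delta\ &=\ \langle\,|\delta\phi_{lin}+\eta|^{2\sigma}(\delta\phi_{lin}+\eta),\,\phi_{lin}\rangle, \nn \\
(L_0+\mu)\eta\ &=\ P^\perp\bigl[\,|\delta\phi_{lin}+\eta|^{2\sigma}(\delta\phi_{lin}+\eta)\,\bigr]. \nn
\end{align}
Treating the second line as a fixed-point problem $\eta=(L_0+\mu)^{-1}P^\perp F(\delta,\eta)$ in a ball of radius $C\delta^{2\sigma+1}$ inside a weighted Sobolev space such as $\cH^{2,\nu}$ (which captures the spatial decay of $\phi_{lin}$ inherited from the decay of $V$), a contraction mapping / implicit function theorem argument produces a unique smooth solution $\eta(\delta,\mu)$ with $\|\eta\|_{H^2}=\cO(\delta^{2\sigma+1})$. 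Substituting back into the scalar projection yields the reduced bifurcation equation
\begin{equation}
\mu\ =\ \delta^{2\sigma}\int \phi_{lin}^{2\sigma+2}\,dx\ +\ \cO(\delta^{4\sigma}),
\nn\end{equation}
which is invertible in $\delta^{2\sigma}$ by the implicit function theorem; inversion produces $\delta(\lambda)=|e_0+\lambda|^{1/(2\sigma)}\bigl(\int\phi_{lin}^{2\sigma+2}\bigr)^{-1/(2\sigma)}$ to leading order and, via the estimate on $\eta$, the expansion $\phi^\lambda=\delta(\phi_{lin}+\cO(\delta^{2\sigma}))$. The interval $\cI$ is then any sufficiently short half-neighborhood of the bifurcation point on which this construction applies.

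The main obstacle is the selection of a function space in which (i) $L_0+\mu$ is uniformly invertible on $\phi_{lin}^\perp$ down to $\mu=0$, (ii) the nonlinearity $\phi\mapsto|\phi|^{2\sigma}\phi$ is at least $C^1$ (for non-integer $\sigma$, this requires exploiting the pointwise positivity $\phi\approx\delta\phi_{lin}>0$ on the branch, so that $|\phi|^{2\sigma}\phi=\phi^{2\sigma+1}$ is smooth), and (iii) the spatial decay of $\phi^\lambda$ needed in the subsequent equipartition analysis is built in. The weighted scale $\cH^{s,\nu}$ already introduced in the notation section handles all three simultaneously. The remaining conclusions --- positivity, smooth dependence on $\lambda$, and uniqueness of the branch up to sign --- follow from the implicit function theorem at a simple eigenvalue in the classical manner of Crandall--Rabinowitz.
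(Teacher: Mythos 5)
Your Lyapunov--Schmidt / contraction-mapping argument is essentially the same route the paper takes: in the proof of Proposition~\ref{Prop:Parameters} (Appendix~\ref{sec:CorrectionNormal}) the paper also writes $\phi^{\lambda}=\delta\phi_{lin}+\phi_{Re}$ with $\phi_{Re}\perp\phi_{lin}$, projects onto $\phi_{lin}$ and its orthogonal complement to obtain precisely your scalar bifurcation equation and the $\phi_{Re}$-equation, and solves by contraction mapping in the weighted space $\langle x\rangle^{-4}H^{2}$ using the bounded invertibility of $(-\Delta+V+\lambda)^{-1}P_{c}$ there. The only cosmetic difference is the direction of parametrization (the paper fixes $\lambda+e_0$ and solves for the pair $(\delta^{2\sigma},\delta^{-1}\phi_{Re})$, while you fix $\delta$ and then invert the reduced scalar equation for $\delta(\lambda)$), which is an equivalent reading of the same implicit-function-theorem step.
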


\subsection{Linearization of NLS/GP about the ground state}

If we write $\psi(x,t)=e^{i\lambda t}\left(\ \phi^\lambda\ + u +\ iv\ \right)$, then we find  the linearized perturbation equation to be:
\begin{equation}
\frac{\D}{\D t}\left(\begin{array}{ll} u\\ v \end{array} \right)\ =\
 L(\lambda)\ \left(\begin{array}{ll} u\\ v \end{array} \right)\ =\ JH(\lambda)\ \left(\begin{array}{ll} u\\ v \end{array} \right),
 \label{linearized}
 \end{equation}
 where
\begin{equation}\label{eq:opera}
L(\lambda):=
\left(\begin{array}{lll}0&L_{-}(\lambda)\\
 -L_{+}(\lambda)&0 \end{array} \right)
 = \left(\begin{array}{lll}0&1\\
 -1&0 \end{array} \right)\ \left(\begin{array}{lll}L_+(\lambda)&0\\
0& L_{-}(\lambda) \end{array} \right)\ \equiv\ JH(\lambda)\ .
 \end{equation}

Here, $L_+$ and $L_-$ are given by:
\begin{align}
 L_{-}(\lambda)&\ :=\ -\Delta+\lambda+V-(\phi^{\lambda})^{2\sigma}\nn\\L_{+}(\lambda)&\ :=-\Delta+\lambda+V-(2\sigma+1)(\phi^{\lambda})^{2\sigma}\label{Lpm}
 \end{align}

\nit The following results on the point spectrum of $L(\lambda)$ appear in  ~\cite{GaWe};
 see Proposition 4.1, p. 275 and Propositions 5.1-5.2, p. 277:
\begin{lemma}\label{LM:NearLinear}
Let $L(\lambda)$, or more explicitly,  $L(\lambda(\delta),\delta)$
denote the linearized operator about the the bifurcating state
$\phi^\lambda, \lambda=\lambda(\delta)$. Note that $\lambda(0)=
-e_0$. Corresponding to the degenerate e-value, $e_1$, of
$-\Delta+V$, the matrix operator $L(\lambda=-e_0,\delta=0)$ has
degenerate eigenvalues $\pm iE(-e_0)=\pm i(e_1-e_0)$, each of
multiplicity $N$. For $\delta>0$ and small these bifurcate to
(possibly degenerate) eigenvalues $\pm iE_1(\lambda),\dots,$ $\pm
iE_N(\lambda)$ with neutral modes
$$\left(
\begin{array}{lll}
\xi_{1}\\
\pm i\eta_{1}
\end{array}
\right),\ \left(
\begin{array}{lll}
\xi_{2}\\
\pm i\eta_{2}
\end{array}
\right),\ \cdot\cdot\cdot, \left(
\begin{array}{lll}
\xi_{N}\\
\pm i\eta_{N}
\end{array}
\right)$$ satisfying the estimates
\begin{equation}\label{eq:Orthogonality}
\langle \xi_{m},\eta_{n}\rangle =\delta_{m,n},\ \langle \xi_{m},\phi^{\lambda}\rangle=\langle \eta_{m},\partial_{\lambda}\phi^{\lambda}\rangle=0
\end{equation}
and
\begin{equation}\label{eq:GoToNear}
0\not=\displaystyle\lim_{\lambda\rightarrow
e_{0}}\xi_{n}=\lim_{\lambda\rightarrow e_{0}}\eta_{n}\in
span\{\xi^{lin}_{n},\ n=1,2,\cdot\cdot\cdot,N\}\ \text{in}\
H^{k}\ \text{spaces for any}\ k>0.
\end{equation}
\end{lemma}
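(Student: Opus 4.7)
The plan is to read off the spectrum of $L(\lambda(\delta), \delta)$ at $\delta = 0$ directly from the underlying scalar operator $-\Delta + V$ and then continue analytically in $\delta$, using the Hamiltonian structure of $L = JH$ to keep the perturbed eigenvalues on the imaginary axis.

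First I would compute the unperturbed spectrum. Since $\phi^\lambda = \delta(\phi_{lin}+\cO(\delta^{2\sigma}))$ from Proposition~\ref{bif-of-gs}, at $\delta = 0$ (i.e.\ $\lambda = -e_0$) the operators $L_\pm$ in \eqref{Lpm} both collapse to $L_0 := -\Delta + V - e_0$. By (V2), $L_0$ has eigenvalues $0$ (simple, with eigenfunction $\phi_{lin}$) and $e_1 - e_0 > 0$ (multiplicity $N$, with basis $\{\xi_n^{lin}\}$), plus continuous spectrum on $[-e_0, \infty)$. A direct block-matrix computation shows that whenever $L_0 \chi = \mu \chi$,
\[
L(-e_0, 0) \begin{pmatrix} \chi \\ \pm i\chi \end{pmatrix} \;=\; \pm i\mu \begin{pmatrix} \chi \\ \pm i\chi \end{pmatrix},
\]
so $\pm i(e_1 - e_0)$ are eigenvalues of $L(-e_0, 0)$ of multiplicity $N$ with eigenvectors $(\xi_n^{lin}, \pm i \xi_n^{lin})^T$.

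Next I would continue these objects analytically in $\delta$. Because $\phi^\lambda$ depends smoothly on $\delta$ and the unperturbed eigenvalues $\pm i(e_1-e_0)$ are isolated from the rest of $\sigma(L(-e_0,0))$, Kato's theory of analytic perturbation of the total Riesz projection yields two $N$-dimensional invariant subspaces $\Sigma^\pm(\delta)$ depending smoothly on $\delta$, within which the eigenvalues split into (possibly still degenerate) $\pm i E_n(\lambda)$. The Hamiltonian symmetry $\sigma(L) = -\overline{\sigma(L)}$ forces a perturbed eigenvalue to leave the imaginary axis only by colliding in a quadruple $\{\mu,-\mu,\bar\mu,-\bar\mu\}$, which is impossible for $\delta$ small by isolation from $0$ and from $\sigma_{ess}(L)$. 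Seeking an eigenvector of $L$ at $iE$ in the form $(\xi, i\eta)^T$ with real $\xi,\eta$ reduces the eigenvalue problem to the coupled system
\[
L_+\xi \;=\; E\eta, \qquad L_-\eta \;=\; E\xi,
\]
which produces the structure $(\xi_n,\pm i\eta_n)^T$ in the lemma. The normalization $\langle \xi_m,\eta_n\rangle = \delta_{m,n}$ is obtained by diagonalizing the bilinear form $(\xi,\eta)\mapsto\langle \xi,\eta\rangle$ on $\Sigma^+(\delta)$: at $\delta=0$ it coincides with the $L^2$ inner product on the $e_1$-eigenspace of $-\Delta+V$, where $\{\xi_n^{lin}\}$ may be taken orthonormal, and non-degeneracy persists for small $\delta$. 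The remaining orthogonalities $\langle \xi_m,\phi^\lambda\rangle = \langle \eta_m,\partial_\lambda\phi^\lambda\rangle = 0$ come from a spectral decomposition argument: differentiating the nonlinear ground-state equation in $\lambda$ gives $L_-\phi^\lambda = 0$ and $L_+\partial_\lambda\phi^\lambda = -\phi^\lambda$, so $(\phi^\lambda,0)^T$ and $(0,\partial_\lambda\phi^\lambda)^T$ span the generalized kernel of $L$, which occupies a distinct spectral subspace from $\Sigma^\pm(\delta)$; the associated biorthogonal pairing then forces the stated vanishings. Continuity of the Riesz projection as $\delta\to 0$ in every $H^k$ norm, using elliptic regularity and exponential decay of the eigenfunctions, yields \eqref{eq:GoToNear}.

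The main obstacle will be the degenerate-perturbation step: on the $N$-dimensional eigenspace one must simultaneously diagonalize the matrix whose eigenvalues become the $E_n(\lambda)$ and the bilinear form $\langle \xi_m,\eta_n\rangle$, which should remain the identity. At $\delta=0$ both objects are proportional to the identity on the $e_1$-eigenspace and the joint diagonalization is trivial, but showing it persists for $\delta>0$ requires careful use of the self-adjointness of $H$, the reality of $\xi_n,\eta_n$, and the Hamiltonian symmetry of $L$ to guarantee that the reduced $N\times N$ eigenproblem remains real and the pairing stays non-degenerate.
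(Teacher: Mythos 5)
The paper itself does not prove Lemma~\ref{LM:NearLinear}: it cites Propositions 4.1 and 5.1--5.2 of \cite{GaWe}. So your proposal is necessarily a from-scratch reconstruction, and it has the right overall skeleton --- compute the spectrum of $L(-e_0,0)$ from $L_0=-\Delta+V-e_0$, continue the total Riesz projection analytically in $\delta$, and deduce the normalization and the $\phi^\lambda$-orthogonalities from biorthogonality of spectral subspaces of $L$ and $L^*$. The biorthogonality computation is in fact exactly the way \eqref{eq:Orthogonality} is obtained.

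However, there is one genuine gap: the step showing the bifurcated eigenvalues remain purely imaginary. Your argument invokes the symmetry $\sigma(L)=-\overline{\sigma(L)}$ and says an eigenvalue can leave the axis only by forming a quadruple, ``which is impossible for $\delta$ small by isolation from $0$ and from $\sigma_{ess}(L)$.'' This works only when $N=1$. For $N\ge2$ the $N$-fold eigenvalue $i(e_1-e_0)$ can split entirely \emph{within} its own cluster into pairs $\{a+ib,\,-a+ib\}$ with $a\neq 0$ (and conjugates near $-i(e_1-e_0)$), which is fully consistent with $\sigma(L)=-\overline{\sigma(L)}$ and requires no collision with $0$ or with the essential spectrum. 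Isolation of the cluster from the rest of the spectrum does not forbid this. What actually forbids it is a Krein-signature/positivity argument: the quadratic form $\langle H\cdot,\cdot\rangle$ restricted to the spectral subspace near $+i(e_1-e_0)$ equals $2(e_1-e_0)\langle\cdot,\cdot\rangle>0$ at $\delta=0$ and hence stays positive definite for small $\delta$; eigenvalues carrying a definite Krein sign cannot move off the imaginary axis without first colliding with ones of opposite sign, which by construction are not present in this cluster. Without this positivity input the key claim ``$\pm iE_n(\lambda)$ with $E_n(\lambda)>0$'' is unjustified for $N\ge2$, which is exactly the regime the paper cares about.

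Two smaller remarks. You have the generalized kernel vectors swapped: it is $(0,\phi^\lambda)^T$ in the kernel (since $L_-\phi^\lambda=0$) and $(\partial_\lambda\phi^\lambda,0)^T$ in the generalized kernel (since $L_+\partial_\lambda\phi^\lambda=-\phi^\lambda$), not $(\phi^\lambda,0)^T$ and $(0,\partial_\lambda\phi^\lambda)^T$; the orthogonality argument still goes through once this is fixed. You also correctly identify the simultaneous diagonalization of the reduced $N\times N$ eigenvalue problem and the pairing $\langle\xi_m,\eta_n\rangle$ as the remaining nontrivial step, but you leave it as an acknowledged obstacle rather than resolving it; the resolution in \cite{GaWe} uses precisely the positivity of $\langle H\cdot,\cdot\rangle$ on the cluster (which gives an inner product in which the reduced problem is symmetric), so closing the Krein gap above closes this one as well.
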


\begin{remark}\label{remark:2ndorderres} Since $E(-e_0)=e_1-e_0$, it follows that if $2e_1-e_0>0$, then for sufficiently small $\delta$, $2E_n(\lambda)>\lambda,\ n=1,2,\cdot\cdot\cdot,N$. This ensures nonlinear coupling of discrete to continuous spectrum at second order (in the nonlinearity coefficient, $g$). Thus, to ensure such coupling, we assume:
\begin{itemize}
\item[(V3)]
 \begin{equation} 2e_1-e_0>0.\ \label{2e1me0}
\end{equation}
\end{itemize}
\end{remark}

\begin{lemma}\label{mainLem2}
Assume the potential $V=V(|x|)$ and the functions $\xi^{lin}_{n}$ admit the form $\xi^{lin}_{n}=\frac{x_{n}}{|x|}\xi^{lin}(|x|)$ for some function $\xi^{lin}$, then $\phi^{\lambda}$, hence $\partial_{\lambda}\phi^{\lambda}$, is spherically symmetric, $E_{n}=E_{1}$ for any $n=1,2,\cdot\cdot\cdot,N=d$ and we can choose $\xi_{n},\eta_{n}$ such that $\xi_{n}=\frac{x_{n}}{|x|}\xi(|x|)$ and $\eta_{n}=\frac{x_{n}}{|x|}\eta(|x|)$ for some real functions $\xi$ and $\eta.$
\end{lemma}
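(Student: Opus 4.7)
The plan is to exploit the $SO(3)$-symmetry that the radial hypothesis on $V$ imparts to the entire construction. First, I would show that $\phi^{\lambda}$ is spherically symmetric. The nonlinear eigenvalue equation $(-\Delta+V+\lambda)\phi-\phi^{2\sigma+1}=0$ is rotation-invariant when $V=V(|x|)$, and $\phi_{lin}$, the non-degenerate ground state of $-\Delta+V$ from which the branch bifurcates in Proposition \ref{bif-of-gs}, is radial by a standard maximum-principle argument. Local uniqueness of the bifurcation branch, via the implicit function theorem on the Lyapunov--Schmidt reduction underlying Proposition \ref{bif-of-gs}, then forces $\phi^{\lambda}(Rx)=\phi^{\lambda}(x)$ for every $R\in SO(3)$; differentiating in $\lambda$ shows that $\partial_{\lambda}\phi^{\lambda}$ is radial as well.

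With $\phi^{\lambda}$ radial, both scalar coefficients in (\ref{Lpm}) are radial, so each of $L_{\pm}(\lambda)$ commutes with the natural $SO(3)$-action $(R\cdot u)(x)=u(R^{-1}x)$; since $J$ acts only on the internal $(u,v)^{T}$ index, the block operator $L(\lambda)=JH(\lambda)$ commutes with this action as well, and each spectral subspace of $L(\lambda)$ becomes a representation of $SO(3)$.

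Next I would analyze the subspace bifurcating from $i(e_{1}-e_{0})$. By (\ref{eq:GoToNear}), at $\delta=0$ this subspace coincides with the $e_{1}$-eigenspace of $-\Delta+V$, spanned by the $\xi_{n}^{lin}=(x_{n}/|x|)\xi^{lin}(|x|)$; this is the standard irreducible ($\ell=1$) vector representation of $SO(3)$, of dimension $N=d=3$. A Riesz-projection perturbation argument, integrating $(zI-L(\lambda))^{-1}$ along a small contour enclosing the cluster $\{iE_{n}(\lambda)\}$, produces a total spectral subspace depending continuously on $\delta$, which therefore inherits the same irreducible $SO(3)$-module structure for small $\delta$. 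Schur's lemma then forces $L(\lambda)$ to act as a scalar on this subspace, so $E_{n}(\lambda)=E_{1}(\lambda)$ for all $n$.

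Finally, inside a module isomorphic to the $\ell=1$ representation one can choose a basis transforming like the coordinate vectors $\{e_{n}\}$; any such basis consists of functions of the form $(x_{n}/|x|)f(|x|)$ for a single radial profile $f$. Applied to the first and second components of the pairs $(\xi_{n},i\eta_{n})^{T}$ separately, this yields the claimed $\xi_{n}=(x_{n}/|x|)\xi(|x|)$ and $\eta_{n}=(x_{n}/|x|)\eta(|x|)$; reality of $\xi,\eta$ follows from reality of $\xi_{n},\eta_{n}$, already built into Lemma \ref{LM:NearLinear}. The main obstacle I anticipate is the continuity-of-representation step in the non-self-adjoint setting of $L(\lambda)$: I would need to verify that no eigenvalue escapes the contour and that the total Riesz projector depends analytically on $\delta$ while commuting with rotations, after which irreducibility of the $\ell=1$ representation does the conceptual work.
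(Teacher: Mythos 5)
The paper states Lemma~\ref{mainLem2} without proof, so there is no argument of the authors' to compare yours against; I can only assess your argument on its own terms. Your representation-theoretic proof is complete and correct. The chain of reasoning — (i) radiality of $\phi^\lambda$ from rotational invariance of the nonlinear equation plus local uniqueness of the bifurcating branch, (ii) $SO(3)$-equivariance of $L_\pm(\lambda)$ and hence of $L(\lambda)=JH(\lambda)$, (iii) continuity in $\delta$ of the Riesz projector onto the eigencluster near $i(e_1-e_0)$, which forces the range to stay a copy of the $\ell=1$ module by rigidity of representation type, and (iv) Schur's lemma applied to the equivariant restriction $L(\lambda)\big|_{V(\delta)}$ — correctly delivers $E_n(\lambda)=E_1(\lambda)$ without any appeal to self-adjointness, and the choice of a basis transforming like the coordinate vectors then gives the angular form of $\xi_n$ and $\eta_n$ componentwise. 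Two small points are worth making explicit. First, the $\delta=0$ spectral subspace associated with $i(e_1-e_0)$ of $L(-e_0,0)$ is spanned by the two-vectors $(\xi_n^{lin},\,i\xi_n^{lin})^T$, not by the scalars $\xi_n^{lin}$; but since $SO(3)$ acts diagonally on the two components, this is still an irreducible $\ell=1$ module, so the argument is unaffected. Second, the assertion that the basis can be taken with $\xi_n,\eta_n$ real deserves one more sentence: because $L_\pm(\lambda)$ are real operators, the equations $L_-\eta_n=E\xi_n$, $L_+\xi_n=E\eta_n$ are real, so the real points of the complex $\ell=1$ eigenspace form a real $\ell=1$ module, and a basis transforming like $\{e_n\}$ can be chosen inside it; this is consistent with (and sharpens) the reality already asserted in Lemma~\ref{LM:NearLinear}. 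Finally, note that the symplectic normalization $\langle\xi_m,\eta_n\rangle=\delta_{mn}$ in \eqref{eq:Orthogonality} is automatically diagonal in your basis by parity, and can be achieved by a single scalar rescaling of $\xi,\eta$.
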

\bigskip

In this paper we make the following assumptions on the spectrum of the operator
$L(\lambda):$
\begin{enumerate}
 \item[{\bf (SA)}] The linearized operator $L(\lambda)$ has discrete spectrum given by:
 \subitem - an eigenvalue $0$
    with
generalized eigenspace spanned by
$\left\{\ \left(
\begin{array}{lll}
0\\
\phi^{\lambda}
\end{array}
\right)\ ,\ \left(
\begin{array}{lll}
\partial_{\lambda}\phi^{\lambda}\\
0
\end{array}
\right)\ \right\}$
\subitem - {\it neutral eigenvalues} $\pm iE(\lambda),\ E(\lambda)>0$,\\  satisfying the condition $2E(\lambda)>\lambda$ and with corresponding eigenvectors
$\left|
\begin{array}{lll}
\xi_{n}\\
\pm i\eta_{n}
\end{array}
\right\rangle,\ n=1,\dots,N $.
\end{enumerate}
For the non self-adjoint operator $L(\lambda)$ the (Riesz)
projection onto the discrete spectrum subspace of $L(\lambda)$,
$P_{d}=P_{d}(L(\lambda))=P_{d}^\lambda$, is given explicitly in ~\cite{GaWe}, Proposition 5.6, p. 280:
\begin{equation}\label{eq:PdProjection}
\begin{array}{lll}
P_{d}&\equiv&\frac{2}{\partial_{\lambda}\| \phi^{\lambda}\|^{2}}\left(\ \left|
\begin{array}{lll}
0\\
\phi^{\lambda}
\end{array}
\right\rangle \left\langle
\begin{array}{lll}
0\\
\partial_{\lambda}\phi^{\lambda}
\end{array}
\right|\ +\ \left|
\begin{array}{lll}
\partial_{\lambda}\phi^{\lambda}\\
0
\end{array}
\right\rangle \left\langle
\begin{array}{lll}
\phi^{\lambda}\\
0
\end{array}
\right|\ \right)\\
& &\\
& &+\frac{1}{2}i\displaystyle\sum_{n=1}^{N}\left(\ \left|
\begin{array}{lll}
\xi_{n}\\
i\eta_{n}
\end{array}
\right\rangle\left\langle
\begin{array}{lll}
-i\eta_{n}\\
\xi_{n}
\end{array}
\right| \ -\  \left|
\begin{array}{lll}
\xi_{n}\\
-i\eta_{n}
\end{array}
\right\rangle\left\langle
\begin{array}{lll}
i\eta_{n}\\
\xi_{n}
\end{array}
\right|\ \right)\ .
\end{array}
\end{equation}
and the projection onto the essential spectrum by $P_{c}\ \equiv\ 1-P_{d}.$
\medskip

The large time analysis of NLS/GP requires good decay estimates on the linearized evolution operator, $e^{L(\lambda)t}P^\lambda_c$. An obstruction to such estimates are, so-called, threshold resonances
 (see \cite{GaWe} and references therein), which we preclude with the following hypothesis.
\begin{enumerate}
 \item[({\bf Thresh$_\lambda$)}] Assume $L(\lambda)$ has no resonances at $\pm i\lambda$
\end{enumerate}
For small solitons, $\delta$ sufficiently small, ({\bf Thresh$_\lambda$}) follows from the absence of a zero energy resonance for  $-\Delta+V$.

\subsection{ Second order (nonlinear) Fermi Golden Rule}\label{sec:fgr}
In this subsection we review the definitions and constructions presented in detail in ~\cite{GaWe} pp. 281-282. The amplitudes and phases of the neutral modes are governed by the complex-valued vector parameter $z: \ \mathbb{R}^{+}\rightarrow \mathbb{C}^{N}$, first arising in the  linear approximation of solution $\psi$, see {\it e.g.} ~\eqref{eq:LinApp}. Its precise definition is seen in the decomposition of the solution $\psi$ in ~\eqref{Decom}, under the condition ~\eqref{s-Rorthogonal}, below, from which it follows that
\begin{equation}
\partial_{t}z =-iE(\lambda) z -\Gamma( z ,\bar{ z }) z +\Lambda( z ,\bar{ z })
z\ +\ \cO\left((1+t)^{-\frac{3}{2}-\delta}\right),\ \ \delta>0\label{new-nf1}
\end{equation}
where $\pm iE(\lambda)$ are complex conjugate $N-fold$ degenerate
neutral eigenfrequencies of $L(\lambda)$, $\Gamma$ is non-negative symmetric and $\Lambda$ is skew symmetric.

In what follows we define the non-negative, Fermi Golden Rule matrix,  $\Gamma$.
Define vector functions $G_{k},\ k=1,2,\cdot\cdot\cdot, N$, as
\begin{equation}\label{eq:Fk2}
G_{k}(z,x):=\left(
\begin{array}{lll}
B(k)\\
D(k)
\end{array}
\right)
\end{equation} with the functions $B(k)$ and $D(k)$ defined as $$\begin{array}{lll}
B(k)&:=&-i\sigma(\phi^{\lambda})^{2\sigma-1} \ \ \left[\ \rhoo\ \eta_{k}+\omegaa\ \xi_{k}\ \right]\ , \\
D(k)&:=&-\sigma(\phi^{\lambda})^{2\sigma-1}
\left[\ 3\rhoo\xi_{k}-\omegaa\eta_{k}\ \right]\
 -\ 2\sigma(\sigma-1)(\phi^{\lambda})^{2\sigma-1}\ \rhoo\xi_{k}\ ,
\end{array}$$
where
$$
z\cdot\xi\ :=\ \displaystyle\sum_{n=1}^{N}z_{n}\xi_{n},\ z\cdot\eta\  :=\ \displaystyle\sum_{n=1}^{N}z_{n}\eta_{n}.
  $$
In terms of the column 2-vector, $G_{k}$, we define
a $N \times N$ matrix $Z(z,\bar{z})$ as
\begin{equation}\label{eq:zMatrix}
Z(z,\bar{z})=(Z^{(k,l)}(z,\bar{z})),\ \ 1\le k,l\le N
\end{equation} and
\begin{equation}
Z^{(k,l)}\ =\ -\left\langle
(L(\lambda)+2iE(\lambda)-0)^{-1}P_{c}G_{l}, iJP_cG_{k}\right\rangle
\label{Zkl-sym}
\end{equation}
Finally, we define $\Gamma(z,\bar{z})$ as follows:
\begin{equation}
\Gamma(z,\bar{z})\ :=\ \frac{1}{2}[Z(z,\bar{z})+Z^{*}(z,\bar{z})].
\label{Gammadef}
\end{equation}
Thus,
 \begin{equation}
[\ \Gamma(z,\bar{z})\ ]_{kl}\ =\ -\ \Re\ \left\langle
(L(\lambda)+2iE(\lambda)-0)^{-1}P_{c}G_{l}, iJP_cG_{k}\right\rangle.
\label{Gamma-kl}
\end{equation}
By ~\eqref{new-nf1} and ~\eqref{Gamma-kl} we find
\begin{equation}
\partial_{t}\ |z(t)|^2\ =\ -2\ z^*\ \Gamma(z,\bar{z})\ z\ +\ \dots.
\label{energy-id}\end{equation}

In \GW $\Gamma$ was shown to be non-negative and we require it to be positive definite.
 In particular, we shall require the following Fermi Golden Rule hypothesis.\\ \\ Let $P_{c}^{lin}$ denote the spectral projection onto the essential spectrum of $-\Delta+V$. Then
\begin{enumerate}
\item[{\bf (FGR)}] We assume there exists a constant $C>0$ such that
$$-\Re\langle i[-\Delta+V+\lambda-2E(\lambda)-i0]^{-1}P_{c}^{lin}(\phi_{lin})^{2\sigma-1}(z\cdot \xi^{lin})^{2},(\phi_{lin})^{2\sigma-1}(z\cdot \xi^{lin})^{2}\rangle\ge C|z|^4$$ for any $z\in \mathbb{C}^{N}.$
\end{enumerate} The assumption FGR implies that there exists a constant $C_1>0$ such that for any $z\in \mathbb{C}^{N}$
\begin{equation}\label{eq:FGR}
z^*\ \Gamma(z,\bar{z})\ z\geq C_1\|\phi^{\lambda}\|_{\infty}^{4\sigma-2} |z|^{4}.
\end{equation}

Note that for each fixed $z$, smallness of $|\lambda+e_0|$ together with ~\eqref{eqn:perturb} and ~\eqref{eq:GoToNear} imply that the leading term in $z^*\ \Gamma(z,\bar{z})\ z$ is
\begin{equation}\label{eq:Gamma0}
\begin{array}{lll}
& &z^* \Gamma_0(z,\bar{z})\ z\\
&\equiv &-
2\sigma^{2}(\sigma+1)^{2}\delta^{4\sigma-2}(\lambda)\ \times \ \\
& &\Re\langle i[-\Delta+V+\lambda-2E(\lambda)-i0]^{-1}P_{c}^{lin}(\phi_{lin})^{2\sigma-1}(z\cdot\xi^{lin})^{2},
(\phi_{lin})^{2\sigma-1}(z\cdot\xi^{lin})^{2}\rangle.
\end{array}
\end{equation}

\section{Main Theorem}\label{SEC:MainTheorem}
In this section we state our main results, Theorems \ref{THM:MassTransfer} and \ref{THM:MainTheorem2}.
\begin{theorem}\label{THM:MassTransfer}
Assume a cubic nonlinearity, $\sigma=1$, in  ~\eqref{eq:NLS}.  If the spectral conditions {\bf (SA) (Thres$_\lambda$) and (FGR)}  are satisfied,
then there exists a constant $\delta$ such that if the initial condition $\psi_{0}$ satisfies the condition
$$\psi_{0}(x)=e^{i\gamma_{0}}[\phi^{\lambda_{0}}+\alpha_{0}\cdot\xi+i\beta_{0}\cdot \eta +R_{0}]$$
for some real constants $\gamma_{0},\ \lambda_{0}$, real $N$ vectors $\alpha_{0}$ and $\beta_{0}$, function $R_0:\mathbb{R}^{3}\rightarrow \mathbb{C}$, such that for $\epsilon\le\delta$:\\
 $|\lambda_{0}-|e_{0}||\leq \epsilon,\ \ |\alpha_{0}|+|\beta_{0}|\ \lesssim \epsilon \|\phi^{\lambda_{0}}\|_{2}$,
  $\|\langle x\rangle^{4}R_{0}\|_{H^2}\lesssim |\alpha_{0}|^{2}+|\beta_{0}|^{2}=\mathcal{O}(\epsilon^2)$, for  $\epsilon\leq \delta$,  then there exist smooth functions
  \begin{align}
  &\lambda(t):\mathbb{R}^{+}\rightarrow
\mathcal{I},\ \ \ \gamma(t): \mathbb{R}^{+}\rightarrow
\mathbb{R},\  z(t):\mathbb{R}^{+}\rightarrow \mathbb{C}^{N},\nn\\
 &\ \  R(x,t):\mathbb{R}^{3}\times\mathbb{R}^{+}\rightarrow
\mathbb{C}
\nn\end{align}
 such that the solution of NLS evolves in the form:
\begin{align}\label{Decom}
\psi(x,t)&\ =\ e^{i\int_{0}^{t}\lambda(s)ds}e^{i\gamma(t)}\nn\\
&\ \ \ \ \ \ \ \times[\ \ \phi^{\lambda}+a_{1}(z,\bar{z})
\D_\lambda\phi^{\lambda}+ia_{2}(z,\bar{z})\phi^{\lambda}\ +\ (Re\ \tilde{z})\cdot\xi\ +\ i(Im \tilde{z})\cdot\eta\ +\ R\ \ ],\end{align}
where  $\lim_{t\rightarrow \infty}\lambda(t)=\lambda_{\infty},$
for some $\lambda_{\infty}\in \mathcal{I}$.\\
Here, $a_{1}(z,\bar{z}),\ a_{2}(z,\bar{z}): \mathbb{C}^{N}\times\mathbb{C}^{N}\rightarrow \mathbb{R}$ and $\tilde{z}-z: \mathbb{C}^{N}\times\mathbb{C}^{N}\rightarrow \mathbb{C}^{N}$
 are some polynomials of $z$ and $\bar{z}$, beginning with terms of order $|z|^{2}$.

\begin{enumerate}
\item[(A)]  The dynamics of mass/energy transfer is captured by the following reduced dynamical system for the  key modulating parameters, $\lambda(t)$ and $z(t)$:
\begin{equation}\label{eq:IncreaseLambda}
\frac{d}{dt}\ \|\phi^{\lambda(t)}\|_2^2 =z^* \Gamma_0(z,\bar{z})\ z +\ \cS_\lambda(t),
\end{equation}
\begin{equation}\label{eq:DecayZ}
\frac{d}{dt}\ |z(t)|^2  = -2z^* \Gamma_0(z,\bar{z})\ z+\cS_{z}(t),
\end{equation}
where $z^*\Gamma_0(z,\bar{z})z$ is given in ~\eqref{eq:Gamma0}, and
\begin{equation}
S_\lambda(t)\lesssim\ (1+t)^{-\frac{19}{10}},\ \ {\rm and}\ \ S_z(t)\ \lesssim\ (1+t)^{-\frac{12}{5}}.\label{new-lam-z}
\end{equation}
Furthermore,
\begin{equation}
\int_0^\infty\ S_\lambda(\tau)d\tau,\  \ \ \ \int_{0}^{\infty} \cS_z(\tau)\ d\tau\ =o(|z_0|)^2.
\label{SL1}\end{equation}
 \item[(B)] $\vec{R}(t)=( Re\ R(t)\ ,\ Im\ R(t))^T$ lies in the essential spectral part of $L(\lambda(t))$. Equivalently, $R(\cdot,t)$ satisfies the symplectic orthogonality conditions:
 \begin{align}\label{s-Rorthogonal}
&\omega\langle R,i\phi^{\lambda}\rangle\ =\ \omega\langle
R,\partial_{\lambda}\phi^{\lambda}\rangle\ =\ 0, \nn\\
&\omega\langle
R,i\eta_{n}\rangle=\omega\langle R,\xi_{n}\rangle=0,\
n=1,2,\cdot\cdot\cdot, N,
\end{align}
 where $\omega\langle X,Y\rangle:=Im\int X\overline{Y}$.
 \item[(C)] {\bf Decay estimates:} For any time $t\geq 0$
 \begin{align}
&\|(1+x^{2})^{-\nu}\vec{R}(t)\|_{2}\leq C(\|\langle x\rangle^{4}\psi_0\|_{H^2})(1+t)^{-1},
\label{Rdecay}\\
&\|\vec{R}(t)\|_{{H}^2}\leq \epsilon_{\infty},
\label{eq:stability}\\
&|z(t)|\leq C(\|\langle x\rangle^{4}\psi_0\|_{H^2})(1+t)^{  -\frac{1}{2}  }.
\label{z-decay}
\end{align} 
\item[(D)] {\bf Mass / Energy equipartition:}\ Half of the mass of the neutral modes contributes to forming a more massive asymptotic ground state and half is radiated away
\begin{equation}\label{eq:Mass}
\|\phi^{\lambda_{\infty}}\|_{2}^{2}=
\|\phi^{\lambda_{0}}\|_{2}^{2}+\frac{1}{2}\left[\ |\alpha_{0}|^{2}+|\beta_{0}|^{2}\ \right]
+o\left(\ \left[|\alpha_{0}|^{2}+|\beta_{0}|^{2}\right]\ \right).
\end{equation}
\end{enumerate}
\end{theorem}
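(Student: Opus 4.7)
The plan is to construct the decomposition \eqref{Decom}, enforce the symplectic orthogonality \eqref{s-Rorthogonal}, derive modulation equations for $(\lambda,\gamma,z,R)$, and then combine \eqref{eq:IncreaseLambda} and \eqref{eq:DecayZ} to extract the equipartition law via a near-conservation argument. The starting point is the ansatz
\[
\psi=e^{i\int_0^t\lambda(s)\,ds+i\gamma(t)}\bigl[\phi^\lambda+a_1(z,\bar z)\partial_\lambda\phi^\lambda+ia_2(z,\bar z)\phi^\lambda+(\Re\tilde z)\cdot\xi+i(\Im\tilde z)\cdot\eta+R\bigr],
\]
where the polynomials $a_1,a_2$ and the shift $\tilde z-z$ are chosen (following \cite{GaWe}) as a normal-form / Poincaré change of variables that removes the non-resonant quadratic in $z$ monomials produced by the nonlinearity. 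The modulation parameters $\lambda(t),\gamma(t),z(t)$ are then uniquely determined, locally in time, by the four real symplectic-orthogonality conditions in \eqref{s-Rorthogonal}, which project $R$ onto $\mathrm{Range}(P_c(L(\lambda)))$.

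Next I would substitute the ansatz into NLS/GP and project onto each eigenspace of $L(\lambda)$ to obtain coupled ODEs for $\lambda,\gamma,z$ and a dispersive PDE for $R$. Differentiating the orthogonality conditions and using Proposition \ref{bif-of-gs} and Lemma \ref{LM:NearLinear}, one recovers the normal-form evolution \eqref{new-nf1} together with a $\lambda$-equation of the form
\[
\tfrac{d}{dt}\|\phi^{\lambda(t)}\|_2^2=z^*\Gamma_0(z,\bar z)z+\mathcal S_\lambda(t),
\]
after absorbing certain oscillatory terms into $\mathcal S_\lambda$. Here the key algebraic step is that the matrix $\Gamma_0$ arising in the $\lambda$-equation is precisely (twice) the symmetric part of $Z$ that appears in \eqref{Gammadef}; this identification is forced by the Hamiltonian structure of NLS/GP and is the origin of the factor $2$ that ultimately yields equipartition. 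I would verify this by computing, for each quadratic monomial $z^a\bar z^b$ with $|a|+|b|=2$ that couples $(\phi^\lambda$ plane$)$ to the continuous spectrum, the matching pair of contributions to $\tfrac{d}{dt}\|\phi^\lambda\|_2^2$ and to $\tfrac{d}{dt}|z|^2$, using the distributional identity $(L+2iE-i0)^{-1}=\mathrm{P.V.}(L+2iE)^{-1}+i\pi\delta(L+2iE)$.

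The main obstacle is controlling the source terms $\mathcal S_\lambda,\mathcal S_z$: I have to prove not only the pointwise bounds in \eqref{new-lam-z} but also the integrability statement \eqref{SL1}, $\int_0^\infty\mathcal S_\lambda(\tau)\,d\tau=o(|z_0|^2)$. These source terms contain three types of contributions: (i) genuinely cubic and higher terms in $z$, which by $|z(t)|\lesssim(1+t)^{-1/2}$ from \eqref{z-decay} decay as $(1+t)^{-3/2}$ and better; (ii) cross terms of the form $z^a\bar z^b R$ and $R^2$, controlled using the local-decay estimate \eqref{Rdecay}, $\|\langle x\rangle^{-\nu}\vec R\|_2\lesssim(1+t)^{-1}$; and (iii) oscillatory remainders in $e^{\pm i\int\lambda\, ds}$ which gain an extra power of $t^{-1}$ after integration by parts in time. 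Combining these three gains with weighted dispersive estimates for $e^{L(\lambda(t))t}P_c$ (which requires freezing-of-coefficients arguments together with hypothesis $(\mathbf{Thresh}_\lambda)$) should yield the stated $(1+t)^{-19/10}$ and $(1+t)^{-12/5}$ bounds, both of which are $L^1_t$; the $o(|z_0|^2)$ refinement follows because each such term carries a prefactor of at least $|z|^2$ or $\|\langle x\rangle^{-\nu}\vec R\|_2^2$, both initially $\mathcal O(|z_0|^2)$, together with integrable time decay.

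Granting parts (A), (B), (C), the final equipartition statement (D) is then immediate. Forming $2\cdot\eqref{eq:IncreaseLambda}+\eqref{eq:DecayZ}$ cancels the leading FGR flux exactly:
\[
\tfrac{d}{dt}\bigl(2\|\phi^{\lambda(t)}\|_2^2+|z(t)|^2\bigr)=2\mathcal S_\lambda(t)+\mathcal S_z(t).
\]
Integrating from $0$ to $\infty$, invoking $|z(t)|\to 0$ from \eqref{z-decay} and $\lambda(t)\to\lambda_\infty$, and applying \eqref{SL1} gives
\[
2\|\phi^{\lambda_\infty}\|_2^2-2\|\phi^{\lambda_0}\|_2^2-|z(0)|^2=o(|z_0|^2).
\]
Finally, the initial value $|z(0)|^2$ equals $|\alpha_0|^2+|\beta_0|^2$ up to $\mathcal O(\epsilon^3)=o(|z_0|^2)$, since the normal-form correction $\tilde z-z$ and the projection of $R_0$ onto the neutral modes each contribute at most quadratically in $\epsilon$ beyond the leading term. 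Dividing by $2$ yields \eqref{eq:Mass}.
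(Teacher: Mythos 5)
Your combination $2\cdot$\eqref{eq:IncreaseLambda}$+$\eqref{eq:DecayZ} at the end is exactly the paper's proof of Statement (D), and your identification of the symplectic orthogonality conditions and modulation system matches Appendices A--B of the paper. However, there is a genuine gap in the part that carries the whole weight of the theorem, namely \eqref{SL1}: showing $\int_0^\infty \cS_\lambda$, $\int_0^\infty \cS_z = o(|z_0|^2)$, which the paper establishes as Theorems~\ref{THM:Zequation} and~\ref{THM:KeyTerm}.

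Your argument that ``each such term carries a prefactor of at least $|z|^2$ or $\|\langle x\rangle^{-\nu}\vec R\|_2^2$, both initially $\cO(|z_0|^2)$, together with integrable time decay'' does not deliver $o(|z_0|^2)$. The correct decay rate is $|z(t)|\lesssim(|z_0|^{-2}+\delta_\infty^2 t)^{-1/2}$, so a generic fourth-order term $\sim|z|^4$ integrates to $|z_0|^2/\delta_\infty^2\gg|z_0|^2$ --- this is precisely the size of the leading FGR contribution $\int z^*\Gamma_0 z\,dt$, which is of order $|z_0|^2$ after multiplying by $\delta_\infty^{4\sigma-2}$. Thus \emph{every} fourth-order non-FGR term must either (i) carry a prefactor strictly better than $\delta_\infty^{4\sigma-2}$ (the paper proves the $(2,2)$ resonant piece satisfies $2\Pi_{2,2}+z^*\Gamma_0 z=\cO(\delta_\infty^{4\sigma-1}|z|^4)$, an exact leading-order cancellation established in Appendix~\ref{sec:compare} by diagonalizing $J$ with $U=\frac{1}{\sqrt2}\left(\begin{smallmatrix}1&i\\ i&1\end{smallmatrix}\right)$ and observing $\tilde M_{2,2}=0$ because it is the real part of a real number), or (ii) be of type $\Pi_{m,n}$ with $m\ne n$, for which the paper integrates by parts in $t$ exploiting the oscillation $e^{iE(\lambda)(m-n)t}$ (Appendix~\ref{sec:periodic}), a mechanism you mention only in passing for the $e^{\pm i\int\lambda}$ phases. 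Your invocation of the ``Hamiltonian structure'' as the source of the factor of $2$ is also imprecise: $\Gamma_0$ is the \emph{same} matrix appearing with coefficient $+1$ in \eqref{eq:IncreaseLambda} and $-2$ in \eqref{eq:DecayZ}; it is the ratio $1:(-2)$ of the two coefficients, not a doubling of $\Gamma_0$ itself, that makes $2\lambda+z$ the correct conserved combination.

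In addition, $\int_0^\infty \cS_z\,dt=o(|z_0|^2)$ requires the refined pointwise bound on $\mathcal K$ in Lemma~\ref{LM:Junkz} together with the improved estimate $\|\langle x\rangle^{-4}R_{\geq 4}\|_2\lesssim|z_0|^2(1+t)^{-3/2}+\delta_\infty|z_0|^2|z|^2$ of Proposition~\ref{prop:useful} (expanding $\vec R$ to \emph{third}, not second, order in $z,\bar z$); the crude local-decay bound $\|\langle x\rangle^{-\nu}\vec R\|_2\lesssim(1+t)^{-1}$ you cite is not sharp enough. All of these steps also use the hypothesis $|z_0|\ll\delta_\infty$ (encoded in the assumption $|\alpha_0|+|\beta_0|\lesssim\epsilon\|\phi^{\lambda_0}\|_2$), which your proposal never actually invokes, yet without it $\int \delta_\infty|z|^5\,dt\sim\delta_\infty^{-1}|z_0|^3$ fails to be $o(|z_0|^2)$.
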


The following result applies to the case where $\sigma>1$.
\begin{theorem}\label{THM:MainTheorem2}
Assume the general nonlinearity $\sigma >1$ . Then  statements (A)-(D) of Theorem \ref{THM:MassTransfer} hold  provided, in addition to the assumptions of Theorem \ref{THM:MassTransfer}, we assume:
 \begin{itemize}
\item[(1)] in the case where the neutral modes are degenerate ($N>1$), the potential $V$ is spherically symmetric and the eigenvectors $\xi^{lin}_{n},\ n=1,2,\cdots, N=d,$ admit the form $\xi_{n}^{lin}=\frac{x_{n}}{|x|}\xi(|x|)$ for some function $\xi.$
\item[(2)] $|\alpha_{0}|^{2}+|\beta_{0}|^{2}\leq [\|\phi^{\lambda_{0}}\|_{2}]^{C(\sigma)}$ for some sufficiently large constant $C(\sigma).$
 \end{itemize}
\end{theorem}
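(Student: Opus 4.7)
My strategy is to rerun the multiscale modulation / normal form argument of Theorem~\ref{THM:MassTransfer}, carefully tracking the dependence on the small parameter $\delta=\delta(\lambda_0)$ from \eqref{eqn:perturb} together with $\zeta^2:=|\alpha_0|^2+|\beta_0|^2$. First, substitute the ansatz \eqref{Decom} into \eqref{eq:NLS}, expand $|\psi|^{2\sigma}\psi$ about $\phi^\lambda$ to the order needed to capture the quadratic-in-$z$ resonance, and determine the near-identity polynomials $a_1,a_2,\tilde z-z$ so as to eliminate the non-resonant quadratic terms. The construction is formally identical to that in \GW, with coefficients depending on $\sigma$ through the Taylor coefficients of $(\phi^\lambda)^{2\sigma-k}$. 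Imposing the symplectic orthogonality \eqref{s-Rorthogonal} then yields modulation equations of the same structural form as \eqref{eq:IncreaseLambda}--\eqref{eq:DecayZ}, with leading dissipative term $-2z^*\Gamma(z,\bar{z})z$ and $\Gamma$ given by \eqref{Gamma-kl}. By \eqref{eqn:perturb}--\eqref{eq:GoToNear}, $\Gamma=\delta^{4\sigma-2}\Gamma_0+\cO(\delta^{4\sigma-1})$ with $\Gamma_0>0$ by \textbf{(FGR)}.

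Next, I would close the bootstrap for $\vec R$, $\lambda(t)$, and $z(t)$. The dispersive and weighted local-decay estimates for $e^{L(\lambda)t}P_c$ proved in \GW~apply uniformly in $\sigma\ge 1$, so the analytic scaffolding is unchanged. What genuinely depends on $\sigma$ is the ratio of the Fermi Golden Rule rate $\delta^{4\sigma-2}|z|^4$ to the higher-order nonlinear remainders, which schematically contribute terms of the form $\delta^{2\sigma-k}|z|^{k+2}$ or $\delta^{2\sigma-k}|z|^k\|\vec R\|$ for $k\ge 1$. A generic such error is smaller than the FGR dissipation by at most a factor of order $|z|^2\delta^{-(2\sigma+1)}$, which must remain uniformly small along the whole evolution. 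Since $|z(t)|$ is essentially monotonically non-increasing, this reduces to an initial smallness $\zeta^2\ll\delta^{C(\sigma)}$ for sufficiently large $C(\sigma)$, which is precisely assumption~(2). With this in hand the bootstrap closes, producing \eqref{Rdecay}--\eqref{z-decay}; the bounds \eqref{new-lam-z}--\eqref{SL1} then follow by inserting these decay rates into the expressions defining $\cS_\lambda,\cS_z$.

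Assumption~(1) is needed only in the degenerate case $N>1$, to keep the reduced $z$--dynamics algebraically tractable. Under spherical symmetry of $V$ together with $\xi^{lin}_n=(x_n/|x|)\xi^{lin}(|x|)$, Lemma~\ref{mainLem2} produces a basis in which $E_n\equiv E_1$ and $\xi_n=(x_n/|x|)\xi(|x|)$, $\eta_n=(x_n/|x|)\eta(|x|)$, making the entire quadratic resonance equivariant under $SO(3)$. A direct evaluation of the resolvent integrals \eqref{Zkl-sym} then shows that $\Gamma_0$ acts as a scalar on $\mathbb{C}^N$, so \eqref{eq:DecayZ} reduces to an essentially scalar equation for $|z|^2$. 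Without this equivariance, $\Gamma_0$ would in general carry a nontrivial $z$--dependence in the degenerate case and the slow FGR damping would couple with mode mixing, obstructing the clean integration that underlies (D).

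Given (A)--(C), part (D) follows by adding \eqref{eq:IncreaseLambda} to one-half of \eqref{eq:DecayZ} and integrating from $0$ to $\infty$: the source integral is $o(\zeta^2)$ by \eqref{SL1}, which yields \eqref{eq:Mass}. The main obstacle is therefore the quantitative one described in the second paragraph: for $\sigma>1$ the FGR dissipation is weaker by a factor $\delta^{4\sigma-4}$ than in the cubic case, so one must verify that it continues to dominate every higher-order nonlinear correction in the chosen weighted $H^2$--norms. Assumption~(2) is designed precisely to secure this domination and to close the bootstrap.
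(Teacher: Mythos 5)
Your high-level outline is consistent with the paper's strategy for assumption~(2): indeed, for $\sigma>1$ the quantitative issue is that the Fermi Golden Rule dissipation scales like $\delta^{4\sigma-2}|z|^4$ while higher-order corrections carry weaker powers of $\delta$, and the stronger smallness $|z_0|\leq\delta_\infty^{C(\sigma)}$ is imposed precisely so that the dissipation still dominates throughout the bootstrap (this is how Proposition~\ref{prop:useful} and Proposition~\ref{Prop:Majorants} are formulated and used).

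However, your explanation of the role of assumption~(1) is not the paper's, and I do not think it is correct as stated. You claim the spherical symmetry is needed to make $\Gamma_0$ ``act as a scalar on $\mathbb{C}^N$'' and to avoid mode mixing; but the positivity needed for the dissipative integration is already supplied by the (FGR) hypothesis, for arbitrary $z\in\mathbb{C}^N$ and without any diagonalization of $\Gamma_0$. In the paper, assumption~(1) enters at an entirely different, more technical point: the proof of the cancellation $2\Pi_{2,2}+z^*\Gamma_0(z,\bar z)z=\mathcal O(\delta_\infty^{4\sigma-1}|z|^4)$ (Lemma~\ref{LM:junk}, estimate~\eqref{eq:KeyTerm}). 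For $\sigma=1$, several of the $\Phi_n$ terms vanish by an algebraic ``disjointness'' of certain vectors after diagonalizing $J$; for $\sigma>1$ this structural cancellation fails, and one instead uses the vector-valued form $\xi_n=(x_n/|x|)\xi(|x|)$, $\eta_n=(x_n/|x|)\eta(|x|)$ (Lemma~\ref{mainLem2}) to show $\Omega_{\sigma>1}=0$ (since $\xi_m\eta_n-\xi_n\eta_m=0$) and to establish that certain resolvent pairings $D_1,D_2$ are real by a permutation-of-coordinates argument, so their imaginary parts drop out of $\Re\Phi_5$. That is the content of assumption~(1); your proposed justification would not by itself yield these cancellations.

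A second gap: you do not address the lack of smoothness of $|\psi|^{2\sigma}$ at $\psi=0$ when $\sigma\notin\mathbb{N}$. The formal Taylor expansion you invoke produces terms like $(\phi^\lambda)^{2\sigma-3}(\alpha\cdot\xi)^4$ with negative powers of $\phi^\lambda$, which are not integrable. The paper handles this via a regime splitting (Proposition~\ref{Prop:NonlinearRem2}): on the set where $(\phi^\lambda)^2\geq 4|\phi^\lambda(\alpha\cdot\xi)|+2|I|$ one expands and trades the negative powers of $\phi^\lambda$ against positive powers of the small perturbation, and elsewhere one bounds the nonlinearity crudely. Without this or some equivalent device, the expansion you rely on is not justified for non-integer $\sigma$, and several of the ``remainder'' estimates in step~(3) of your argument would fail to even be well-defined.
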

The statements (B) and (C) are obtained in ~\cite{GaWe}: all except ~\eqref{eq:stability} are taken from Theorem 7.1, p.  284. Equation ~\eqref{eq:stability} is from the proof (Line 18, p. 306) that $\mathcal{R}_{4}(T):=\displaystyle\max_{0\leq t\leq T} \|\vec{R}(t)\|_{{H}^2}\ll 1$;   $\mathcal{R}_{4}$ is defined  in (11.2) .

 The bounds on $S_\lambda(t)$ and $S_z(t)$, ~\eqref{new-lam-z}, of  statement (A) were proved in \cite{GaWe}; see equations (8-9) and (8-11) p. 286. For the estimate $|Remainder|\lesssim (1+t)^{-\frac{19}{10}}$, see line 9, p. 306. The remaining assertions in (A) will be reformulated as Theorems ~\ref{THM:Zequation} and ~\ref{THM:KeyTerm}, and proved in Section ~\ref{sec:ProveMainTHM}. Statement (D) is  proved just below.
\begin{remark}
{\bf Mass equipartition:}
It is straightforward to interpret  ~\eqref{eq:Mass} as implying  equipartition of the neutral mode mass. Indeed,  since  $\phi^{\lambda}$ is orthongal to $ \xi_{m}$ (see ~\eqref{eq:Orthogonality}) and since mass is conserved for NLS/GP, i.e. $\|\psi(t)\|_2=\|\psi_0\|_2$ , we have
\begin{equation}
\|\psi(\cdot,t)\|_{2}^{2}=\|\psi_0\|_{2}^{2}=\|\phi^{\lambda_{0}}\|_{2}^{2}+|\alpha_{0}|^{2}+|\beta_{0}|^{2}+o(|\alpha_{0}|^{2}+|\beta_{0}|^{2}),\ \ {\rm for\ all}\ t.\nn\end{equation}
The theorem implies that $\psi(t,\cdot)$ has a weak-$L^2$ limit, $\phi^{\lambda_\infty}$, whose mass is given by ~\eqref{eq:Mass}. Thus,  half of the mass of the neutral modes is transferred to the ground states while the other half is radiated to infinity.\\ \\
\end{remark}

\nit We now use Statement (A) of Theorem
\ref{THM:MassTransfer} to prove Statement (D).\\ \\
\nit{\bf Proof of Mass equipartition:} Twice the first plus the second equation in  (\ref{new-lam-z}) yields:
\begin{equation}
\frac{d}{dt}\left(\ 2\left\|\phi^{\lambda(t)}\right\|_2^2\ +\  |z(t)|^2\ \right)\ =
\ 2\cS_\lambda(t)\ + \cS_z(t).
\label{lincomen}\end{equation}
Integration of (\ref{lincomen}) with respect to $t$ from zero to infinity
and use of the decay of $z(t)$, (\ref{z-decay}), imply
\begin{equation}
2\left\|\phi^{\lambda(\infty)}\right\|_2^2\ =\ 2\left\|\phi^{\lambda(0)}\right\|_2^2\ +\  |z(0)|^2\ +\ \int_0^\infty\left(\ 2\cS_\lambda(t')\ + \cS_z(t')\ \right)\ dt'.
\nn\end{equation}
Dividing by two and estimating the integral, using (\ref{SL1}), completes the proof of Statement D.

\begin{remark}
{\bf Generic data in a neighborhood of the origin:}\\ For the  case of cubic nonlinearity, $\sigma=1$, the condition $|\alpha_{0}|^{2}+|\beta_{0}|^{2}\ll \|\phi^{\lambda_{0}}\|_{2}^{2}$ can be improved to a state about generic (low energy) initial conditions satisfying $$|\alpha_{0}|^{2}+|\beta_{0}|^{2}\approx \|\phi^{\lambda_{0}}\|_{2}^2.$$ We impose the stronger condition in the present paper to simplify the treatment and to apply directly the results in \GW\ \cite{GaWe}. We refer to ~\cite{SW:04,MR1992875}. See also our remarks ~\ref{remark:remark3} and ~\ref{Remark} below.\\ \\
{\bf The generality of the nonlinearity in ~\eqref{eq:NLS}} \\ Our results hold not only for focusing nonlinearity, i.e., $-|\psi|^{2\sigma}\psi$ in ~\eqref{eq:NLS}. In fact all of the results in the Theorems ~\ref{THM:MassTransfer} and ~\ref{THM:MainTheorem2} can be transferred to the general cases $g|\psi|^{2\sigma}\psi,\ g\in \mathbb{R}\backslash\{0\}$ without difficulty. We restrict to the present consideration in order not to clutter our arguments by discussing various constants.
\end{remark}

\subsection{Relation to previous work}
Theorems \ref{THM:MassTransfer} and \ref{THM:MainTheorem2} are derived from a  refinement of the analysis of \cite{GaWe} and a generalization to arbitrary nonlinearity parameter $\sigma\ge1$. In this subsection we explain this.

The overall plan for proofs of asymptotic stability can be broken into two parts, motivated by a view of the soliton as an interaction between discrete and continuum modes:
\begin{itemize}
\item[{\bf Part 1}]:\
a) We seek a natural decomposition of the solution into a component evolving along the manifold of solitons and a component which is dispersive.  However, since the linearization about the soliton may have neutral modes, non-decaying time periodic states, we incorporate these degrees of freedom among the discrete degrees of freedom in the Ansatz. The dispersive components of the evolution lie in the subspace bi-orthogonal, in fact symplectic-orthogonal, to the discrete modes. The result is a {\it strongly-coupled} system  governing the discrete degrees of freedom and dispersive dispersive wave field, $R(t)$. Mathematically we decomposed the solution $\psi$ as in ~\eqref{eq:decom}, and by the orthogonal conditions ~\eqref{eq:Orthogonality} and ~\eqref{s-Rorthogonal} we derive equations for $\dot\lambda$, $\dot\gamma$, $z$ and $\vec{R}$. These are taken  from ~\cite{GaWe} and displayed in Appendix ~\ref{sec:decom}.\\ \\
b) We solve explicitly for the leading order components of $R(t)$, which arise due to resonant forcing by new, nonlinearity-generated, discrete mode frequencies. To achieve this we find the leading order, that is second order in $z$ and $\bar{z}$ contributions to $R(x,t)$. This is presented in Appendix ~\eqref{eq:Rform}. \\ \\
c) This leading order behavior is substituted into the equations governing the discrete modes, leading to a (to leading order) closed equation for the discrete modes, implying estimates for $\dot\lambda$ and $\dot\gamma$. This is Proposition ~\ref{Prop:Majorants}.\\ \\
d) The latter is put into a normal form, via a finite sequence of near-identity changes of variables, in which the energy transfer mechanisms are made explicit. This is achieved via the introduction of  $z\mapsto a_1(z,\bar{z}),\
a_2(z,\bar{z}),\ p(z,\bar{z})$ and $q(z,\bar{z})$ in Appendix ~\ref{sec:NormalFormExp}.
\item[{\bf Part 2}]:
The full coupled system is now in a form of:\\
a finite dimensional  system of (normal form) ODEs, with non-resonant terms removed by near identity changes of variables, with rapidly time- decaying corrections, determined by the dispersive part, {\it weakly-coupled} to a  dispersive PDE, with rapidly decaying and/or oscillating source terms, coming from the discrete components of the solution. The latter is  essentially treatable by low-energy scattering methods.

In \GW\ ~\cite{GaWe} we proved that the neutral mode mass and $\lambda(t)$, which through $\|\phi^{\lambda(t)}\|_2^2$ controls the ground state mass, is governed by
\begin{equation}\label{eq:crudeLamb}
\frac{d}{dt} \lambda(t) =\ {\rm Rem}_\lambda(t),
\end{equation}
\begin{equation}\label{old-lam-z2}
\frac{d}{dt} |z(t)|^2 = -  2z^*\Gamma( z ,\bar{ z })z + {\rm Rem}_z(t),
\end{equation}
where $\Rem_\lambda(t)$ and $\Rem_z(t)$ satisfy an estimate of the form:
\begin{equation}\label{Rem-ests}
\begin{array}{lll}
|\ \Rem_{\lambda}(t) |\ &\lesssim& |z(t)|^4+\ \|\langle x\rangle^{-4}R(t)\|_{H^2}^2+\ \|R(t)\|_\infty^2+ |z(t)|\ \|\langle x\rangle^{-4}\tilde{R}(t)\|_{2}\\
& &\\
|\ \Rem_{z}(t)\ | &\lesssim& |z(t)|^5+|z(t)|\ \|\langle x\rangle^{-4}R(t)\|_{H^2}^2+|z(t)|\ \|R(t)\|_\infty^2+ |z(t)|^2\ \|\langle x\rangle^{-4}\tilde{R}(t)\|_{2}
\end{array}
\end{equation}
where, $\tilde{R}$ is defined in ~\eqref{eq:difTildeR}, and for
$t\gg1$ we have 
\begin{equation}
|z(t)|\sim t^{-\frac{1}{2}},\  \|\langle x\rangle^{-4}R(t)\|_{H^2}\sim t^{-1},\ \|R(t)\|_\infty\sim t^{-1},\ \|\langle x\rangle^{-4}\tilde{R}(t)\|_{2}\sim t^{-\frac{7}{5}}.
\nn\end{equation}
\end{itemize}
Since $\Rem_z(t)=\cO(t^{-2-\tau}),\ \tau>0$, $\Rem_z(t)$ is dominated by
 the  first term on the right hand side of \eqref{old-lam-z2}, which is $\cO(t^{-2})$ and strictly negative, by the Fermi Golden Rule resonance hypothesis {\bf (FGR)}.
 Furthermore, $\Rem_\lambda(t)$  is integrable in $t$, $\lambda(t)$ has a limit, $\lambda_\infty$.\\ \\
\section{Refinements of the analysis and outline of the proof}\label{sec:ProveMainTHM}

In view of the results of \GW, we focus on the refinements required. These concern
 the terms $\mathcal{S}_{\lambda}$ and $\mathcal{S}_{z}$ in ~\eqref{eq:IncreaseLambda} and ~\eqref{eq:DecayZ} and their estimation in ~\eqref{SL1}, for the proofs of Theorems ~\ref{THM:MassTransfer} and ~\ref{THM:MainTheorem2}. In this section we derive $\mathcal{S}_{\lambda}$ and $\mathcal{S}_{z}$ and estimate them.

Technically the main effort in the present paper is to improve the estimates for the various terms on the right hand side of ~\eqref{eq:crudeLamb} and ~\eqref{old-lam-z2}. It is relatively easier to improve the estimate for $\partial_{t}|z|^2$, since the term $-2z^* \Gamma(z,\bar{z})z$ already measures the decreasing of $|z|^2$. What is left is to prove the term $Rem_{z}$ is indeed a small correction in certain sense.

To improve the estimates of the terms on the right hand side of ~\eqref{eq:crudeLamb} is more involved. From ~\eqref{eq:crudeLamb} we can not tell the increasing or decreasing of the parameter $\lambda.$ For that purpose we expand the right hand side of ~\eqref{eq:crudeLamb} to {\it fourth order} in $z$ and $\bar{z}$ to find some sign. This in turn is achieved by expansion of the function $R$ or $\tilde{R}$ further to third order in $z$ and $\bar{z}$. For that purpose we define the third order terms in ~\eqref{eq:difRgeq3} and introduce remainder by $R_{\geq 4}$ in ~\eqref{dif:R4}.

We next present some  precise estimates on $R_{\geq 4},\ z$ and $\dot\lambda$, which are defined in Appendices
~\ref{sec:decom}-~\ref{sec:NormalFormExp}.
To facilitate later discussions we define the constant $\delta_{\infty}$ by:
\begin{equation}\label{eq:defDelta}
\delta_{\infty}:=\|\phi^{\lambda_{\infty}}\|_{L^2}
=\mathcal{O}(|\lambda_{\infty}+e_0|^{1-\frac{1}{2\sigma}})=\mathcal{O}(\delta(\lambda(t)))\ \text{for any time}\ t
\end{equation}
where the last estimate follows from the fact the soliton manifold is stable (see \cite{GaWe}). Recall the constant $\delta(\lambda)\equiv \delta$ defined and estimated in ~\eqref{eqn:perturb}, and recall $\displaystyle\lim_{t\rightarrow\infty}\lambda(t)= \lambda_{\infty}$ in Theorem ~\ref{THM:MassTransfer}.
 We have:
\begin{proposition}\label{prop:useful}
Suppose that $\frac{|z_{0}|}{\delta_{\infty}}\ll 1$ for $\sigma=1$ and $|z_{0}|\leq\delta_{\infty}^{C(\sigma)}$ for $\sigma>1$ and $C(\sigma)$ is a sufficiently large constant. Then the following results hold: there exists a constant $C>0$ such that for any time $t\geq 0$
\begin{equation}\label{eq:uppZ}
|z(t)|\leq\ C(|z_{0}|^{-2}+\delta_{\infty}^{4\sigma-2}t)^{-\frac{1}{2}};
\end{equation}
if $\sigma=1$ then
\begin{equation}\label{eq:estR4}
\|\langle x\rangle^{-4 }R_{\geq 4}\|_{2}\lesssim |z_{0}|^{2}(1+t)^{-\frac{3}{2}}+\delta_{\infty}|z_{0}|^{2}|z(t)|^2;
\end{equation}
\begin{equation}\label{eq:estJunk}
|\dot\lambda| \lesssim \delta_{\infty}\ |z(t)|^{4}+\delta_{\infty}^2 |z_0|^2 |z(t)|^3+\delta_{\infty}|z_0|^{4}(1+t)^{-3}+\delta_{\infty}|z_0|^2 |z(t)|(1+t)^{-\frac{3}{2}},
\end{equation}
and if $\sigma>1$ then
\begin{equation}
\|\langle x\rangle^{-4 }R_{\geq 4}\|_{2}\lesssim |z_{0}|^{2}(1+t)^{-\frac{3}{2}}+[|z_{0}|^{2}+|z_{0}|^{2\sigma-1}]|z(t)|^2,
\end{equation}
\begin{equation}
|\dot\lambda |\lesssim |z|^{2\sigma+1}+|z(t)|^{4}+\delta_{\infty}|z_0|^2 |z(t)|^3+|z_0|^4 (1+t)^{-3}+|z_0|^2 |z(t)|(1+t)^{-\frac{3}{2}}.
\end{equation}
\end{proposition}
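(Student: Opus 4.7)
The plan is to run a coupled bootstrap for the three quantities $|z(t)|$, $\|\langle x\rangle^{-4}R_{\geq 4}\|_2$, and $|\dot\lambda|$, building on the baseline decay estimates (B)--(C) of Theorem~\ref{THM:MassTransfer} already established in \GW, together with the refined expansion $R=R_2+R_3+R_{\geq 4}$ introduced via \eqref{eq:Rform}, \eqref{eq:difRgeq3}, \eqref{dif:R4}. The bootstrap hypothesis on $[0,T)$ will be slightly weakened versions of the three target inequalities (with larger constants), and the smallness assumption $|z_0|/\delta_\infty\ll 1$ (respectively $|z_0|\le\delta_\infty^{C(\sigma)}$ for $\sigma>1$) will supply the small factor needed to improve each constant and close the loop at $t=T$.

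For the bound on $|z(t)|$, I would integrate the differential inequality obtained by combining \eqref{old-lam-z2} with the FGR lower bound \eqref{eq:FGR},
\begin{equation}
\frac{d}{dt}|z|^2\ \le\ -2C_1\delta_\infty^{4\sigma-2}|z|^4\ +\ |\Rem_z(t)|,\nonumber
\end{equation}
using $\|\phi^\lambda\|_\infty\sim\delta_\infty$ from \eqref{eq:defDelta}. Under the bootstrap hypothesis the remainder bound in \eqref{Rem-ests} is strictly dominated by $\delta_\infty^{4\sigma-2}|z|^4$ (with room to spare, thanks to the smallness assumption on $|z_0|$), so comparison with the scalar ODE $\dot u=-cu^2$ yields $|z(t)|^{-2}\gtrsim |z_0|^{-2}+c\delta_\infty^{4\sigma-2}t$, which is \eqref{eq:uppZ}.

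For $\|\langle x\rangle^{-4}R_{\geq 4}\|_2$, I would write the evolution equation satisfied by $R_{\geq 4}$: after subtracting out the $R_2$ and $R_3$ pieces (which are explicit polynomial combinations of $z,\bar z$ times fixed spatial profiles determined by resolvent formulas like \eqref{Zkl-sym}), the PDE takes the schematic form
\begin{equation}
(\partial_t-L(\lambda))R_{\geq 4}\ =\ F_{\geq 4}(z,\bar z, R,\dot\lambda,\dot\gamma),\nonumber
\end{equation}
with a source term of formal order $\ge 4$ in $z$ and with suitable localization in $x$. Duhamel's formula, combined with the local-decay estimate $\|\langle x\rangle^{-4}e^{L(\lambda)t}P_c\langle x\rangle^{-4}\|_{2\to 2}\lesssim(1+t)^{-3/2}$ and the already-known decay rates for $z(t)$, splits $R_{\geq 4}$ into a free contribution of size $|z_0|^2(1+t)^{-3/2}$ (inherited from the initial projection onto the high-order piece) and a forced contribution controlled by $|z(t)|^2$ times a $\delta_\infty$ (for $\sigma=1$) or $|z_0|^{2\sigma-1}$ (for $\sigma>1$) prefactor extracted from $F_{\geq 4}$.

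Finally, for $|\dot\lambda|$ I would substitute the full decomposition $R=R_2+R_3+R_{\geq 4}$ into the exact formula for $\dot\lambda$ derived in Appendix~\ref{sec:decom}, and then carefully identify, using the orthogonality relations \eqref{eq:Orthogonality}--\eqref{s-Rorthogonal} and the symmetries of $R_2,R_3$, which formal fourth-order-in-$z$ contributions survive and which vanish. The surviving terms are estimated by pairing $|z|^k$ against weighted norms of $R_j$ or $R_{\geq 4}$ to produce each of the four terms in \eqref{eq:estJunk}: $\delta_\infty|z|^4$ from the $R_2\cdot R_2$-type cross terms, $\delta_\infty^2|z_0|^2|z|^3$ from $R_2\cdot R_3$ cross terms, $\delta_\infty|z_0|^4(1+t)^{-3}$ from the free part of $R_{\geq 4}$, and $\delta_\infty|z_0|^2|z|(1+t)^{-3/2}$ from the forced part. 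The main obstacle will be precisely this last accounting: one must track how every term feeds back into the bootstrap so that the smallness of $|z_0|/\delta_\infty$ (or $|z_0|\le\delta_\infty^{C(\sigma)}$) is genuinely sufficient to close the argument, which is particularly delicate for $\sigma>1$ because the FGR dissipation $\delta_\infty^{4\sigma-2}|z|^4$ is much weaker while the nonlinearity produces higher powers of $|z|$, and this is exactly what forces the more restrictive hypothesis $|z_0|\le\delta_\infty^{C(\sigma)}$ in that case.
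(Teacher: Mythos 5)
Your overall strategy — a coupled bootstrap on $|z|$, $\|\langle x\rangle^{-4}R_{\geq 4}\|_2$, $|\dot\lambda|$, comparison of $\frac{d}{dt}|z|^2$ with the scalar ODE $\dot u=-cu^2$, Duhamel with local-decay propagator bounds for $R_{\geq 4}$, and substitution of the refined decomposition into the exact $\dot\lambda$ equation — is the same architecture the paper uses (its Propositions~\ref{Prop:KeyEst} and~\ref{Prop:Majorants}, with the majorant functions $\mathcal{M}_1,\ldots,\mathcal{M}_4$ playing the role of your bootstrap hypotheses). However, there is one concrete gap in your argument for \eqref{eq:uppZ}.

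You assert that under the bootstrap hypothesis the remainder in $\frac{d}{dt}|z|^2$ is ``strictly dominated by $\delta_\infty^{4\sigma-2}|z|^4$ (with room to spare).'' This pointwise domination fails near $t=0$: for $\sigma=1$ the dissipative term is $\mathcal{O}(\delta_\infty^2|z|^4)$, while the remainder contains pieces such as $\delta_\infty|z|^2|z_0|^2(1+t)^{-3/2}\mathcal{M}_3$ which, at $t=0$ with $|z|\approx|z_0|$ and only the bootstrap bound $\mathcal{M}_3\lesssim\delta_\infty^{-1/2}$ available, are of size $\delta_\infty^{1/2}|z_0|^4$ — much larger than $\delta_\infty^2|z_0|^4$ when $\delta_\infty$ is small. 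The paper explicitly flags this (``the remainder is NOT a higher order correction in a neighborhood of $t=0$'') and handles it by a multiplicative trick: define $|\tilde z|^2(t)=|z|^2(t)\exp\left(-\int_0^t 2\,\Re z^*\cdot\text{Remainder}\,/|z|^2\,ds\right)$, show the integral in the exponent is $\mathcal{O}(|z_0|/\delta_\infty)\ll1$ (so $\tilde z\approx z$), derive two-sided bounds on $\frac{d}{dt}|\tilde z|^{-2}$ from the FGR, and split the analysis at $t\sim\delta_\infty^{-2}|z_0|^{-2}$, where for larger $t$ the algebraic decay $(1+t)^{-1}\lesssim\delta_\infty^2(|z_0|^{-2}+\delta_\infty^2 t)^{-1}$ restores pointwise domination. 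Your comparison-ODE step needs to be replaced by, or supplemented with, this integrating-factor-and-regime-splitting argument; without it, the naive ODE comparison does not close near $t=0$.

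A smaller point: you invoke the crude $\Rem_z$ bound \eqref{Rem-ests}, whose leading term is $|z|^5$. To match the $\delta_\infty^{4\sigma-2}$ coefficient in \eqref{eq:uppZ} for $\sigma=1$, you must use the paper's refined bound $|\mathcal{K}|\lesssim\delta_\infty\cdot\text{Remainder}$ from Lemma~\ref{LM:apremainder} / Proposition~\ref{Prop:Majorants}, which extracts an extra factor of $\delta_\infty$ that the crude estimate does not have; this $\delta_\infty$ is exactly what makes $\delta_\infty|z|^5$ subordinate to $\delta_\infty^2|z|^4$ under the hypothesis $|z_0|\ll\delta_\infty$.
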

This proposition will be formulated as different parts of Propositions ~\ref{Prop:KeyEst} and ~\ref{Prop:Majorants} in Appendix ~\ref{SEC:DetailInfo}.\

In the next two subsections we find and estimate the functions $S_{z}$ and $S_{\lambda}$ of ~\eqref{eq:IncreaseLambda} and ~\eqref{eq:DecayZ}.
\subsection{Definition of $S_{z}$ and its estimate}
In this part we define and estimate the function $S_{z}$ in ~\eqref{eq:DecayZ}.

It was proved in ~\cite{GaWe},  p. 293 (and can also be derived from ~\eqref{eq:z1} and ~\eqref{eq:z2}) that $z$ satisfies the equation
\begin{align}\label{eq:ZNequation}
\partial_{t}z+iE(\lambda)z=-\Gamma(z,\bar{z})z+\Lambda(z,\bar{z})z+\mathcal{K}
\end{align} where
$\Gamma(z,\bar{z})$ is positive definite and $\Lambda(z,\bar{z})$ is skew symmetric,
$\mathcal{K}=(\mathcal{K}_{1},\cdots, \mathcal{K}_{N})^{T}$ is defined as $$
\begin{array}{lll}
\mathcal{K}_n&:=&-[\partial_{t}p_{n}+iE(\lambda)\displaystyle\sum_{k+l=2,3}(k-l)P^{(n)}_{k,l}]-i [\partial_{t}q_{n}+iE(\lambda)\displaystyle\sum_{k+l=2,3}(k-l)Q^{(n)}_{k,l}]\\
& &-\left< JN(\vec{R},p,z)-\displaystyle\sum_{m+n=2,3} JN_{m,n},
\left(\begin{array}{lll}\eta_{n}\\-i\xi_{n}\end{array}\right)\right>+\Upsilon_{1,1}[\left< q\cdot \eta, \eta_{n}\right>-\left< i p\cdot\xi, \xi_{n}\right>]\\
& &+[\dot\gamma-\Upsilon_{1,1}][\langle (\beta+q)\cdot\eta, \eta_{n}\rangle-i\langle (\alpha+p)\cdot\xi, \xi_{n}\rangle]\\
& &\\
& &-\dot\lambda[a_1\langle \partial_{\lambda}^{2}\phi^\lambda,\eta_n\rangle+\langle (\alpha+p)\cdot\partial_{\lambda}\xi,\eta_n\rangle+ia_2\langle \partial_{\lambda}\phi^\lambda,\xi_n\rangle+i\langle (\beta+q)\cdot\partial_{\lambda}\eta,\xi_n\rangle]\\
& &+\left\langle \vec{R},\dot\lambda \left(
\begin{array}{lll}
\partial_{\lambda}\eta_{n}\\
-i\partial_{\lambda}\xi_{n}
\end{array}\right)-\dot\gamma \left(
\begin{array}{lll}
i\xi_n\\
\eta_n
\end{array}
\right)
\right\rangle.
\end{array}
$$

Recall that $|z|^2$ measures   the neutral mode mass. By direct computation we find
$$
\begin{array}{lll}
\frac{d}{dt}|z|^2&=&-2 z^*\Gamma(z,\bar{z})z+2Re z^*\cdot \mathcal{K}\ =\ -2 z^*\Gamma_0(z,\bar{z})z+S_{z}
\end{array}
$$ with the function $S_z$ defined by
\begin{equation}\label{eq:defSz}
S_{z}:=-2 z^*\Gamma(z,\bar{z})z+ 2 z^*\Gamma_0(z,\bar{z})z+2 Re z^*\cdot \mathcal{K}.
\end{equation}

We now estimate different terms on the right hand side of ~\eqref{eq:defSz}.
\begin{lemma}\label{LM:Junkz}
For $\sigma\geq 1$
\begin{equation}\label{eq:approxPos}
z^* \Gamma(z,\bar{z}) z=z^* \Gamma_0(z,\bar{z}) z+\mathcal{O}(\delta_{\infty}^{4\sigma-1}|z|^4).
\end{equation}
If $\sigma=1$ then
\begin{equation}\label{eq:ref1}
|\mathcal{K}|\lesssim \delta_{\infty} |z(t)|^{4}+\delta_{\infty}\|\langle x\rangle^{-4 }R_{\geq 4}\|_{2}^{2}+\delta_{\infty}|z(t)|\|\langle x\rangle^{-4 }R_{\geq 4}\|_{2};
\end{equation}
if $\sigma>1$ then
\begin{equation}\label{eq:ref2}
|\mathcal{K}|\lesssim |z(t)|^{2\sigma+1}+|z(t)|^{4}+\|\langle x\rangle^{-4 } R_{\geq 4}\|_{2}^{2}+|z(t)|\|\langle x\rangle^{-4 }R_{\geq 4}\|_{2}.
\end{equation}
\end{lemma}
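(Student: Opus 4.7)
\textbf{Proof plan for Lemma \ref{LM:Junkz}.}

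The lemma has two distinct parts: (i) the algebraic identification of $z^*\Gamma z$ with $z^*\Gamma_0 z$ up to a relative $O(\delta_\infty)$ error, and (ii) term-by-term estimates on the large expression for $\mathcal{K}$. I would handle them separately.

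For \eqref{eq:approxPos}, the plan is to substitute the bifurcation expansions into the definition \eqref{Gamma-kl}. The quantity $[\Gamma]_{kl}$ is bilinear in $G_k$ and $G_l$, each of which contains a factor $(\phi^\lambda)^{2\sigma-1} = \delta^{2\sigma-1}(\phi_{lin})^{2\sigma-1} + O(\delta^{4\sigma-1})$ by Proposition \ref{bif-of-gs}; the eigenvectors obey $\xi_n = \xi_n^{lin} + O(\delta^{2\sigma})$ and $\eta_n = \xi_n^{lin} + O(\delta^{2\sigma})$ by \eqref{eq:GoToNear}; and $L(\lambda) + 2iE(\lambda) = (L(-e_0,0) + 2iE(-e_0)) + O(\delta^{2\sigma})$ on the range of $P_c$. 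Plugging these asymptotic expansions into \eqref{Gamma-kl}, collecting the leading $\delta^{4\sigma-2}$ piece recovers precisely the formula \eqref{eq:Gamma0} for $\Gamma_0$, and every correction is at least one order higher in $\delta$, giving the $O(\delta_\infty^{4\sigma-1}|z|^4)$ remainder. The limit in the resolvent sense $-i0$ is handled as in \cite{GaWe} via the limiting absorption principle.

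For \eqref{eq:ref1} and \eqref{eq:ref2} I would inspect the five blocks of $\mathcal{K}_n$ one at a time. The blocks $\partial_t p_n + iE(\lambda)\sum(k-l)P^{(n)}_{k,l}$ and the analogous one for $q_n$ are precisely the residuals after the normal-form construction of Appendix \ref{sec:NormalFormExp} has removed the resonant monomials of orders $|z|^2$ and $|z|^3$; what survives is controlled by $|\dot\lambda|\cdot|z|^{2}$-type quantities together with still higher monomials in $z$, both of which are absorbed into the stated bounds via Proposition \ref{prop:useful}. The nonlinear discrepancy $\langle JN(\vec R, p, z) - \sum_{m+n=2,3}JN_{m,n}, (\eta_n,-i\xi_n)^T\rangle$ is, by construction, the Taylor remainder of $|\psi|^{2\sigma}\psi$ after subtraction of its degree $\leq 3$ polynomial part in $(z,\bar z)$; decomposing $R = \sum_{m+n=2,3}z^m\bar z^n R_{m,n} + R_{\geq 4}$ and pairing against the spatially localized $\xi_n,\eta_n$ yields exactly $|z|^{2\sigma+1}$ from the pure polynomial part ($\sigma>1$) or $\delta_\infty |z|^4$ ($\sigma=1$), plus the mixed terms $|z|\|\langle x\rangle^{-4}R_{\geq 4}\|_2$ and $\|\langle x\rangle^{-4}R_{\geq 4}\|_2^2$. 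The three remaining blocks are bounded using: $p,q$ and $\Upsilon_{1,1}$ start at order $|z|^{2}$; the estimates of Proposition \ref{prop:useful} on $\dot\lambda$; the analogous bounds on $\dot\gamma - \Upsilon_{1,1}$ from \cite{GaWe}; and the exponential decay of $\xi_n,\eta_n,\partial_\lambda\xi_n,\partial_\lambda\eta_n$ which lets one trade $\langle \vec R, \cdots\rangle$ for $\|\langle x\rangle^{-4}\vec R\|_2$ and isolate the $R_{\geq 4}$ contribution using the explicit formulas for the lower-order $R_{m,n}$.

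The main obstacle, I expect, is the Taylor-remainder step in the nonlinear block: one must check that after subtracting $\sum_{m+n=2,3}JN_{m,n}$, the remaining polynomial contributions genuinely carry a $\delta_\infty$ factor when $\sigma = 1$ (so that they do not pollute the estimates needed for the equipartition identity), while for $\sigma>1$ the leading unavoidable polynomial contribution is $|z|^{2\sigma+1}$ rather than something smaller. This requires an explicit bookkeeping of which factors of $\phi^\lambda$ survive differentiation of $|\psi|^{2\sigma}\psi$ at each monomial and is the reason the final bounds bifurcate between $\sigma=1$ and $\sigma>1$.
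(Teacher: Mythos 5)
Your plan for the estimate \eqref{eq:approxPos} has a genuine gap, and it is not merely a bookkeeping detail. After you substitute the bifurcation expansions and reduce the $2\times 2$ matrix resolvent $[L(\lambda)+2iE(\lambda)-0]^{-1}$ to its two scalar branches (which requires conjugating by the unitary $U=\frac{1}{\sqrt{2}}\bigl(\begin{smallmatrix}1&i\\i&1\end{smallmatrix}\bigr)$, a step your sketch omits), the leading $\delta^{4\sigma-2}$ coefficient of $z^*\Gamma z$ is \emph{not} $z^*\Gamma_0 z$ alone. It is $z^*\Gamma_0 z + H_{2,2}$, where
\begin{equation*}
H_{2,2}\ =\ -2\,\delta^{4\sigma-2}\sigma^2(\sigma-1)^2\,\Re\,\bigl\langle i(-\Delta+V+\lambda+2E(\lambda))^{-1}P_c^{lin}(\phi_{lin})^{2\sigma-1}(z\cdot\xi^{lin})^2,\ (\phi_{lin})^{2\sigma-1}(z\cdot\xi^{lin})^2\bigr\rangle,
\end{equation*}
coming from the non-resonant branch $\lambda+2E(\lambda)$ of the diagonalized resolvent, at exactly the same order in $\delta$. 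Your claim that ``every correction is at least one order higher in $\delta$'' is therefore false as stated: $H_{2,2}$ is \emph{not} higher order. It vanishes, but for a structurally different reason — because $\lambda+2E(\lambda)$ lies outside $\sigma(-\Delta+V)$, the resolvent there is bounded and self-adjoint, so the inner product is real and $\Re\langle i\cdot\rangle=0$. (For $\sigma=1$ the prefactor $(\sigma-1)^2$ kills it trivially, which is perhaps why the issue is easy to overlook, but the lemma asserts \eqref{eq:approxPos} for all $\sigma\ge 1$.) Without isolating and killing $H_{2,2}$ you cannot conclude \eqref{eq:approxPos} for $\sigma>1$.

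On the $\mathcal{K}$ bounds \eqref{eq:ref1}--\eqref{eq:ref2}, your block-by-block scheme is the right kind of argument, but you should be explicit about how to break the circularity between $\mathcal{K}$, $\dot\lambda$, and $\dot\gamma-\Upsilon_{1,1}$: the blocks $\partial_t p_n$, $\partial_t q_n$ produce $|z|\,(|\dot\lambda|+|\dot\gamma-\Upsilon_{1,1}|)$ on the right, and $\dot\lambda$, $\dot\gamma$ in turn satisfy a system whose source terms involve $\mathcal{K}$-like quantities. The paper resolves this by writing the $(\dot\lambda,\dot\gamma-\Upsilon_{1,1})$ equations in matrix form $[\mathrm{Id}+\Pi]\,(\dot\lambda,\dot\gamma-\Upsilon_{1,1})^T=\Omega+\cdots$, verifying $\|\Pi\|\ll 1$ (using the ``almost orthogonality'' $\frac{1}{\langle\phi^\lambda,\partial_\lambda\phi^\lambda\rangle}\langle R_2,\partial_\lambda^2\phi^\lambda\rangle=\mathcal{O}(|z_0|^{1/2})$), and inverting; only afterward does one feed the decoupled $\dot\lambda$ bound back into the $\mathcal{K}$ estimate. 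Your proposal gestures at ``bounds on $\dot\lambda$ from Proposition \ref{prop:useful}'', but since that proposition is precisely what these estimates feed into, you should justify that the $\dot\lambda$ bound you invoke does not presuppose the $\mathcal{K}$ bound you are proving.
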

 Equation ~\eqref{eq:approxPos} will be proved in Appendix ~\ref{sec:approxPos}, ~\eqref{eq:ref1} and ~\eqref{eq:ref2} will be incorporated into Proposition ~\ref{Prop:Majorants}.
By above estimates  we have
\begin{theorem}\label{THM:Zequation}
\begin{equation}\label{eq:estSz}
\int_{0}^{\infty}S_{z}(s)\ ds=o(|z_0|^2).
\end{equation}
\end{theorem}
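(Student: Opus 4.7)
The strategy I would adopt is to bound each of the three pieces of $S_z$ in \eqref{eq:defSz} pointwise in $t$ using Lemma \ref{LM:Junkz} together with the a priori control of $R_{\geq 4}$ from \eqref{eq:estR4}, and then integrate in time using the sharp decay estimate \eqref{eq:uppZ}.

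First, I would dispose of the discrepancy $2z^*(\Gamma_0 - \Gamma)z$. By \eqref{eq:approxPos}, $|z^*(\Gamma-\Gamma_0)z| \lesssim \delta_\infty^{4\sigma-1}|z(t)|^4$. Using $|z(t)|^2\lesssim (|z_0|^{-2}+\delta_\infty^{4\sigma-2}t)^{-1}$, the substitution $u = |z_0|^{-2} + \delta_\infty^{4\sigma-2}t$ yields $\int_0^\infty |z(t)|^4\,dt\lesssim|z_0|^2/\delta_\infty^{4\sigma-2}$, so this contribution is $\lesssim \delta_\infty|z_0|^2 = o(|z_0|^2)$ under the smallness assumption $|z_0|/\delta_\infty\ll 1$ (resp.\ $|z_0|\le\delta_\infty^{C(\sigma)}$ in the case $\sigma>1$).

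Next, I would handle the residual $\operatorname{Re}\,z^*\cdot\mathcal{K}$ term by term. In the cubic case, \eqref{eq:ref1} gives a pointwise bound whose three summands involve $|z|^5$, $|z|\,\|\langle x\rangle^{-4}R_{\geq 4}\|_2^2$, and $|z|^2\,\|\langle x\rangle^{-4}R_{\geq 4}\|_2$. For each I would substitute \eqref{eq:estR4} and \eqref{eq:uppZ}, producing a sum of expressions of the shape $\delta_\infty^\alpha|z_0|^\beta\int_0^\infty|z(t)|^\gamma(1+t)^{-\rho}\,dt$; every such integral is either of the form $\int(a+bt)^{-\gamma/2}dt$ handled as above, or contains an integrable factor $(1+t)^{-\rho}$ with $\rho>1$ that can be peeled off. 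Collecting exponents shows that each resulting contribution carries at least one extra positive power of $\delta_\infty$ or of the small ratio $|z_0|/\delta_\infty$ beyond the base factor $|z_0|^2$, and hence is $o(|z_0|^2)$. The case $\sigma>1$ is parallel, using \eqref{eq:ref2} in place of \eqref{eq:ref1} and the corresponding $R_{\geq 4}$ estimate from Proposition \ref{prop:useful}.

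The main obstacle is purely the bookkeeping: after forming the product of a power of $|z(t)|$, a factor $(1+t)^{-\rho}$, and integrating, one must verify that the resulting coefficient in $|z_0|$ and $\delta_\infty$ is strictly dominated by $|z_0|^2$. The worst terms are those of top order in $|z|$ (the quintic $|z|^5$ in $\mathcal{K}$, and its analogue $|z|^{2\sigma+1}$ for general $\sigma$), and the bound $\int|z|^5\,dt\lesssim|z_0|^3/\delta_\infty^{4\sigma-2}$ requires the full $t\to\infty$ decay of $|z|$, not just its initial value. It is this tension that forces the hypothesis $|z_0|\le\delta_\infty^{C(\sigma)}$ for large $\sigma$; once that hypothesis is in place, each summand falls within the $o(|z_0|^2)$ budget and the theorem follows by summing the contributions.
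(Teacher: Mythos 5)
Your proposal is correct and follows essentially the same route as the paper: split $S_z$ into the $\Gamma-\Gamma_0$ discrepancy (controlled by \eqref{eq:approxPos} and $\int\delta_\infty^{4\sigma-2}|z|^4\,dt\lesssim|z_0|^2$) and the $\operatorname{Re}\,z^*\cdot\mathcal{K}$ piece (controlled by \eqref{eq:ref1}--\eqref{eq:ref2}, \eqref{eq:estR4}, \eqref{eq:uppZ}, and the smallness hypothesis $|z_0|\ll\delta_\infty$, resp.\ $|z_0|\le\delta_\infty^{C(\sigma)}$). The bookkeeping you describe is exactly what the paper carries out.
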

\begin{proof}
The following two estimates together with Lemma ~\ref{LM:Junkz} are sufficient to prove the theorem:
\begin{equation}\label{eq:estzK}
\int_{0}^{\infty} |z|(s)|\mathcal{K}(s)|\ ds=o(|z_0|^2)
\end{equation}
and
\begin{equation}\label{eq:z02}
\int_{0}^{\infty}\delta_{\infty}^{4\sigma-2} |z|^4(s)\ ds\leq C|z_0|^2.
\end{equation}

We next focus on proving the two inequalities \eqref{eq:estzK} and \eqref{eq:z02}.
 The proof of ~\eqref{eq:z02} is relatively easy; it  follows  applying the estimate of $z$ in ~\eqref{eq:uppZ} and direct computation.

We now turn to ~\eqref{eq:estzK}.
For $\sigma=1$ we use ~\eqref{eq:ref1} and ~\eqref{eq:estR4} to obtain
$$|\mathcal{K}|\lesssim \delta_{\infty}|z|^4+ \delta_{\infty}|z_0|^2 |z| (1+t)^{-\frac{3}{2}}+\delta_{\infty}^2 |z|^3 |z_0|^2 +|z_0|^4\delta_{\infty} (1+t)^{-3}.$$
Together with the assumption on the initial condition $|z|\ll \delta_{\infty}=\mathcal{O}(\delta_{0})$ (see ~\eqref{eq:defDelta}) and ~\eqref{eq:uppZ} we have
\begin{equation}\label{eq:intRemaind}
\int_{0}^{\infty}|z||\mathcal{K}|(s)\ ds=o(|z_{0}|^2).
\end{equation}

\nit For the case, $\sigma>1$, the estimate is easier to obtain by applying the stronger condition $|z_0|\leq O(\delta_{0}^{C(\sigma)})=\mathcal{O}(\delta_{\infty}^{C(\sigma)})$ with $C(\sigma)$ being sufficiently large.
 This completes the proof.
\end{proof}

\subsection{Definition of $S_{\lambda}$ and its estimate}
After expanding the dispersive part $\vec{R}$ into the third order in $z$ and $\bar{z}$, we derive in Appendix ~\ref{sec:deriv} an equation for $\frac{d}{dt}\|\phi^{\lambda}\|_2^2$:
\begin{equation}\label{eq:DetailLambda}
\frac{d}{dt}\|\phi^{\lambda}\|_2^2
=- z^* \Gamma_0(z,\bar{z})z+S_{\lambda}
\end{equation}
with $S_{\lambda}$ defined as
\begin{equation}
S_{\lambda}:=
2\Psi\ +\ \left(\ 2\Pi_{2,2} + z^* \Gamma_0(z,\bar{z})z\ \right)
\ +\ 2\sum_{m+n=4,5;\ m\ne n}
\Pi_{m,n} \nn\end{equation}
here $\displaystyle\sum_{m+n=4,5}\Pi_{m,n}$ is a collection of fourth and fifth order terms
$$
\begin{array}{lll}
\displaystyle\sum_{m+n=4,5}\Pi_{m,n}&:=
&-\left\langle \displaystyle\sum_{m+n=4}JN_{m,n}\ ,\ \left(
\begin{array}{lll}
\phi^{\lambda}\\
0
\end{array}\right)
\right\rangle+\Upsilon_{1,1}\left\langle \displaystyle\sum_{m+n=2,3}R_{m,n},\left(
\begin{array}{lll}
0\\
\phi^{\lambda}
\end{array}\right)
\right\rangle+\Upsilon_{1,1}\left\langle q\cdot \eta,\phi^{\lambda}\right\rangle\\
&&\nn\\
& &+\langle \phi^{\lambda},\partial_{\lambda}\phi^{\lambda}\rangle\ \left[\partial_{z}a_1\cdot Z_{2,1}+\partial_{\bar{z}}a_1\cdot\overline{Z_{2,1}}\right],
\end{array}
$$
and $Z_{2,1}:=-\Gamma(z,\bar{z})z+\Lambda(z,\bar{z})z$ with the latter defined in ~\eqref{new-nf1};
and $\Psi$ is defined as
$$
\begin{array}{lll}
\Psi&:=&(\dot\gamma -\Upsilon_{1,1})\left[\ \left\langle \vec{R},
\left(
\begin{array}{lll}
0\\
\phi^{\lambda}
\end{array}
\right)
\right\rangle+\left\langle (\beta+q)\cdot\eta,\phi^{\lambda}\right\rangle\ \right]+
\Upsilon_{1,1}\left\langle R_{\geq 4},\left(
\begin{array}{lll}
0\\
\phi^{\lambda}
\end{array}
\right)\right\rangle\\
& &\\
& &+\dot\lambda\left\langle \vec{R},
\left(
\begin{array}{lll}
\partial_{\lambda}\phi^{\lambda}\\
0
\end{array}
\right)
\right\rangle-\dot\lambda a_{1}\langle \partial_{\lambda}^{2}\phi^{\lambda},\phi^\lambda\rangle-\dot\lambda (\alpha+p)\langle \partial_{\lambda}\xi,\phi^{\lambda}\rangle\\
& &+\langle \phi^{\lambda},\partial_{\lambda}\phi^{\lambda}\rangle[\partial_{t}a_1+iE(\lambda)\displaystyle
\sum_{m+n=2,3}(m-n)A_{m,n}^{(1)}-\partial_{z}a_1\cdot Z_{2,1}-\partial_{\bar{z}}a_1\cdot\overline{Z_{2,1}}]\\
& &+\left\langle JN-\displaystyle\sum_{m+n=2}^{4}JN_{m,n}\ ,\ \left(
\begin{array}{lll}
\phi^{\lambda}\\
0
\end{array}
\right)\right\rangle,
\end{array}
$$ Here we used the convention made in ~\eqref{eq:convention} and the definitions of  Appendix ~\ref{sec:NormalFormExp}.

To control these terms in $S_{\lambda}$ we use the following results: (Recall  $\delta_{\infty}=\|\phi^{\lambda_{\infty}}\|_2$, defined in ~\eqref{eq:defDelta}.)
\begin{lemma}\label{LM:junk}
\begin{align}\label{eq:KeyTermRef}
|\Psi| &\lesssim |z|\delta_{\infty}^{2\sigma-1}\|\langle x\rangle^{-4}R_{\geq 4}\|_{2}+\|\langle x\rangle^{-4}R_{\geq 4}\|_{2}^2+\delta_{\infty}^{2\sigma-1}|z|^5\\ &\nn\\
\label{eq:KeyTerm}
2\Pi_{2,2}+ z^* \Gamma_0(z,\bar{z})z &= \mathcal{O}(\delta_{\infty}^{4\sigma-1}|z|^4),
\\ &\nn\\
\label{eq:small}
\sum_{
\begin{subarray}{lll}
m+n=4,5,\\
m\not=n
\end{subarray}
}\int_0^\infty \Pi_{m,n}(s)\ ds &\lesssim \sum_{
\begin{subarray}{lll}
m+n=4,5,\\
m\not=n
\end{subarray}
}\int_0^\infty |\partial_{\lambda}\Pi_{m,n}||\dot\lambda|(s)+|\partial_{z}\Pi_{m,n}||\dot{z}+iE(\lambda)z|(s)\ ds\\ &+\ o(|z_0|^2) \nn
\end{align}
\end{lemma}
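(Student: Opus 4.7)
The plan is to verify the three estimates of Lemma~\ref{LM:junk} in turn, using the a priori bounds of Proposition~\ref{prop:useful}, the orthogonality relations~\eqref{eq:Orthogonality} and~\eqref{s-Rorthogonal}, the decomposition of $\vec{R}$ into its polynomial-in-$(z,\bar z)$ part plus $R_{\geq 4}$, and the fact that $\dot\gamma - \Upsilon_{1,1}$ is a normal-form residual of the same type as $\mathcal{K}$ in Lemma~\ref{LM:Junkz}.

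For~\eqref{eq:KeyTermRef} I would bound each of the seven summands in the definition of $\Psi$. The piece $(\dot\gamma - \Upsilon_{1,1})\,[\langle\vec{R},(0,\phi^\lambda)^T\rangle + \langle(\beta+q)\cdot\eta,\phi^\lambda\rangle]$ is controlled by combining the $\mathcal{K}$-type estimate for $\dot\gamma - \Upsilon_{1,1}$ with the symplectic orthogonality~\eqref{s-Rorthogonal}, which lets me re-express $\langle\vec{R},(0,\phi^\lambda)^T\rangle$ in terms of $R_{\geq 4}$ alone. The term $\Upsilon_{1,1}\langle R_{\geq 4},(0,\phi^\lambda)^T\rangle$ is bounded directly by the exponential decay of $\phi^\lambda$ (producing the $\langle x\rangle^{-4}$ weight) times $|z|^2\delta_\infty^{2\sigma-1}$. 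The $\dot\lambda$ pieces are controlled by~\eqref{eq:estJunk} together with the rapid spatial decay of $\partial_\lambda\phi^\lambda$ and $\partial_\lambda\xi$. The normal-form residual $\partial_t a_1 + iE(\lambda)\sum(m-n)A^{(1)}_{m,n} - \partial_z a_1\cdot Z_{2,1} - \partial_{\bar z}a_1\cdot\overline{Z_{2,1}}$ is, by the construction in Appendix~\ref{sec:NormalFormExp}, of order $\delta_\infty^{2\sigma-1}|z|^4 + \delta_\infty^{2\sigma-1}|z|\,\|\langle x\rangle^{-4}R_{\geq 4}\|_2$. Finally, the nonlinearity tail $\langle JN - \sum_{m+n=2}^{4}JN_{m,n},(\phi^\lambda,0)^T\rangle$ is polynomial of degree $\geq 5$ in $(z,\bar z,R)$ and directly absorbed into the right-hand side.

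For~\eqref{eq:KeyTerm} the argument is essentially algebraic. I would rewrite $\Pi_{2,2}$ as the $m=n=2$ component of $-\langle JN_{m,n},(\phi^\lambda,0)^T\rangle$, then substitute the explicit quadratic parts $R_{2,0}, R_{1,1}, R_{0,2}$ of $\vec{R}$ constructed in Appendix~\ref{sec:decom} by solving the algebraic relations $(L(\lambda)+2iE(\lambda))R_{2,0} = P_c JN_{2,0}$ and its companions. By the definitions~\eqref{Gamma-kl}, \eqref{eq:Gamma0} of $\Gamma_0$, the resulting quartic-in-$z$ inner product is precisely $-\tfrac{1}{2}z^*\Gamma_0(z,\bar z)z$ modulo terms that carry one additional power of $\delta_\infty$. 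That extra power arises because~\eqref{eq:Gamma0} extracts the leading $\delta^{4\sigma-2}$ prefactor, so every correction from the bifurcation expansion~\eqref{eqn:perturb} of $\phi^\lambda$ around $\phi_{lin}$ contributes an $\mathcal{O}(\delta_\infty^{4\sigma-1}|z|^4)$ error, which is the claim.

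The real obstacle is~\eqref{eq:small}. Each $\Pi_{m,n}$ contains an oscillatory monomial $z^a\bar{z}^b$ with $|a|=m$, $|b|=n$, and for $m\ne n$ its time derivative is, by~\eqref{eq:ZNequation}, to leading order $-i(m-n)E(\lambda)z^a\bar{z}^b$, with $E(\lambda)$ uniformly bounded below on $\mathcal{I}$. Integration by parts in time gives
\begin{equation}
\int_0^T\Pi_{m,n}(t)\,dt \ =\ \Bigl[\frac{\Pi_{m,n}(t)}{-i(m-n)E(\lambda(t))}\Bigr]_0^T \ -\ \int_0^T\frac{\partial_t\Pi_{m,n}(t)}{-i(m-n)E(\lambda(t))}\,dt,
\nn\end{equation}
where, once the oscillatory monomial is peeled off, $\partial_t\Pi_{m,n}$ splits as $\partial_\lambda\Pi_{m,n}\cdot\dot\lambda + \partial_z\Pi_{m,n}\cdot(\dot z+iE(\lambda)z)$ plus the complex conjugate. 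Since $m+n\geq 4$, the boundary contributions are $\mathcal{O}(|z(T)|^{m+n} + |z_0|^{m+n}) = o(|z_0|^2)$ by~\eqref{z-decay}, and the volume term reproduces exactly the majorant on the right-hand side of~\eqref{eq:small}. The hard part is the bookkeeping: one must carefully identify which components of $\partial_t\Pi_{m,n}$ belong to $\dot z+iE(\lambda)z$ versus $\dot\lambda$, and then verify that the slow variation of $\lambda(t)$ inside $E(\lambda(t))$ does not spoil uniform invertibility of $(m-n)E(\lambda(t))$ along the trajectory.
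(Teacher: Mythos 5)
Your treatment of \eqref{eq:small} matches the paper's Section~\ref{sec:periodic} exactly: an integration-by-parts in time against the dominant oscillation $e^{-i(m-n)E(\lambda)t}$, with the boundary term at $t=0$ of order $|z_0|^{m+n}=o(|z_0|^2)$ and the volume term producing the $\dot\lambda$ and $\dot z + iE(\lambda)z$ majorants. Your outline for \eqref{eq:KeyTermRef} is also consistent with the paper's strategy in Appendix~\ref{SEC:DetailInfo}: put the $(\dot\lambda,\ \dot\gamma-\Upsilon_{1,1})$ system in matrix form, invert, and feed the resulting bounds into the seven summands of $\Psi$, exploiting near-colinearity of $\phi^\lambda$ and $\partial_\lambda\phi^\lambda$ to control $\langle R_2,\phi^\lambda\rangle$ through the orthogonality $\langle R_2,\partial_\lambda\phi^\lambda\rangle=0$.

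The gap is in \eqref{eq:KeyTerm}. You assert that after substituting $R_{2,0},R_{1,1},R_{0,2}$, the quartic inner product equals $-\tfrac12 z^*\Gamma_0(z,\bar z)z$ ``modulo terms that carry one additional power of $\delta_\infty$,'' and attribute the extra power purely to the bifurcation expansion $\phi^\lambda=\delta\phi_{lin}+\mathcal{O}(\delta^{2\sigma+1})$. This is not how the estimate works. When one decomposes $\Pi_{2,2}$ as in \eqref{eq:Lambda22}--\eqref{eq:DecN22}, several of the pieces $\Phi_2,\dots,\Phi_7$ are, by raw power counting, of exactly the same size $\delta_\infty^{4\sigma-2}|z|^4$ as $z^*\Gamma_0 z$ itself. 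For instance $\Phi_5=\langle R_{2,1},(-i\sigma(\phi^\lambda)^{2\sigma}\omega,\sigma(\phi^\lambda)^{2\sigma}\rho)^T\rangle$ has $R_{2,1}=\mathcal{O}(\delta_\infty^{2\sigma-2}|z|^3)$ against a source of size $\delta_\infty^{2\sigma}|z|$, i.e.\ $\delta_\infty^{4\sigma-2}|z|^4$, not one power better. Its suppression to $\mathcal{O}(\delta_\infty^{4\sigma-1}|z|^4)$ rests on the structural identity $\Theta_{2,2}=0$: after conjugating by the unitary $U$ that diagonalizes $J$, the diagonal resolvent $\sigma_3[(-\Delta+V+\lambda)\,\mathrm{Id}+\sigma_3 E(\lambda)]^{-1}$ is paired against vectors $(0,1)^T$ and $(i,0)^T$ supported on different entries, so the pairing vanishes identically. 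For $\sigma>1$ the analogous cancellation fails and the paper instead invokes the spherical-symmetry assumption of Theorem~\ref{THM:MainTheorem2} to show the relevant resolvent matrix elements $D_1,D_2$ are \emph{real}, hence drop out after taking $\Re$. Even the ``good'' piece $\Phi_1$ does not give $-\tfrac12 z^*\Gamma_0 z$ alone: it produces an additional quantity $\tilde M_{2,2}$ proportional to $\Re\langle i(-\Delta+V+\lambda+2E(\lambda))^{-1}P_c^{lin}(\cdot),(\cdot)\rangle$, which must be recognized as zero because the resolvent at $\lambda+2E(\lambda)$ is self-adjoint (same mechanism as $H_{2,2}=0$ in Appendix~\ref{sec:approxPos}). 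None of these cancellations follows from the Taylor expansion of $\phi^\lambda$; they are the actual content of Lemma~\ref{LM:Junk} and Section~\ref{sec:compare}, and without them your argument would leave terms of the unacceptable order $\delta_\infty^{4\sigma-2}|z|^4$ in the remainder.
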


\nit The bound ~\eqref{eq:KeyTermRef} will be proved in Appendix ~\ref{SEC:DetailInfo}, ~\eqref{eq:small}  in Section ~\ref{sec:periodic} and ~\eqref{eq:KeyTerm} in Section ~\ref{sec:compare}. We now briefly present the ideas in the proof.
\begin{itemize}
\item[(1)] $\Psi$ is defined in term of functions $\dot\lambda,\ \dot\gamma$, $z$ and $\vec{R}$. They satisfy a coupled system. This system must be put in matrix form and decoupled.  In the end, we bound the functions $\dot\lambda$ and $\dot\gamma$ by the functions of $\vec{R}$ (or $R_{\geq 4}$) and $z$.
\item[(2)] All the integrands in ~\eqref{eq:KeyTerm} are of order $|z|^4$ in $z$ and $\bar{z}.$ What makes the terms different is the sizes of the coefficients. These depend smoothly on the functions $\phi^{\lambda}, \ \partial_{\lambda}\phi^\lambda,\ \xi,\ \eta$, which in turn depend smoothly  on the small parameter $\delta(\lambda)=\mathcal{O}(\delta_{\infty})$; see Proposition ~\ref{Prop:Parameters}. The estimate~\eqref{eq:KeyTerm} follows from a perturbation expansion in the parameter $\delta(\lambda).$
\item[(3)] For ~\eqref{eq:small} the important observation is that, if $m\ne n$,  then function $\Pi_{m,n}$ is a sum of the functions of the form $C(\lambda)z^{m}\bar{z}^n=C(\lambda)\prod_{k}z^{m_{k}}_{k}\prod_{l}\bar{z}^{n_{l}}_{l}$ with $m=\sum_{k}m_k,\ n=\sum_{l}n_{l}$. These are ``almost periodic" with period $2\pi(E(\lambda)(m-n))^{-1}\ne0$ since $z$ satisfies the equation $\dot{z}=-iE(\lambda)z+\cdots$. This non-trivial oscillation enables us to integrate by parts in the variable $s$ to derive smallness. The term $o(|z_0|^2)$ in ~\eqref{eq:small} is due to a  boundary term obtained in this way.
\end{itemize}
Based on the estimates in Lemma ~\ref{LM:junk} we will prove
\begin{theorem}\label{THM:KeyTerm}
$S_{\lambda}$ satisfying the estimate in ~\eqref{eq:IncreaseLambda}, i.e.
$$\int_{0}^{\infty} S_{\lambda}(s)\ ds=o(|z_0|^2).$$
\end{theorem}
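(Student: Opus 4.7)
The plan is to decompose $S_{\lambda}$ into the three pieces listed in Lemma~\ref{LM:junk} and to estimate the time integral of each piece by $o(|z_{0}|^{2})$. Explicitly, by the definition of $S_\lambda$,
\begin{equation*}
\int_{0}^{\infty}S_{\lambda}(s)\,ds = 2\int_{0}^{\infty}\Psi\,ds + \int_{0}^{\infty}\bigl(2\Pi_{2,2}+z^{*}\Gamma_{0}(z,\bar z)z\bigr)\,ds + 2\!\!\sum_{\substack{m+n=4,5\\ m\neq n}}\int_{0}^{\infty}\Pi_{m,n}\,ds,
\end{equation*}
and I would treat the three summands in turn using respectively \eqref{eq:KeyTermRef}, \eqref{eq:KeyTerm}, and \eqref{eq:small} together with the decay estimates collected in Proposition~\ref{prop:useful}.

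For the first summand I would insert \eqref{eq:KeyTermRef} into $\int_{0}^{\infty}|\Psi|\,ds$ and substitute the pointwise bounds \eqref{eq:uppZ} on $|z(t)|$ and \eqref{eq:estR4} on $\|\langle x\rangle^{-4}R_{\geq 4}\|_{2}$. The elementary time integral $\int_{0}^{\infty}|z(t)|^{4}\,dt \lesssim |z_{0}|^{2}\delta_{\infty}^{2-4\sigma}$, which follows directly from \eqref{eq:uppZ}, together with the integrability in $t$ of the envelope $(1+t)^{-3}$, reduces each of the three contributions in \eqref{eq:KeyTermRef} to an expression of the form $\delta_{\infty}^{a}|z_{0}|^{2+b}$ with $a,b\ge 0$ and $a+b>0$; this is $o(|z_{0}|^{2})$ by the smallness assumption $|z_{0}|\ll\delta_{\infty}$ for $\sigma=1$ and $|z_{0}|\le\delta_{\infty}^{C(\sigma)}$ for $\sigma>1$. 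The second summand is handled directly by \eqref{eq:KeyTerm}: combining the pointwise bound $|2\Pi_{2,2}+z^{*}\Gamma_{0}z|=\mathcal{O}(\delta_{\infty}^{4\sigma-1}|z|^{4})$ with the same integral bound on $|z|^{4}$ gives $\lesssim \delta_{\infty}|z_{0}|^{2}=o(|z_{0}|^{2})$.

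The third summand is the most delicate. The identity \eqref{eq:small} already performs the critical integration by parts that exploits the non-trivial oscillation frequency $(m-n)E(\lambda)\ne 0$ of the monomial $z^{m}\bar z^{n}$, and the associated boundary contribution is absorbed into the explicit $o(|z_{0}|^{2})$ on its right-hand side. What remains are the two integrals $\int |\partial_{\lambda}\Pi_{m,n}||\dot\lambda|\,ds$ and $\int |\partial_{z}\Pi_{m,n}||\dot z+iE(\lambda)z|\,ds$. Here $|\partial_{\lambda}\Pi_{m,n}|+|\partial_{z}\Pi_{m,n}|\lesssim \delta_{\infty}^{2\sigma-1}|z|^{m+n-1}$ by the smooth dependence of the soliton parameters on $\delta(\lambda)$ (Proposition~\ref{Prop:Parameters}); $|\dot\lambda|$ is controlled by \eqref{eq:estJunk}; and $|\dot z+iE(\lambda)z|$ is controlled through the $z$-equation \eqref{eq:ZNequation} together with \eqref{eq:ref1}--\eqref{eq:ref2} and \eqref{eq:estR4}. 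Substituting these bounds and integrating with the aid of \eqref{eq:uppZ} produces an $o(|z_{0}|^{2})$ bound for each admissible $(m,n)$.

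The main technical obstacle is the bookkeeping required in this third step: for every pair $(m,n)$ with $m+n\in\{4,5\}$ and $m\neq n$ one must verify that the combined powers of $|z(t)|$, the decaying envelopes $(1+t)^{-\kappa}$, and the factors of $\delta_{\infty}$ and $|z_{0}|$ leave, after time integration, a clear surplus beyond the borderline scale $|z_{0}|^{2}\delta_{\infty}^{2-4\sigma}$ that $\int|z|^{4}\,dt$ just saturates. That surplus must come either from an extra factor of $\delta_{\infty}$, absorbed via $|z_{0}|\ll\delta_{\infty}$, or from a positive power of the ratio $|z_{0}|/\delta_{\infty}$; for $\sigma>1$ the stronger smallness hypothesis $|z_{0}|\le\delta_{\infty}^{C(\sigma)}$ supplies an ample reserve of such surpluses and the estimates simplify substantially.
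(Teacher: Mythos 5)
Your proposal follows essentially the same approach as the paper's proof of Theorem~\ref{THM:KeyTerm}: decompose $S_\lambda$ into the three pieces of Lemma~\ref{LM:junk}, estimate $\int_0^\infty|\Psi|\,ds$ and the $2\Pi_{2,2}+z^*\Gamma_0 z$ contribution by inserting the pointwise bounds \eqref{eq:uppZ}, \eqref{eq:estR4} and integrating, and handle the oscillatory $\Pi_{m,n}$ terms through the integration-by-parts identity \eqref{eq:small} combined with the bounds on $\dot\lambda$, $\mathcal{K}$. One small imprecision worth noting: your template ``$\delta_\infty^a|z_0|^{2+b}$ with $a,b\ge 0$, $a+b>0$'' does not quite describe the $\delta_\infty^{2\sigma-1}|z|^5$ contribution, whose time integral yields $\delta_\infty^{-1}|z_0|^3$ (as in the paper's \eqref{eq:estPsi}), i.e.\ $a=-1$; the correct criterion is the combined exponent $a+b>0$ together with $|z_0|\ll\delta_\infty$, which still delivers the required $o(|z_0|^2)$.
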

\begin{proof}
The result follows directly from Lemma ~\ref{LM:junk} and the following two estimates:
\begin{equation}\label{eq:KeyTerm2}
\int_{0}^{\infty}|\Psi|(s)\ ds=o(|z_0|^2);
\end{equation}
\begin{equation}\label{eq:KeyTerm3}
\sum_{
\begin{subarray}{lll}
m+n=4,5,\\
m\not=n
\end{subarray}
}\int_0^\infty |\partial_{\lambda}\Pi_{m,n}||\dot\lambda|(s)+|\partial_{z}\Pi_{m,n}||\dot{z}+iE(\lambda)z|(s)\ ds=o(|z_0|^2)\ .
\end{equation}

We next prove estimates \eqref{eq:KeyTerm2} and  \eqref{eq:KeyTerm3}.
In the proof we consider the case $\sigma=1$. That of $\sigma>1$  is different, but easier due to  the stronger condition $|z_{0}|\leq\delta_{\infty}^{C(\sigma)}$ for some sufficiently large $C(\sigma)$, and hence omit the details.

We start with ~\eqref{eq:KeyTerm2}, by estimating three different terms in the estimate of $\Psi$ in ~\eqref{eq:KeyTermRef} on the right hand side.
By applying the estimates for $z$ in ~\eqref{eq:uppZ}
\begin{equation}\label{eq:estPsi}
\int_{0}^{\infty}\delta_{\infty}|z|^5(s)\ ds\lesssim \int_{0}^{\infty}\delta_{\infty}(|z_0|^{-2}+\delta_\infty^2 s)^{-\frac{5}{2}}\ ds=\frac{2}{3}\delta_{\infty}^{-1}|z_0|^3=o(|z_0|^2)
\end{equation} where the assumption on the initial condition $|z_0|\ll \delta_{0}=\cO(\delta_{\infty})$ was used.

By the estimate of $R_{\geq 4}$ in ~\eqref{eq:estR4} and $|z(t)|$ in ~\eqref{eq:uppZ}
$$
\begin{array}{lll}
& &\int_{0}^{\infty} \delta_{\infty}|z(s)|\|\langle x\rangle^{-4 }R_{\geq 4}(s)\|_{2}\ ds\\
&\lesssim & \delta_{\infty}|z_{0}|^{2}\int_{0}^{\infty}(1+s)^{-\frac{3}{2}}(|z_{0}|^{-2}+\delta_{\infty}^{2} s)^{-\frac{1}{2}}\ ds+
\delta_{\infty}^2|z_{0}|^{2}\int_{0}^{\infty}(|z_{0}|^{-2}+\delta_{\infty}^{2} s)^{-\frac{3}{2}}\ ds\\
&=&o(|z_0|^2).
\end{array}
$$
The third term can be similarly estimated.
Assembling the above estimates yields $$\int_{0}^{\infty}|\Psi(t)|\ dt=o(|z_0|^2).$$

To prove ~\eqref{eq:KeyTerm3} we use the equations for $\dot{z}$ and $\dot\lambda$ in ~\eqref{eq:ZNequation} and ~\eqref{eq:estJunk} to find that if $m+n=4,5$ and $m\not=n$ then
$$|\partial_{\lambda}\Pi_{m,n}||\dot\lambda(s)|+|\partial_{z}\Pi_{m,n}||\dot{z}+iE(\lambda)z|(s)
\lesssim |z||\dot\lambda|+ |z|^6+|z|^3|\mathcal{K}|.
$$  Using the estimates in ~\eqref{eq:ref1} and ~\eqref{eq:ref2} for $\mathcal{K}$, and the estimate ~\eqref{eq:estJunk} and similar techniques above we prove ~\eqref{eq:KeyTerm3}. This is  straightforward, but tedious, hence we omit the details.
\end{proof}
\begin{remark}\label{remark:remark3}
%

In the last step of ~\eqref{eq:estPsi} we used $|z_0|\ll \delta_\infty$ to control $\int_{0}^{\infty}\delta_{\infty}|z|^5(s)\ ds.$ If $\sigma=1$ this can be relaxed to $|z_{0}|\leq \|\phi^{\lambda_0}\|_2=\mathcal{O}( \delta_{\infty})$ by inspecting closely the terms forming $\delta_{\infty}|z|^5$. The term actually is a part of $\left\langle JN_{\geq 5},\left(
\begin{array}{lll}
\phi^{\lambda}\\
0
\end{array}
\right)\right\rangle $, and can be written as $\displaystyle\sum_{m+n=5}K_{m,n}$ for some properly defined $K_{m,n}$. To evaluate $\displaystyle\sum_{m+n=5}\int_{0}^{\infty} K_{m,n}(s)\ ds$ we observe that $K_{m,n},\ m+n=5,$ are ``almost periodic" as $\Lambda_{m,n}$ of ~\eqref{eq:small}. Hence by integrating by parts as in the proof of ~\eqref{eq:small} it is easy to obtain the desired estimate $$\sum_{m+n=5}\int_{0}^{\infty} K_{m,n}(s)\ ds=o(|z_0|^2).$$

Note that the terms $K_{m,n},\ m+n=5,$ may not be well defined if $\sigma\not\in \mathbb{N}.$
\end{remark}

\section{Extension to the case of nearly degenerate neutral modes}\label{SEC:summary}
In ~\cite{GaWe} and the main part of the present paper we have proved that if the neutral modes are degenerate and their eigenvalues are sufficiently close to the essential spectrum then the ground state is asymptotically stable and its mass will grow by half of that of the neutral modes.

 In what follows we extend the results to the cases where the neutral modes are nearly degenerate, {\it i.e.}  a cluster of approximately equal eigenfrequencies. For technical simplicity, we consider the case of  cubic nonlinearity, $\sigma=1$. The main result is Theorem ~\ref{THM:main3} below. The key ideas of the proof will be presented after its statement.
\subsection{New assumptions on the spectrum and definition of FGR}
  As in Subsection ~\ref{Vassumptions}
we assume that the linear operator $-\Delta+V$ has
the following properties:
\begin{enumerate}
\item[(V1)]
$V$ is real valued and decays sufficiently rapidly, {\it e.g.} exponentially,  as $|x|$ tends to infinity.\\
\item[(V2)] The linear operator $-\Delta+V$ has $N+1$ (counting multiplicity if degenerate) eigenvalues $e_{0}, \ e_{k},\ k=1,2,\cdot,N,$ with $e_{0}<e_{k},$\\
 $e_{0}$ is the lowest eigenvalue with
ground state $\phi_{lin}>0$, the eigenvalues $\{e_{k}\}_{k=1}^{N}$ are possibly degenerate
with eigenvectors
$\xi_{1}^{lin},\xi_{2}^{lin},\cdot\cdot\cdot,\xi_{N}^{lin}.$
\item[(V3)] Moreover, for any $k=1,2,\cdots, N$
 we assume
 \begin{equation}  2e_k-e_0>0.
 \end{equation}
\end{enumerate}
Then the nonlinear equation ~\eqref{eq:NLS} admits a family of ground states solution $e^{i\lambda t}\phi^{\lambda}$ with properties as described in Proposition ~\ref{bif-of-gs}. The linearized operators about the ground states,  $L(\lambda)$,  takes the same form as in ~\eqref{eq:opera}. The excited states of $-\Delta+V$ bifurcate to the neutral modes $\left(
\begin{array}{lll}
\xi_k\\
\pm i\eta_k
\end{array}
\right)$ of $L(\lambda)$ with eigenvalues $\pm i E_{k}(\lambda),\ k=1,\cdots, N.$ The ground states $\phi^{\lambda}$ and neutral modes satisfy all the estimates in Lemma ~\ref{LM:NearLinear} and the estimates ~\eqref{eq:LambdaPhi2}-~\eqref{eq:asympto}.

Assumption {\bf (SA)} on the spectrum of $L(\lambda)$ is generalized, in the case where near-degeneracy is admitted, as
\begin{enumerate}
 \item[{\bf (SA)}] The discrete spectrum of the linearized operator $L(\lambda)$ consists of:
 the  eigenvalue $0$
    with
generalized eigenvectors $\left(
\begin{array}{lll}
0\\
\phi^{\lambda}
\end{array}
\right)$ and $\left(
\begin{array}{lll}
\partial_{\lambda}\phi^{\lambda}\\
0
\end{array}
\right)$ and eigenvalues $\pm iE_k (\lambda),\ E_k(\lambda)>0,\ k=1,2,\cdots,N$.
\end{enumerate}

A consequence of nonzero neutral mode frequency-differences is a  slightly different system for the neutral mode amplitudes, $z(t)$. The solution $\psi(t)$ is decomposed as in ~\eqref{eq:decom}. Following the  same procedure as in ~\cite{GaWe}, we derive
\begin{equation}
\partial_{t}z =-iE(\lambda) z -\Gamma( z ,\bar{ z }) z +\Lambda( z ,\bar{ z })
z\ +\cdots
\end{equation}
where $E(\lambda)=Diag[E_{1}(\lambda),\cdots,\ E_{n}(\lambda)]$ is a diagonal $N\times N$ matrix, $\Gamma$ is symmetric and $\Lambda$ is skew symmetric.

We now describe the matrix $\Gamma$, which  takes a different form from the degenerate case:
Define vector functions $G(k,m),\ k,m=1,2,\cdot\cdot\cdot, N,$ as
\begin{equation}
G(k,m):=\left(
\begin{array}{lll}
B(k,m)\\
D(k,m)
\end{array}
\right)
\end{equation} with the functions $B(k,m)$ and $D(k,m)$ defined as $$\begin{array}{lll}
B(k,m)&:=&-i\phi^{\lambda} \ \ \left[\ z_m \xi_{m}\ \eta_{k}+z_m\eta_m\ \xi_{k}\ \right]\ , \\
D(k,m)&:=&-\phi^{\lambda}
\left[\ 3z_m\xi_m\ \xi_{k}-z_m\eta_m\ \eta_{k}\ \right]\ .
\end{array}$$
In terms of the column 2-vector, $G(k,m)$, we define
a $N \times N$ matrix $Z(z,\bar{z})$ as
\begin{equation}
Z(z,\bar{z})=(Z^{(k,l)}(z,\bar{z})),\ \ 1\le k,l\le N
\end{equation} and
\begin{equation}
Z^{(k,l)}\ =\ -\left\langle
\sum_{m=1}^{N}(L(\lambda)+iE_l (\lambda)+iE_{m}(\lambda)-0)^{-1}P_{c}G(l,m), iJP_c\sum_{m=1}^{N}G(k,m)\right\rangle
\end{equation}
Finally, we define $\Gamma(z,\bar{z})$ as follows:
\begin{equation}
\Gamma(z,\bar{z})\ :=\ \frac{1}{2}[Z(z,\bar{z})+Z^{*}(z,\bar{z})].
\end{equation}

We shall require the following Fermi Golden Rule hypothesis: Let $P_{c}^{lin}$ be the projection onto the essential spectrum of $-\Delta+V$ then
\begin{enumerate}
\item[{\bf (FGR)}] We assume there exists a constant $C>0$ such that $$-Re\langle i[-\Delta+V+\lambda-2E_1(\lambda)-i0]^{-1}P_{c}^{lin}\phi_{lin}(z\cdot \xi^{lin})^{2},\phi_{lin}(z\cdot \xi^{lin})^{2}\rangle\ge C|z|^4$$ for any $z\in \mathbb{C}^{N}.$
\end{enumerate} The assumption FGR implies that there exist constants $C_1>0$ and $\delta_0>0$ such that if $\sup_{k,l}|E_{k}(\lambda)-E_{l}(\lambda)|\leq \delta_{0}$ then for any $z\in \mathbb{C}^{N}$
\begin{equation}\label{eq:FGR2}
z^*\ \Gamma(z,\bar{z})\ z\geq C_1\|\phi^{\lambda}\|_{\infty}^{2} |z|^{4}.
\end{equation}

We now introduce the leading order contribution to $\Gamma(z,\bar{z})$.
 For each fixed $z$ we use the fact $|\lambda+e_0|$ being small and use ~\eqref{eqn:perturb} and ~\eqref{eq:GoToNear} to find the leading term in $z^*\ \Gamma(z,\bar{z})\ z$ is $z^* \Gamma_0(z,\bar{z})\ z$ defined as
\begin{equation}\label{eq:Gamma2}
\begin{array}{lll}
& &z^* \Gamma_0(z,\bar{z})\ z\\
&=&-
8\delta^{2}(\lambda)\Re\langle i\displaystyle\sum_{m,n\leq N}[-\Delta+V+\lambda-E_m(\lambda)-E_n-i0]^{-1}P_{c}^{lin}\phi^{lin}(z_m \xi_{m}^{lin})(z_n \xi_n^{lin}),
\phi^{lin}(z\cdot\xi^{lin})^{2}\rangle.
\end{array}
\end{equation}
\subsection{Main Theorem in nearly degenerate case  and strategy of proof}
 Recall that we only consider the case $\sigma=1$, i.e. the cubic nonlinearity.
\begin{theorem}\label{THM:main3}
There exists a constant $\delta_0$ independent of the initial condition $\psi_0$ of ~\eqref{eq:NLS} such that if $\displaystyle\max_{k,l}|E_k(\lambda)-E_l(\lambda)|\leq \delta_0$ then all the results in Theorem ~\ref{THM:MassTransfer} hold with $ z^* \Gamma_0(z,\bar{z})\ z$ replaced by the expression in ~\eqref{eq:Gamma2}. Moreover all the remainder estimates in ~\eqref{eq:IncreaseLambda}-~\eqref{eq:Mass} hold and are independent of the size of $\delta_0.$
\end{theorem}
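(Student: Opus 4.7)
The plan is to follow the same two-part strategy laid out in the proof of Theorem \ref{THM:MassTransfer}, carefully tracking the modifications forced by replacing the scalar frequency $E(\lambda)$ by the diagonal matrix $E(\lambda) = \mathrm{diag}(E_1(\lambda),\dots,E_N(\lambda))$. First, I would repeat the decomposition of Appendix A: write $\psi$ as in \eqref{Decom} with the same choice of near-identity transformations $a_1, a_2, p, q$, but with the near-identity generator $z \mapsto -iE(\lambda) z$ now understood componentwise. Derive the resulting ODE system for $(z,\lambda,\gamma,\vec R)$; the only structural change is that the matrix $\Gamma(z,\bar z)$ takes the form \eqref{eq:zMatrix}--\eqref{Gammadef} with the modified vector functions $G(k,m)$ and the resolvent $(L(\lambda)+iE_k(\lambda)+iE_m(\lambda)-0)^{-1}$ carrying the pair of indices $(k,m)$ rather than a single doubled eigenvalue $2iE(\lambda)$.

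Next, I would establish that $\Gamma(z,\bar z)$ remains positive definite uniformly in $\delta_0$. When $\delta_0 = 0$ the expression collapses to the degenerate one, whose positivity follows from {\bf (FGR)}; for $\delta_0$ sufficiently small the remainder is a perturbation of size $O(\delta_0)$ in the resolvent, controlled by the spectral distance of $\{E_k+E_m\}$ from the continuous spectrum which remains uniformly positive thanks to $(V3)$. Consequently \eqref{eq:FGR2} holds, and the leading order $z^*\Gamma_0(z,\bar z) z$ of \eqref{eq:Gamma2} is extracted by the same $\delta(\lambda)\to 0$ expansion as in \eqref{eq:Gamma0}. With positive definiteness of $\Gamma$ in hand, the bound \eqref{eq:uppZ} on $|z(t)|$ and all the estimates of Proposition \ref{prop:useful} transfer essentially verbatim, because they rest only on the scalar identity $\tfrac{d}{dt}|z|^2 = -2 z^*\Gamma z + \cdots$ together with the same remainder bounds \eqref{Rem-ests}, none of which reference the scalar character of $E(\lambda)$.

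Then I would re-run the derivations of $S_z$ and $S_\lambda$, and the proofs of Theorems \ref{THM:Zequation} and \ref{THM:KeyTerm}. The new input is the third bullet in the proof of Lemma \ref{LM:junk}: the almost-periodicity integration-by-parts in $\Pi_{m,n}$ for $m+n=4,5, m\ne n$. In the near-degenerate setting, a monomial $z^a\bar z^b$ with $|a|=m,|b|=n$ satisfies $\partial_t(z^a\bar z^b) = -i\bigl(\sum_k a_k E_k(\lambda)-\sum_k b_k E_k(\lambda)\bigr) z^a\bar z^b + \cdots$. Writing $\omega_{a,b}(\lambda) := \sum_k(a_k-b_k)E_k(\lambda)$, a common central frequency $E_*(\lambda) := \tfrac{1}{N}\sum_k E_k(\lambda)$ gives $\omega_{a,b} = (m-n)E_*(\lambda) + O(\delta_0)$. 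Since $m-n\in\{\pm1,\pm2,\pm3\}$ for the terms in question and $E_*(\lambda)$ is bounded below (we are in the bifurcation regime where $E_*(\lambda)\to e_1-e_0>0$), $|\omega_{a,b}(\lambda)|$ is bounded below uniformly in $\delta_0$ for $\delta_0$ small. Hence $\tfrac{1}{\omega_{a,b}(\lambda)}$ stays bounded, the integration by parts succeeds, and the boundary term contributes the $o(|z_0|^2)$ correction as before.

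The main obstacle is precisely the uniformity-in-$\delta_0$ statement in the theorem: one must verify that at no point in the analysis does a small denominator $\sim\delta_0$ appear. This requires checking that the only monomials arising in $\Pi_{m,n}$ (for $m\ne n$) are those with $|m|\ne|n|$, so that $\omega_{a,b}\approx (m-n)E_*$; monomials with $|a|=|b|$ but $a\ne b$ would oscillate at frequency $O(\delta_0)$ and are therefore folded into the diagonal/dissipative part $z^*\Gamma_0(z,\bar z) z$ of \eqref{eq:Gamma2} (they are exactly the terms that make $\Gamma_0$ a matrix rather than a scalar multiple of the identity). Once this separation is made rigorous, the remainder of the argument from Sections \ref{sec:ProveMainTHM} transfers mechanically, yielding \eqref{eq:IncreaseLambda}--\eqref{eq:Mass} with the new $\Gamma_0$ of \eqref{eq:Gamma2} and with all error terms independent of $\delta_0$.
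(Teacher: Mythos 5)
Your roadmap correctly identifies the danger point --- monomials $z^a\bar z^b$ with $|a|=|b|$ but $a\ne b$ oscillate at a frequency of size $O(\delta_0)=O(|E_1-E_2|)$, so a naive normal-form removal or integration by parts would produce a $1/\delta_0$ small divisor, ruining the claimed uniformity. But your proposed resolution of this is not correct: you say these slowly-oscillating monomials are ``folded into the diagonal/dissipative part $z^*\Gamma_0(z,\bar z)z$,'' and that they are ``exactly the terms that make $\Gamma_0$ a matrix rather than a scalar.'' That conflates two unrelated things. The off-diagonal structure of $\Gamma_0$ in \eqref{eq:Gamma2} comes from the resonant (Fermi Golden Rule) contribution and is present even in the exactly degenerate case; it has nothing to do with the new terms that appear when degeneracy is broken. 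What actually appears in the near-degenerate case is a collection of terms that \emph{vanished identically} when $E_1=E_2$ --- e.g.\ $\langle \mathrm{Im}\,N_{1,1},\phi^{\lambda}\rangle$, which was zero by Lemma~9.4 of \cite{GaWe}, and the analogue $D$ of $H_{2,2}$ in \eqref{eq:H22}, which was killed by self-adjointness of the resolvent. These do not sit inside $\Gamma_0$; they must be estimated or removed separately.

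The missing idea is the one the paper's proof hinges on: each of these newly non-vanishing quantities carries an \emph{explicit factor} of $E_1(\lambda)-E_2(\lambda)$ in its coefficient. One sees this by direct computation, as in \eqref{eq:IMN11}, where $\langle \mathrm{Im}\,N_{1,1},\phi^\lambda\rangle = \tfrac{1}{4i}[E_1-E_2][z_1\bar z_2 - z_2\bar z_1][\langle\eta_1,\eta_2\rangle+\langle\xi_1,\xi_2\rangle]$, and again in \eqref{eq:realPart}, where $D(m,n,m',n')+D(m',n',m,n) = -[m-m'][E_1-E_2]\,\Re H$. Since $z_1\bar z_2$ oscillates at precisely frequency $E_1-E_2$ (and $z_1^m z_2^n\bar z_1^{m'}\bar z_2^{n'}$ with $m+n=m'+n'$ at $(m-m')(E_1-E_2)$), the $E_1-E_2$ in the numerator cancels the $E_1-E_2$ that the normal-form or integration-by-parts step would otherwise put in the denominator. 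This is how the new $A_{1,1}^{(1)}$ in \eqref{eq:A11} stays $O(\|\phi^\lambda\|_2^2|z|^2)$ uniformly in $\delta_0$, and how the bound \eqref{eq:switch} is obtained. Without exhibiting this explicit cancellation you cannot conclude the $\delta_0$-independence of the remainder estimates; asserting that the problematic monomials disappear into $\Gamma_0$ is where your argument breaks.
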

In the next we show how to recover all the estimates. To simplify the treatment we only consider the case $N=2$ with eigenfrequencies $E_1(\lambda)$ and $E_2(\lambda).$

There are some differences between the degenerate and the nearly degenerate cases. Among them, the most outstanding one are terms, which previously vanished identically, which now need to be estimated.  These terms  include, for example,  $\langle ImN_{1,1},\phi^{\lambda}\rangle$, which was proved to be zero in  ~\cite{GaWe} Lemma 9.4, p. 291 which we see below is non-zero in the nearly degenerate case. To treat such terms, the key observation is that these terms have a factor $E_1(\lambda)-E_2(\lambda)$ in their coefficient  enabling  us to  re-express $[E_1(\lambda)-E_2(\lambda)]z_1 \bar{z}_2$ as $-i\frac{d}{dt}z_1 \bar{z}_2+o(|z|^4)$. Thus, these terms can be removed via integration by parts and a redefinition of the normal form transformation.
\subsection{Normal Form Transformation and Asymptotic Stability of Ground States}
We decompose the initial condition in exactly the same way as in ~\eqref{eq:decom}. All equations ~\eqref{eq:decom}-~\eqref{eq:Gamma11}, ~\eqref{eq:gamma} and ~\eqref{eq:lambda} hold. The equations for $\dot{z}$ are slightly different since  $z_j$ each have  different associated frequencies. Consequently instead of ~\eqref{eq:z1} and ~\eqref{eq:z2} we have $$\partial_{t}(\alpha_{n}+p_{n})-E_n(\lambda)(\beta_{n}+q_{n})+\cdots,\ \ \
\partial_{t}(\beta_{n}+q_{n})+E_n(\lambda)(\alpha_{n}+p_{n})+\cdots,$$
requiring a different near-indentity / normal form transformation.

To illustrate the main difference in the calculation we study the equation for $\dot\lambda.$ Recall that the function $\dot\lambda$ satisfies the equation $$\dot\lambda+\partial_{t}a_1=-\frac{1}{\langle \phi^{\lambda},\partial_{\lambda}\phi^{\lambda}\rangle}\langle ImN(\vec{R},z),\phi^{\lambda}\rangle+\cdots$$ and we want to remove the second and third order terms in $z$ and $\bar{z}$ from the equation by defining some polynomial $a_1$ in $z$ and $\bar{z}$: $$a_1:=\sum_{m+n=2,3}A_{m,n}^{(1)}.$$ In the degenerate case we set $A_{1,1}^{(1)}=0$ (see ~\eqref{eq:pkmn}) due to the fact $\langle ImN_{1,1},\phi^{\lambda}\rangle=0.$ When the latter no longer holds $A_{1,1}^{(1)}$ has to be redefined. Following steps in ~\cite{GaWe}, p. 291, we use the fact $\left(
\begin{array}{lll}
\xi_{n}\\
i\eta_{n}
\end{array}
\right),\ n=1,2,$ are eigenvectors of $L(\lambda)$ to obtain
\begin{equation}\label{eq:IMN11}
\begin{array}{lll}
\langle ImN_{1,1},\phi^{\lambda}\rangle&:=&\frac{1}{2i}\displaystyle\sum_{n=1}^{2}\sum_{m=1}^{2}\bar{z}_{n}z_{m}\int (\phi^{\lambda})^2 (\xi_n\eta_m-\xi_m\eta_n)\\
&=&\frac{1}{4i}\displaystyle\sum_{n=1}^{2}\sum_{m=1}^{2}\bar{z}_{n}z_{m}[\langle (L_{-}-L_{+})\xi_n,\eta_m\rangle-\langle (L_{-}-L_{+})\xi_m,\eta_{n}\rangle]\\
&=&\frac{1}{4i}[E_1(\lambda)-E_2(\lambda)][z_1\bar{z}_2-z_2\bar{z}_1][\langle \eta_1,\eta_2\rangle+\langle \xi_1,\xi_2\rangle].
\end{array}
\end{equation}
To remove ~\eqref{eq:IMN11} from the equation of $\dot\lambda$ we define
\begin{align}\label{eq:A11}
A_{1,1}^{(1)}&:=&-\frac{1}{4\langle \phi^{\lambda},\partial_{\lambda}\phi^{\lambda}\rangle}[z_1\bar{z}_2+z_2\bar{z}_1][\langle \eta_1,\eta_2\rangle+\langle \xi_1,\xi_2\rangle]\
= \mathcal{O}\left(\|\phi^{\lambda}\|_2^{2}\ |z|^2\right),
\end{align}
where in the last step the estimate \eqref{eq:asympto} and the fact $\xi_1^{lin}\perp \xi_{2}^{lin}$ are used.

For the other terms in $a_1$ we only re-define $A_{2,0}^{(1)}$ to illustrate the differences: Decompose $\frac{1}{\langle \phi^{\lambda},\partial_{\lambda}\phi^{\lambda}\rangle}\langle ImN_{2,0},\phi^{\lambda}\rangle$ as $K_1 z_1^2+K_2 z_1 z_2+K_3 z_2^2,$ then instead of the definition in ~\eqref{eq:A1} we define $$A_{2,0}^{(1)}=-\frac{i}{2E_1(\lambda)}K_1 z_1^2-\frac{i}{E_1(\lambda)+E_2(\lambda)}K_2 z_1z_2-\frac{i}{2E_2(\lambda)}K_2 z_2^2.$$

The new normal forms enable the proof of asymptotic stability of the ground states to go through,  as well as  all results in Section ~\ref{sec:CorrectionNormal}, {\it i.e.} all the statements in Theorem ~\ref{THM:MassTransfer} except (A) and (D), which we discuss in the next subsection.

\subsection{Equipartition of Energy}
In this subsection we recover the Statements (A) and (D). Most of the arguments proved in the degenerate regime still hold. As presented above certain newly-nonzero terms enter different places. In what follows we present the strategy to handle such terms.

To illustrate the idea we only study one term whose counterpart is $H_{2,2}$ in ~\eqref{eq:H22}
$$D:=\sum_{
\begin{subarray}{lll}
m+n=2\\
m'+n'=2
\end{subarray}}D(m,n,m',n')
$$ with $D(m,n,m',n')$ being a real function:
$$D(m,n,m',n'):=Re\langle i(-\Delta+V+\lambda+m E_1(\lambda)+n E_2(\lambda))^{-1}P_{c}^{lin}\phi^{\lambda}(z_1 \xi_1)^{m} (z_2 \xi_2)^{n}, \phi^{\lambda}(z_1\xi_1)^{m'}(z_2\xi_2)^{n'}\rangle.$$ If $E_1(\lambda)=E_{2}(\lambda)$ then we use the observation in ~\eqref{eq:H22} to prove $$
D(m,n,m',n')=\overline{D(m,n,m',n')}=-D(m',n',m,n)\ \text{implies}\ D=0.
$$ When $E_1(\lambda)\not=E_2(\lambda)$ we use the following result to recover the desired estimate
\begin{lemma}
\begin{equation}\label{eq:switch}
\int_{0}^{\infty}D(s)\ ds=o(|z_0|^2).
\end{equation}
\end{lemma}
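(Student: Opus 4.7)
The strategy, following the hint in the paragraph preceding the lemma, is to combine the near-cancellation inherited from the degenerate case with an integration by parts against the oscillation in the $z$-monomials. First I split $D = D_{\text{diag}} + D_{\text{off}}$, where $D_{\text{diag}}$ collects the three terms with $(m,n) = (m',n')$ and $D_{\text{off}}$ the remaining six ordered pairs. For the diagonal part, the resolvent argument $A_{m,n} := -\Delta + V + \lambda + mE_1(\lambda) + nE_2(\lambda)$ is identical in both slots of the inner product regardless of whether $E_1(\lambda) = E_2(\lambda)$, so the antisymmetry identity $D(m,n,m',n') = -D(m',n',m,n)$ used in the degenerate case applies pointwise and gives $D_{\text{diag}} \equiv 0$.

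For each off-diagonal pair I apply the second resolvent identity
\begin{equation*}
A_{m,n}^{-1} - A_{m',n'}^{-1} = (m'-m)\big(E_1(\lambda) - E_2(\lambda)\big)\, A_{m,n}^{-1}A_{m',n'}^{-1},
\end{equation*}
writing $D(m,n,m',n')+D(m',n',m,n)$ as a ``common-denominator'' piece (to which the degenerate-case antisymmetry applies, giving zero) plus a residual of the form
\begin{equation*}
(m-m')\big(E_1(\lambda) - E_2(\lambda)\big)\, \mathrm{Re}\Big[c_{m,n,m',n'}(\lambda)\, z_1^{m'}z_2^{n'}\bar z_1^{m}\bar z_2^{n}\Big],
\end{equation*}
with $c_{m,n,m',n'}$ a smooth bounded coefficient built from $\phi^\lambda$, $\xi_k$, and the two resolvents acting on the range of $P_c^{lin}$. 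The factor $(E_1 - E_2)$ is then traded for a time derivative: from $\dot z_j = -iE_j(\lambda)z_j + \mathcal{O}(|z|^3)$ one computes
\begin{equation*}
\frac{d}{dt}\big[z_1^{m'}z_2^{n'}\bar z_1^{m}\bar z_2^{n}\big] = -i(m'-m)\big(E_1(\lambda) - E_2(\lambda)\big)\, z_1^{m'}z_2^{n'}\bar z_1^{m}\bar z_2^{n} + \mathcal{O}(|z|^6),
\end{equation*}
so the residual equals $-i\frac{d}{dt}\big[c_{m,n,m',n'}(\lambda)\, z_1^{m'}z_2^{n'}\bar z_1^{m}\bar z_2^{n}\big]$ modulo controllable corrections, uniformly in $|E_1 - E_2|$.

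Integrating $D_{\text{off}}$ by parts in $s$ on $[0,\infty)$: the boundary at $s = \infty$ vanishes by $|z(s)| \to 0$; the boundary at $s = 0$ is $\mathcal{O}(|z_0|^4) = o(|z_0|^2)$; and the interior terms come either from $\dot\lambda\,\partial_\lambda c_{m,n,m',n'}$, integrable as $\mathcal{O}(|z_0|^6)$ via $|\dot\lambda| \lesssim |z|^4 + \cdots$ from Proposition~\ref{prop:useful}, or from the $\mathcal{O}(|z|^6)$ remainder above, integrable as $\mathcal{O}(|z_0|^4)$ via the majorant $|z(s)|^2 \lesssim (|z_0|^{-2} + \delta_\infty^2 s)^{-1}$ and $\int_0^\infty |z(s)|^6\, ds \lesssim |z_0|^4$. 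All are $o(|z_0|^2)$.

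The main obstacle is ensuring that this procedure is uniform in $|E_1 - E_2|$: the non-cancellation coefficient and the oscillation frequency both vanish in the degenerate limit at exactly the same rate $E_1 - E_2$, so their ratio remains $\mathcal{O}(1)$ and the argument specializes smoothly to the (trivially zero) degenerate case. One also needs to verify that the product $A_{m,n}^{-1}A_{m',n'}^{-1}$ with the prescribed $-i0$ regularizations defines a bounded operator on the range of $P_c^{lin}$ uniformly in $\lambda$, so that $c_{m,n,m',n'}$ is genuinely bounded; this follows from the absence-of-threshold-resonances hypothesis (Thresh$_\lambda$) and the smooth dependence of $\phi^\lambda,\,\xi_k,\,\eta_k$ on $\lambda$ recorded in Lemma~\ref{LM:NearLinear}.
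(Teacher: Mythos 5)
Your proposal follows the same route as the paper: extract the factor $(E_1-E_2)$ from $D(m,n,m',n')+D(m',n',m,n)$ via the second resolvent identity (the paper phrases this as the commutator of the two resolvents acting on $\Re H$, you phrase it as a ``common-denominator plus residual'' decomposition — these are the same algebraic step), observe the diagonal pairs vanish, and then trade $(E_1-E_2)$ for a time derivative of the monomial $z_1^{m'}z_2^{n'}\bar z_1^{m}\bar z_2^{n}$ and integrate by parts. One small bookkeeping slip worth noting: the nonlinear correction to $\frac{d}{dt}\bigl[z_1^{m'}z_2^{n'}\bar z_1^{m}\bar z_2^{n}\bigr]$ coming from $Z_{2,1}=-\Gamma z+\Lambda z$ is $\mathcal{O}(\delta_\infty^{2}|z|^6)$, not $\mathcal{O}(|z|^6)$, and correspondingly $\int_0^\infty |z|^6\,ds\lesssim \delta_\infty^{-2}|z_0|^4$ rather than $\lesssim|z_0|^4$; the two factors compensate and the final estimate $o(|z_0|^2)$ survives (this is also why the paper's remainder carries the explicit $\|\phi^\lambda\|_2^2$ weight), but as written your displayed bounds are individually off by $\delta_\infty^{\pm 2}$.
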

\begin{proof}
The facts that $D(m',n',m,n)$ is real and $(-\Delta+V+\lambda+m' E_1(\lambda)+n' E_2(\lambda))^{-1}$ is self-adjoint imply
$$
\begin{array}{lll}
D(m',n',m,n)&=&\overline{D(m',n',m,n,)}\\
&=&-Re\langle i(-\Delta+V+\lambda+m' E_1(\lambda)+n' E_2(\lambda))^{-1}P_{c}^{lin}\times\\
& &\phi^{\lambda}(z_1 \xi_1)^{m} (z_2 \xi_2)^{n}, \phi^{\lambda}(z_1\xi_1)^{m'}(z_2\xi_2)^{n'}\rangle.
\end{array}
$$
The crucial step is to find the presence of $E_1(\lambda)-E_2(\lambda)$ in the coefficient:
\begin{equation}\label{eq:realPart}
\begin{array}{lll}
D(m,n,m',n')+D(m',n',m,n)
&=&-[(m-m')E_1(\lambda)+(n-n')E_2(\lambda)] Re H\\
&=&-[m-m'][E_{1}(\lambda)-E_{2}(\lambda)]ReH
\end{array}
\end{equation} where $H$ is defined as
$$
\begin{array}{lll}
H&:=&\langle i[-\Delta+V+\lambda+m' E_1(\lambda)+n' E_2(\lambda)]^{-1}[-\Delta+V+\lambda+m E_1(\lambda)+n E_2(\lambda)]^{-1}\times\\
& &P_{c}^{lin}\phi^{\lambda}(z_1 \xi_1)^{m} (z_2 \xi_2)^{n}, \phi^{\lambda}(z_1\xi_1)^{m'}(z_2\xi_2)^{n'}\rangle,
\end{array}
$$ and in the last step the fact $m+n=m'+n'=2$ is used.

~\eqref{eq:realPart} enables us to use the same trick as in ~\eqref{eq:small}, namely integration by parts, to obtain the desired estimate
\begin{align}
&\int_0^{\infty}D(m,n,m',n')+D(m',n',m,n) ds\\
&=\int_{0}^{\infty}\frac{d}{ds}\ \Re\ \left( iH\ \right)\ ds+\int_0^{\infty} \mathcal{O}(|\dot\lambda||z|^4+\|\phi^{\lambda}\|_{2}^{2}|z|^6)\ ds
+o(|z_0|^2).
\end{align}
The proof is complete.
\end{proof}

\nit In summary, as outlined above,  all the estimates obtained in the degenerate can be proved in the nearly degenerate case.

\appendix
\section{Decomposition of the solution $\psi$}\label{sec:decom}
This section is based upon ~\cite{GaWe}, pp. 286-287. As stated in ~\eqref{Decom}, for any time $t$ the solution $\psi(x,t)$ can be decomposed as
\begin{equation}\label{eq:decom}
\begin{array}{lll}
\psi(x,t)&=&e^{i\gamma(t)}e^{i\int_0^t \lambda(s)ds}[\phi^{\lambda(t)}(x)+a_{1}(t)\partial_{\lambda}\phi^{\lambda(t)}(x)+ia_2(t) \phi^{\lambda(t)}(x)\\
 & &+\displaystyle\sum_{n=1}^N [\alpha_n(t)+p_n(t)]\xi_n^{\lambda(t)}(x)+i\sum_{n=1}^N [\beta_n(t)+q_n(t)]\eta_n^{\lambda(t)}(x)+R(x,t)]
\end{array}
\end{equation} for some polynomials $a_1,\ a_2,\ p_n,\ q_n$ (will be defined explicitly in Appendix ~\ref{sec:NormalFormExp}) and the function $R$ satisfies the symplectic orthogonality conditions ~\eqref{s-Rorthogonal}. By this $\vec{R}:=\left(
 \begin{array}{lll}
 R_1\\
 R_2
 \end{array}
 \right):=\left(
\begin{array}{lll}
ReR\\
ImR
\end{array}
\right)\in P_{c}(L(\lambda)) L^2$ satisfies the equation
\begin{equation}\label{RAfProj}
\frac{d}{dt}\vec{R}\ =\
 L( \lambda(t) )\vec{R}\ -\ P_{c}^{\lambda(t)}J\vec{N}(\Vec{R},z)\ +\ L_{(\dot\lambda,\dot\gamma)}\vec{R}\ +\ \mathcal{G}.
\end{equation}
Here,
\begin{align}
J\vec{N}(\Vec{R},z)\ &:=\ \left(
\begin{array}{lll}
ImN(\Vec{R},z)\\
-ReN(\Vec{R},z)
\end{array}
\right),\label{JvecN}\\
ImN(\Vec{R},z)&:=
|\phi^{\lambda}+I_{1}+iI_{2}|^{2\sigma}I_{2}-(\phi^{\lambda})^{2\sigma}I_{2},\nn\\
Re N(\Vec{R},z)&:=
[|\phi^{\lambda}+I_{1}+iI_{2}|^{2\sigma}-(\phi^{\lambda})^{2\sigma}](\phi^{\lambda}+I_{1})
-2\sigma (\phi^{\lambda})^{2\sigma}I_{1},\nn\\
I_{1}\ &:=\ \alpha\cdot\xi+a_{1}\partial_{\lambda}\phi^{\lambda}+
p\cdot\xi+R_{1},\nn \\
I_{2}\ &:=\ \beta\cdot \eta\ +\ a_{2}\phi^{\lambda}\ +\ q\cdot\eta+\ R_{2}\nn .
\nn\end{align}
The operator $L_{(\dot\lambda,\dot\gamma)}$ and the vector function $\mathcal{G}$ are defined
as
\begin{align}
& L_{(\dot\lambda,\dot\gamma)}\ :=\ \dot\lambda
(\partial_\lambda P_{c}^{\lambda(t)})+\dot\gamma P_{c}^{\lambda(t)}J,\label{Lgdld}\\
& \mathcal{G}\ :=\ P_{c}^{\lambda(t)}\left(
\begin{array}{lll}
[\dot\gamma-\Upsilon_{1,1}] (\beta+q)\cdot\eta-\dot{\lambda}a_{1}
\partial_{\lambda}^{2}\phi^{\lambda}
-\dot\lambda (\alpha+p)\cdot\partial_{\lambda}\xi\\
-[\dot\gamma-\Upsilon_{1,1}] (\alpha+p)\cdot\xi-\dot{\lambda}a_{2}\phi^{\lambda}_{\lambda}-\dot\lambda
(\beta+q)\cdot\partial_{\lambda}\eta
\end{array}
\right)+\Upsilon_{1,1}P_{c}^{\lambda(t)}\left(
\begin{array}{lll}
(\beta+q)\cdot\eta\\
-(\alpha+p)\cdot\xi
\end{array}
\right)\label{Gdef}
\end{align} where, $\Upsilon_{1,1}$ is defined as
\begin{equation}\label{eq:Gamma11}
\Upsilon_{1,1}:=\frac{\langle
(\phi^{\lambda})^{2\sigma-1}[(2\sigma^2+\sigma)|z\cdot\xi|^2+\sigma |z\cdot\eta|^2],\partial_{\lambda}\phi^{\lambda}\rangle}{2\langle\phi^{\lambda},\partial_{\lambda}\phi^{\lambda}\rangle}
\end{equation} with $$z:=(z_{1},\cdots, z_{N})^{T},\ z_{n}:=\alpha_{n}+i\beta_{n},\ n=1,\cdots, N,$$ and $$\xi:=(\xi_{1},\cdots,\xi_{N})^{T},\ \eta:=(\eta_{1},\cdots,\eta_{N})^{T}.$$

By the orthogonality conditions ~\eqref{s-Rorthogonal} and ~\eqref{eq:Orthogonality}  we derive equations for $
\dot\lambda,\ \dot\gamma$ and $z_{n}=\alpha_n+i\beta_n,\ n=1,\dots,N, $ as
\begin{equation}\label{eq:z1}
\partial_{t}(\alpha_{n}+p_{n})-E(\lambda)(\beta_{n}+q_{n})+\langle ImN(\Vec{R},z),\eta_{n}\rangle=F_{1n};
\end{equation}
\begin{equation}\label{eq:z2}
\partial_{t}(\beta_{n}+q_{n})+E(\lambda)(\alpha_{n}+p_{n})-\langle ReN(\Vec{R},z),\xi_{n}\rangle=F_{2n};
\end{equation}
\begin{equation}\label{eq:gamma}
\dot\gamma+\partial_{t}a_{2}-a_{1}-\frac{1}{\langle
\phi^{\lambda},\phi^{\lambda}_{\lambda}\rangle}\langle
ReN(\Vec{R},z),\partial_{\lambda}\phi^{\lambda}\rangle=F_{3};
\end{equation}
\begin{equation}\label{eq:lambda}
\dot\lambda+\partial_{t}a_{1}+\frac{1}{\langle
\phi^{\lambda},\phi^{\lambda}_{\lambda}\rangle}\langle
ImN(\Vec{R},z),\phi^{\lambda}\rangle=F_{4}
\end{equation}
where the scalar functions $F_{j,n},\ j=1,2,\ n=1,2,\cdots,N,\  F_3,
 F_4,$ are defined as
\begin{align}
F_{1n}\ &=\ \dot\gamma\langle
(\beta+q)\cdot\eta,\eta_{n}\rangle-\dot{\lambda}a_{1}\langle
\partial_{\lambda}^{2}\phi^{\lambda},\eta_{n}\rangle
-\dot\lambda\langle
(\alpha+p)\cdot\partial_{\lambda}\xi,\eta_{n}\rangle
-\dot\gamma\langle
R_{2},\eta_{n}\rangle+\dot\lambda\langle
R_{1},\partial_{\lambda}\eta_{n}\rangle,\nn\\
F_{2n}\ &=\ -\dot\gamma\langle
(\alpha+p)\cdot\xi,\xi_{n}\rangle
-\dot{\lambda}a_{2}\langle\phi^{\lambda}_{\lambda},
\xi_{n}\rangle
-\dot\lambda\langle(\beta+q)\cdot\partial_{\lambda}\eta,\xi_{n}\rangle+\dot\gamma\langle
R_{1},\xi_{n}\rangle+\dot\lambda\langle
R_{2},\partial_{\lambda}\xi_{n}\rangle,\nn\\
F_{3}\ &=\ \frac{1}{\langle
\phi^{\lambda},\phi^{\lambda}_{\lambda}\rangle}
\left[\ \dot\lambda \langle
R_{2},\phi_{\lambda\lambda}^{\lambda}\rangle -\dot\gamma\langle
R_{1},\phi_{\lambda}^{\lambda}\rangle
-\langle\dot\gamma(\alpha+p)\cdot\xi+
\dot{\lambda}a_{2}\phi^{\lambda}_{\lambda}
+\dot\lambda (\beta+q)\cdot\partial_{\lambda}\eta,\phi^{\lambda}_{\lambda}\rangle\ \right],\nn\\
F_{4}\ &=\ \frac{1}{\langle
\phi^{\lambda},\phi^{\lambda}_{\lambda}\rangle}
\left[\ \dot\lambda\langle
R_{1},\phi_{\lambda}^{\lambda}\rangle +\dot\gamma\langle
R_{2},\phi^{\lambda}\rangle
+\langle
\dot\gamma (\beta+q)\cdot \eta-\dot{\lambda}a_{1}
\partial_{\lambda}^{2}\phi^{\lambda}-\dot\lambda
(\alpha+p)\cdot\partial_{\lambda}\xi,\phi^{\lambda}\ \rangle\ \right].
\end{align}

\section{The Normal Form Expansion}\label{sec:NormalFormExp}
All the results in this Appendix, except the definitions of $R_{m,n}, \ JN_{m,n},\ m+n=3$, are taken from ~\cite{GaWe}. Specifically the definitions of $a_1, \ a_2,\ p_{k},\ q_{k},\ k=1,2,\cdots, N,$ are taken from (9-12) and (9-13), p. 288; the definitions of $R_{m,n}, \ JN_{m,n},\ m+n=2,$ from (9-18)-(9-21), p. 290.

Before defining various functions we introduce the following convention on notations:
\textbf{we always use $z$
to stand for a complex $N$-dimensional vector $z=(z_{1},z_{2},\cdot\cdot\cdot,z_{N})$ and an upper case letter or a
Greek letter with two subindices, for example $Q_{m,n}$, to represent
\begin{equation}\label{eq:convention}
Q_{m,n}(\lambda)=\sum_{
 |a|=m,\ |b|=n
}q_{a,b}(\lambda)\
 \prod_{k=1}^{N}z_{k}^{a_{k}}\bar{z}_{k}^{b_{k}},
\end{equation} where $a,b\in \mathbb{N}^N$, $|a|:=\displaystyle\sum_{k=1}^{N}a_{k}$.
 We refer to this kind term as $(m,n)$ term.}

In what follows we define $R_{m,n},\ m+n=2,3,$ $JN_{m',n'},\ m'+n'=2,3,4,$ and the polynomials $a_1,\ a_2,\ p_k,\ q_k,\ k=1,2,\cdots, N,$ by induction.
\begin{flushleft}
{\bf{Definitions of Polynomials $a_1,\ a_2,\ p_k,\ q_k,\ k=1,2,\cdots, N$}}
\end{flushleft}
We define the polynomials $a_{1}$, $a_{2}$, $p_{k}$ and $q_{k},\
k=1,2,\cdot\cdot\cdot,N,$ in (~\ref{eq:decom}) as
\begin{equation}\label{eq:pkmn}
\begin{array}{lll}
a_{k}(z,\bar{z})&:=&\displaystyle\sum_{m+n=2,3,m\not=n}A^{(k)}_{m,n}(\lambda),
\
k=1,2,\\
p_{k}(z,\bar{z})&:=&\displaystyle\sum_{m+n=
2,3}P^{(k)}_{m,n}(\lambda),
\ k=1,2,\cdot\cdot\cdot,N,\\
q_{k}(z,\bar{z})&:=&\displaystyle\sum_{m+n=
2,3}Q^{(k)}_{m,n}(\lambda),
\ k=1,2,\cdot\cdot\cdot,N\\
\end{array}
\end{equation}
where the terms on the right hand side take the form:
\begin{equation}\label{eq:A1}
\begin{array}{lll}
2iE(\lambda)A_{2,0}^{(1)}&:=&\frac{1}{\langle
\phi^{\lambda},\partial_{\lambda}\phi^{\lambda}\rangle}\langle
N^{Im}_{2,0},\phi^{\lambda}\rangle;\\
3iE(\lambda)A_{3,0}^{(1)}&:=&\frac{1}{\langle
\phi^{\lambda},\partial_{\lambda}\phi^{\lambda}\rangle}\langle
N^{Im}_{3,0},\phi^{\lambda}\rangle;\\
i E(\lambda)A_{2,1}^{(1)}&:=&\frac{1}{\langle
\phi^{\lambda},\partial_{\lambda}\phi^{\lambda}\rangle}[\langle
N^{Im}_{2,1},\phi^{\lambda}\rangle-\frac{i}{2}\Upsilon_{1,1}\langle
z\cdot\eta,\phi^{\lambda}\rangle];
\end{array}
\end{equation}
\begin{equation}
\begin{array}{lll}
-2iE(\lambda)A_{2,0}^{(2)}-A_{2,0}^{(1)}&:=&\frac{1}{\langle
\phi^{\lambda},\partial_{\lambda}\phi^{\lambda}\rangle}\langle
N^{Re}_{2,0},\partial_{\lambda}\phi^{\lambda}\rangle;\\
-3iE(\lambda)A_{3,0}^{(2)}-A_{3,0}^{(1)}&:=&\frac{1}{\langle
\phi^{\lambda},\partial_{\lambda}\phi^{\lambda}\rangle}\langle
N^{Re}_{3,0},\partial_{\lambda}\phi^{\lambda}\rangle;\\
-iE(\lambda)A_{2,1}^{(2)}-A_{2,1}^{(1)}&:=&\frac{1}{\langle
\phi^{\lambda},\partial_{\lambda}\phi^{\lambda}\rangle}[\langle
N^{Re}_{2,1},\partial_{\lambda}\phi^{\lambda}\rangle
-\frac{1}{2}\Upsilon_{1,1}\langle
z\cdot\xi,\partial_{\lambda}\phi^{\lambda}\rangle];
\end{array}
\end{equation}
$$A_{k,l}^{(n)}:=\overline{A_{l,k}^{(n)}}\ \text{for}\ n=1,2,\ k+l=2,3,
\ k\not=l;$$ and
\begin{equation}\label{eq:pq20and02}
\begin{array}{lll}
-2iE(\lambda)P_{2,0}^{(n)}-E(\lambda)Q_{2,0}^{(n)}&:=&-\langle
N_{2,0}^{Im},\eta_{n}\rangle,\\
-2iE(\lambda)Q_{2,0}^{(n)}+E(\lambda)P_{2,0}^{(n)}&:=&\langle
N_{2,0}^{Re},\xi_{n}\rangle,\\
-3iE(\lambda)P_{3,0}^{(n)}-E(\lambda)Q_{3,0}^{(n)}&:=&-\langle
N_{3,0}^{Im},\eta_{n}\rangle,\\
-3iE(\lambda)Q_{3,0}^{(n)}+E(\lambda)P_{3,0}^{(n)}&:=&\langle
N_{3,0}^{Re},\xi_{n}\rangle,\\
\end{array}
\end{equation}
\begin{equation}
\begin{array}{lll}
2iE(\lambda)P_{1,2}^{(n)}-2E(\lambda)Q_{1,2}^{(n)}&:=&-\langle
N_{1,2}^{Im},\eta_{n}\rangle+i\langle N_{1,2}^{Re},\xi_{n}\rangle\\
& &+i\Upsilon_{1,1}\displaystyle\sum_{k=1}^{d}\bar{z}_{k}[\langle\eta_{k},\eta_{n}\rangle-\langle\xi_{k},\xi_{n}\rangle]\\
-E(\lambda)Q_{1,1}^{(n)}&:=&-\langle N_{1,1}^{Im},\eta_{n}\rangle,\\
E(\lambda)P_{1,1}^{(n)}&:=&\langle N_{1,1}^{Re},\xi_{n}\rangle.
\end{array}
\end{equation}
$$P_{k,l}^{(n)}:=\overline{P_{l,k}^{(n)}},\ Q_{l,k}^{(n)}:=\overline{Q_{k,l}^{(n)}}.$$
The functions $JN_{m,n}=\left(
\begin{array}{lll}
N^{Im}_{m,n}\\
-N^{Re}_{m,n}
\end{array}
\right)$ used above will be defined in the next subsection.
\begin{flushleft}
{\bf{ Expansion of $J\vec{N}$ and $\vec{R}$}}
\end{flushleft}
For $m+n=2$ we define
\begin{equation}\label{eq:Rform}
R_{m,n}:=\left(
\begin{array}{lll}
R_{m,n}^{(1)}\\
R_{m,n}^{(2)}
\end{array}
\right):=[L(\lambda)+iE(\lambda)(m-n)-0]^{-1}P_{c}JN_{m,n}.
\end{equation}
We denote the remainder of the second order expansion by $\tilde{R},$ i.e.,
\begin{equation}\label{eq:difTildeR}
\tilde{R}=\vec{R}-\sum_{m+n=2}R_{m,n}.
\end{equation}
For $m+n=3$ we define
\begin{equation}\label{eq:difRgeq3}
R_{m,n}:=[L(\lambda)+iE(\lambda)(m-n)-0]^{-1}P_{c}[JN_{m,n}+X_{m,n}]
\end{equation} where $\displaystyle\sum_{m+n=3}X_{m,n}:=\Upsilon_{1,1}\left(
\begin{array}{lll}
-\beta\cdot\eta \\
\alpha\cdot\xi
\end{array}
\right)$ and recall the definition of $\Upsilon_{1,1}$ in ~\eqref{eq:Gamma11}.

We define the quadratic terms $JN_{m,n},\ m+n=2,$ as
\begin{equation}\label{eq:SecondOrderTerm}
\displaystyle\sum_{m+n=2}\ JN_{m,n}\ =\ \sigma(\phi^{\lambda})^{2\sigma-1}\left(
\begin{array}{ccc}
2\ (\alpha\cdot\xi)(\beta\cdot\eta)\\
-[2\sigma+1](\alpha\cdot\xi)^{2}-(\beta\cdot\eta)^{2}
\end{array}\right),
\end{equation}
and define $JN_{m,n},$ $m+n=3,$ by
\begin{equation}\label{eq:ThirdOrderJN}
\sum_{m+n=3}JN_{m,n}:=\sum_{m+n=3}\left(
\begin{array}{lll}
N^{Im}_{m,n}\\
-N^{Re}_{m,n}
\end{array}
\right)
\end{equation} where $N^{Im}_{m,n}$ and $N^{Re}_{m,n}$ are defined as $$
\begin{array}{lll}
\displaystyle\sum_{m+n=3}N^{Im}_{m,n}&:=&2\sigma (\phi^{\lambda})^{2\sigma-1}(\alpha\cdot \xi)\displaystyle\sum_{m+n=2}[A^{(2)}_{m,n}\phi^{\lambda}+Q_{m,n}\cdot\eta+R^{(2)}_{m,n}]\\
& &+2\sigma (\phi^{\lambda})^{2\sigma-1}(\beta\cdot\eta)\displaystyle\sum_{m+n=2}[A_{m,n}^{(1)}\partial_{\lambda}\phi^{\lambda}+P_{m,n}\cdot\xi+R^{(1)}_{m,n}]\\
& &+\sigma (\phi^{\lambda})^{2\sigma-2}[(\alpha\cdot \xi)^2+(\beta\cdot\eta)^2][\beta\cdot\eta]+2\sigma(\sigma-1) (\phi^{\lambda})^{2\sigma-2}(\alpha\cdot\xi)^2 (\beta\cdot\eta),
\end{array}
$$ and
$$
\begin{array}{lll}
\displaystyle\sum_{m+n=3}N^{Re}_{m,n}&:=& 2\sigma(2\sigma+1)(\phi^{\lambda})^{2\sigma-1}(\alpha\cdot\xi)\displaystyle\sum_{m+n=2}[A^{(1)}_{m,n}\partial_{\lambda}\phi^{\lambda}+P_{m,n}\cdot\xi +R_{m,n}^{(1)}]\\
& &+2\sigma (\phi^{\lambda})^{2\sigma-1}(\beta\cdot\eta)\displaystyle\sum_{m+n=2}[A_{m,n}^{(2)}\phi^{\lambda}+Q_{m,n}\cdot\eta+R_{m,n}^{(2)}]\\
& &+\sigma[2\sigma-1+\frac{4}{3}(\sigma-1)(\sigma-2)](\phi^{\lambda})^{2\sigma-2}(\alpha\cdot\xi)^3+\sigma(2\sigma-1)(\phi^{\lambda})^{2\sigma-2}(\alpha\cdot\xi)(\beta\cdot\eta)^2.
\end{array}
$$
Now we expand $J\vec{N}$ to fourth order:
\begin{equation}\label{eq:FourthOrderJN}
\sum_{m+n=4}JN_{m,n}:=\sum_{m+n=4}\left(
\begin{array}{lll}
N^{Im}_{m,n}\\
-N^{Re}_{m,n}
\end{array}
\right)
\end{equation} with $N^{Im}_{m,n}$ and $N^{Re}_{m,n}$ defined as $$
\begin{array}{lll}
& &\displaystyle\sum_{m+n=4}N^{Im}_{m,n}\\
&:=& 2\sigma(\phi^{\lambda})^{2\sigma-1}(\alpha\cdot\xi)\displaystyle\sum_{m+n=3}[A^{(2)}_{m,n}\phi^{\lambda}
+Q_{m,n}\cdot\eta+R_{m,n}^{(2)}]\\
& &+2\sigma(\phi^{\lambda})^{2\sigma-1}(\beta\cdot\eta)\displaystyle\sum_{m+n=3}[A^{(1)}_{m,n}\partial_{\lambda}\phi^{\lambda}
+P_{m,n}\cdot\xi+R_{m,n}^{(1)}]\\
& &+2\sigma(\phi^{\lambda})^{2\sigma-1}\displaystyle\sum_{m+n=2}[A^{(1)}_{m,n}\partial_{\lambda}\phi^{\lambda}
+P_{m,n}\cdot\xi+R_{m,n}^{(1)}]
\displaystyle\sum_{m'+n'=2}[A^{(2)}_{m',n'}\phi^{\lambda}
+Q_{m',n'}\cdot\eta+R_{m',n'}^{(2)}]\\
& &+\sigma(\phi^{\lambda})^{2\sigma-2}[(2\sigma-1)(\alpha\cdot\xi)^2+3(\beta\cdot\eta)^2]\displaystyle\sum_{m+n=2}[A^{(2)}_{m,n}\phi^{\lambda}
+Q_{m,n}\cdot\eta+R_{m,n}^{(2)}]\\
& &+2\sigma(2\sigma-1)(\phi^{\lambda})^{2\sigma-2}(\beta\cdot\eta)(\alpha\cdot\xi) \displaystyle\sum_{m+n=2}[A^{(1)}_{m,n}\partial_{\lambda}\phi^{\lambda}
+P_{m,n}\cdot\xi+R_{m,n}^{(1)}]\\
& &+2\sigma(\sigma-1)(\phi^{\lambda})^{2\sigma-3}[\frac{2\sigma-1}{3}(\alpha\cdot\xi)^3(\beta\cdot\eta)+
(\alpha\cdot\xi)(\beta\cdot\eta)^3]
\end{array}
$$ and $$
\begin{array}{lll}
\displaystyle\sum_{m+n=4}N^{Re}_{m,n}
&:=&2\sigma(2\sigma+1)(\phi^{\lambda})^{2\sigma-1}(\alpha\cdot\xi)\displaystyle\sum_{m+n=3}[A^{(1)}_{m,n}\partial_{\lambda}\phi^{\lambda}
+P_{m,n}\cdot\xi+R_{m,n}^{(1)}]\\
& &+2\sigma(\phi^{\lambda})^{2\sigma-1}(\beta\cdot\eta)\displaystyle\sum_{m+n=3}[A^{(2)}_{m,n}\phi^{\lambda}
+Q_{m,n}\cdot\eta+R_{m,n}^{(2)}]\\
& &+\sigma(2\sigma+1)(\phi^{\lambda})^{2\sigma-1}[\displaystyle\sum_{m+n=3}(A^{(1)}_{m,n}\partial_{\lambda}\phi^{\lambda}
+P_{m,n}\cdot\xi+R_{m,n}^{(1)})]^2\\
& &+\sigma(\phi^{\lambda})^{2\sigma-1}[\displaystyle\sum_{m+n=2}(A^{(2)}_{m,n}\phi^{\lambda}
+Q_{m,n}\cdot\eta+R_{m,n}^{(2)})]^2\\
& &+\sigma(\phi^{\lambda})^{2\sigma-2}[3C_{1}(\sigma)(\alpha\cdot\xi)^2+C_2(\sigma)(\beta\cdot\eta)^2]\displaystyle\sum_{m+n=2}[A^{(1)}_{m,n}\partial_{\lambda}\phi^{\lambda}
+P_{m,n}\cdot\xi+R_{m,n}^{(1)}]\\
& &+2\sigma(\phi^{\lambda})^{2\sigma-2}C_{3}(\sigma)(\beta\cdot\eta)(\alpha\cdot\xi)\displaystyle\sum_{m+n=2}[A^{(2)}_{m,n}\phi^{\lambda}
+Q_{m,n}\cdot\eta+R_{m,n}^{(2)}]\\
& &+(\phi^{\lambda})^{2\sigma-3}[C_{4}(\sigma)(\alpha\cdot\xi)^4+C_{5}(\sigma)(\beta\cdot\eta)^4
+C_{6}(\sigma)(\alpha\cdot\xi)^2(\beta\cdot\eta)^2]
\end{array}
$$ where $C_{k}(\sigma),\ k=1,2,\cdots,6$ are real constants, $C_k(1)=1 $ if $k=1,2,3,$ and $C_{l}(1)=0$ if $l=4,5,6.$
\begin{remark}\label{remark2}
Note that if in ~\eqref{eq:NLS} $\sigma>1$ the function $JN_{m,n},\ m+n=4$ might NOT be well defined: in the last lines of definitions of $N_{m,n}^{Im}$ and $N_{m,n}^{Re}$ we have terms of the form $(\phi^{\lambda})^{2\sigma-3}(\alpha\cdot \xi)^{4}$ where the power $2\sigma-3$ of $\phi^{\lambda}$ might be negative. Still the definitions are useful because later we will take inner production with $JN_{m,n},\ m+n=4$ and $\left(
\begin{array}{lll}
\phi^{\lambda}\\
0
\end{array}
\right)$, see ~\eqref{eq:JN5sigma}.
\end{remark}

To facilitate later discussions we define
\begin{equation}\label{dif:R4}
R_{\geq 4}:=\vec{R}-\sum_{m+n=2,3}R_{m,n}
\end{equation}
and
\begin{equation}\label{def:JN5}
JN_{\geq 5}:=J\vec{N}(\vec{R},z)-\displaystyle\sum_{m+n=2}^{4}JN_{m,n}.
\end{equation}

\section{Derivation of Equation ~\eqref{eq:DetailLambda}}\label{sec:deriv}
By the equation for $\dot\lambda(t)$ in ~\eqref{eq:lambda} we derive the following modulation equation
\begin{equation}\label{eq:lambda2}
\frac{1}{2}\ \frac{d}{dt}\ \left\|\phi^{\lambda(t)}\right\|^2_{2}\ =\
\langle
\phi^{\lambda},\phi^{\lambda}_{\lambda}\rangle\dot\lambda\ =\  - \langle
ImN(\Vec{R},z),\phi^{\lambda}\rangle+\langle
\phi^{\lambda},\phi^{\lambda}_{\lambda}\rangle F_{4}\ -\langle
\phi^{\lambda},\phi^{\lambda}_{\lambda}\rangle\partial_{t}a_{1}.
\end{equation}
To see the increasing of the mass on the ground state, or $\left\|\phi^{\lambda(t)}\right\|^2_{2}$ we resort to expand the terms on the right hand side to fourth order in $z$ and $\bar{z}$:
\begin{itemize}
\item[(1)]
The definitions of $a_{1}$ in ~\eqref{eq:pkmn} and ~\eqref{eq:A1} imply
\begin{equation}\label{eq:a1detail}
\begin{array}{lll}
\partial_{t}a_1
&=&\partial_{t}a_1+iE(\lambda)\displaystyle
\sum_{m+n=2,3}(m-n)A_{m,n}^{(1)}\\
& &-\frac{1}{\langle \phi^{\lambda},\partial_{\lambda}\phi^{\lambda}\rangle}[\langle \displaystyle\sum_{m+n=2,3} JN_{m,n},\left(
\begin{array}{lll}
\phi^{\lambda}\\
0
\end{array}
\right)\rangle-\Upsilon_{1,1}\langle \beta\cdot\eta,\phi^{\lambda}\rangle]
\end{array}
\end{equation}
where in the second step the fact $\langle N_{1,1}^{Im},\phi^{\lambda}\rangle=0$, proved in ~\cite{GaWe} Lemma 9.4 on p. 291, is used. Extracting the lower order terms, we find
 \begin{equation}\label{eq:a1detail2}
\begin{array}{lll}
\partial_{t}a_1
&=&\partial_{z}a_1\cdot Z_{2,1}+\partial_{\bar{z}}a_1\cdot\overline{Z_{2,1}}
-\frac{1}{\langle \phi^{\lambda},\partial_{\lambda}\phi^{\lambda}\rangle}[\langle \displaystyle\sum_{m+n=2,3} JN_{m,n},\left(
\begin{array}{lll}
\phi^{\lambda}\\
0
\end{array}
\right\rangle-\Upsilon_{1,1}\langle \beta\cdot\eta,\phi^{\lambda}\rangle]\\
& &+\partial_{t}a_1+iE(\lambda)\displaystyle
\sum_{m+n=2,3}(m-n)A_{m,n}^{(1)}-\partial_{z}a_1\cdot Z_{2,1}-\partial_{\bar{z}}a_1\cdot\overline{Z_{2,1}}
\end{array}
\end{equation} with $Z_{2,1}=-\Gamma(z,\bar{z})z+\Lambda(z,\bar{z})z$ defined in ~\eqref{eq:ZNequation}.
\item[(2)]
Separate the terms of order $|z|^3$, $|z|^4$ and $|z|^5$ from $F_{4}$ and obtain
\begin{equation}\label{eq:F4}
\begin{array}{lll}
\langle \phi^{\lambda},\partial_{\lambda}\phi^{\lambda}\rangle F_{4}&=&\Upsilon_{1,1}\displaystyle\sum_{m+n=2,3}\langle R_{m,n},\left(
\begin{array}{lll}
0\\
\phi^{\lambda}
\end{array}
\right)\rangle+\Upsilon_{1,1}\langle (\beta+q)\cdot\eta,\phi^{\lambda}\rangle\\
& &+(\dot\gamma -\Upsilon_{1,1})[\langle \vec{R},
\left(
\begin{array}{lll}
0\\
\phi^{\lambda}
\end{array}
\right)
\rangle+\langle (\beta+q)\cdot\eta,\phi^{\lambda}\rangle]+\Upsilon_{1,1}\langle R_{\geq 4},\left(
\begin{array}{lll}
0\\
\phi^{\lambda}
\end{array}
\right)\rangle\\
& &\\
& &+\dot\lambda\langle \vec{R},
\left(
\begin{array}{lll}
\partial_{\lambda}\phi^{\lambda}\\
0
\end{array}
\right)
\rangle-\dot\lambda a_{1}\langle \partial_{\lambda}^{2}\phi^{\lambda},\phi^\lambda\rangle-\dot\lambda (\alpha+p)\langle \partial_{\lambda}\xi,\phi^{\lambda}\rangle.
\end{array}
\end{equation}
\item[(3)]
The definitions of $JN_{m,n},\ m+n=2,3,4,$ and $JN_{\geq 5}$ in ~\eqref{eq:SecondOrderTerm}-~\eqref{eq:FourthOrderJN} and ~\eqref{def:JN5} imply that
\begin{equation}\label{eq:imN}
\langle ImN(\vec{R},z),\phi^{\lambda}\rangle=\langle\sum_{m+n=2}^{4}JN_{m,n},\left(
\begin{array}{lll}
\phi^{\lambda}\\
0
\end{array}
\right)\rangle+\langle JN_{\geq 5}, \left(
\begin{array}{lll}
\phi^{\lambda}\\
0
\end{array}
\right)\rangle.
\end{equation}
\end{itemize}
Equations ~\eqref{eq:lambda2}-~\eqref{eq:imN} and  cancellation of  terms in sum leads to ~\eqref{eq:DetailLambda}.


\section{Estimates on the Eigenvectors of $L(\lambda)$ and the Parameters of Normal Form Transformation}\label{sec:CorrectionNormal}
Precise estimations of  $S_{z}$ and $S_{\lambda}$ defined in Theorems ~\ref{THM:Zequation} and ~\ref{THM:KeyTerm} require control on coefficients depending on norms of $\phi^\lambda$,  its derivatives
 as well as such norms of the neutral modes.
 Recall the definition of $\delta_{\infty}$ and the fact $\delta_{\infty}=\mathcal{O}(\delta(\lambda(t)))$ in ~\eqref{eq:defDelta}.
 
The result is
\begin{proposition}\label{Prop:Parameters}
There exist constants $C_0,\ C_1, \ C_2\in\mathbb{R}$ such that in the space $\langle x\rangle^{-4}{H}^{2}$
\begin{equation}\label{eq:LambdaPhi2}
\begin{array}{lll}
\phi^{\lambda}&=&C_0 |e_{0}+\lambda|^{\frac{1}{2\sigma}} \phi_{lin}+\cO(|e_{0}+\lambda|^{1+\frac{1}{2\sigma}})=\cO(\delta_{\infty})\\
& &\\
\partial_{\lambda}\phi^{\lambda}&=&C_1 |e_{0}+\lambda|^{\frac{1}{2\sigma}-1}\phi_{lin}+\cO(|e_{0}+\lambda|^{\frac{1}{2\sigma}})=\cO(\delta_{\infty}^{-(2\sigma-1)}),\\
& &\\
\partial_{\lambda}^2 \phi^{\lambda}&=&C_2 |e_{0}+\lambda|^{\frac{1}{2\sigma}-2}\phi_{lin}+\cO(|e_{0}+\lambda|^{\frac{1}{2\sigma}-1})=\cO(\delta_{\infty}^{-(4\sigma-1)})
.
\end{array}
\end{equation}
\begin{equation}\label{eq:denominator}
\frac{1}{\langle \phi^{\lambda},
\partial_{\lambda}\phi^{\lambda}\rangle}\lesssim \delta_{\infty}^{2\sigma-2}.
\end{equation}
For the neutral modes we have
\begin{equation}\label{eq:derXi}
\|\langle x\rangle^{4}\partial_{\lambda}\xi_{n}\|_{2},\ \|\langle x\rangle^{4}\partial_{\lambda}\eta_{n}\|_{2}\lesssim 1;
\end{equation}
\begin{equation}\label{eq:asympto}
\|\langle x\rangle^{4}(\eta_{m}-\xi_{m}^{lin})\|_{H^2},\ \|\langle x\rangle^{4}(\xi_{m}-\xi_{m}^{lin})\|_{H^2},\ \|\langle x\rangle^{4}(\xi_{m}-\eta_{m})\|_{H^2}=\cO(\delta_{\infty}^{2\sigma}).
\end{equation} Recall $P_{c}^{lin}$ is the orthogonal project onto the essential spectrum of $-\Delta+V$
\begin{equation}\label{eq:projection}
P_{c}^{\lambda}=P_{c}^{lin}\left(
\begin{array}{ll}
1&0\\
0&1
\end{array}
\right)+\mathcal{O}(\delta_{\infty}^{2\sigma}).
\end{equation} The function $\Upsilon_{1,1}$ in ~\eqref{eq:Gamma11} satisfies the estimate
\begin{equation}\label{eq:upsilon11}
|\Upsilon_{1,1}|\lesssim \delta_{\infty}^{2\sigma-2}|z|^2.
\end{equation}
In what follows we estimate various functions defined in Appendix ~\ref{sec:NormalFormExp}.\\
For $m+n=2,$ $$\|\langle x\rangle^{4}JN_{m,n}\|_{2},\ \|\langle x\rangle^{-4}R_{m,n}\|_{2}\lesssim \delta_{\infty}^{2\sigma-1}|z|^{2},$$
$$|A^{(1)}_{m,n}|\lesssim \delta_{\infty}^{2(2\sigma-1)}|z|^2,$$
$$ |A^{(2)}_{m,n}|\lesssim \delta_{\infty}^{2\sigma-2}|z|^2,$$ $$|P_{m,n}^{(k)}|,\ |Q_{m,n}^{(k)}|\lesssim \delta_{\infty}^{2\sigma-1}|z|^{2},\ k=1,2,\cdots,N.$$
For $m+n=3,$ $$\|\langle x\rangle^{4}JN_{m,n}\|_{2},\ \
 \|\langle x\rangle^{-4 }R_{m,n}\|_{2}\lesssim \delta_{\infty}^{2\sigma-2}|z|^{3}.$$
$$|A^{(1)}_{m,n}|\lesssim \delta_{\infty}^{4\sigma-3}|z|^3,$$ $$|A^{(2)}_{m,n}|\lesssim \delta_{\infty}^{2\sigma-3}|z|^3,$$ $$|P_{m,n}^{(k)}|,\ |Q_{m,n}^{(k)}|\lesssim \delta_{\infty}^{2\sigma-2}|z|^{3},\ k=1,2,\cdots,N.$$
For $m+n=4$ and $\sigma=1$ $$\|\langle x\rangle^{4}JN_{m,n}\|_{2}\lesssim \delta_{\infty}|z|^{4}.$$
\end{proposition}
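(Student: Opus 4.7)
The plan is to prove Proposition \ref{Prop:Parameters} by a systematic scaling bookkeeping built on the bifurcation Ansatz $\phi^{\lambda}=\delta(\phi_{lin}+\cO(\delta^{2\sigma}))$ from Proposition \ref{bif-of-gs}, analytic perturbation theory for $L(\lambda)$ around $\lambda=-e_0$, and the explicit definitions of the normal form objects from Appendix \ref{sec:NormalFormExp}. Since $\delta_\infty=\cO(\delta(\lambda(t)))$ by \eqref{eq:defDelta}, all bounds are interchangeable between $\delta$ and $\delta_\infty$, so throughout I will work with $\delta=|e_0+\lambda|^{1/(2\sigma)}$.

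First I would establish \eqref{eq:LambdaPhi2} by differentiating the bifurcation expansion in $\lambda$. Since $\delta=C|e_0+\lambda|^{1/(2\sigma)}$, each $\lambda$-derivative of $\delta$ produces a factor $|e_0+\lambda|^{-1}\sim\delta^{-2\sigma}$, so $\partial_\lambda\delta\sim\delta^{1-2\sigma}$ and $\partial_\lambda^2\delta\sim\delta^{1-4\sigma}$. To justify that the $\cO(\delta^{2\sigma})$ remainder in Proposition \ref{bif-of-gs} survives differentiation in $\langle x\rangle^{-4}H^2$ one uses that $\phi^\lambda$ is an analytic branch obtained by the implicit function theorem applied to the rescaled bifurcation equation; analyticity in $\delta$ of the correction to $\phi_{lin}$ transfers to $\lambda$-regularity, with weighted $H^2$-bounds following from the exponential decay of $\phi_{lin}$ and the bound states of $-\Delta+V$. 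Then \eqref{eq:denominator} is immediate from $\langle\phi^\lambda,\partial_\lambda\phi^\lambda\rangle\sim\delta\cdot\delta^{1-2\sigma}=\delta^{2-2\sigma}$.

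Next I would turn to the spectral data of $L(\lambda)$. Since $L_\pm(\lambda)=-\Delta+V+\lambda-c_\sigma(\phi^\lambda)^{2\sigma}$ with $(\phi^\lambda)^{2\sigma}=\cO(\delta^{2\sigma})$ in $\langle x\rangle^{-4}L^\infty$, the operator $L(\lambda)$ is an analytic perturbation of $L(-e_0)$ of size $\delta^{2\sigma}$ in the appropriate weighted-$H^2$ topology. Standard Kato/Sz.-Nagy perturbation theory then yields \eqref{eq:derXi} and \eqref{eq:asympto}, and applied to the Riesz projections yields \eqref{eq:projection}. The estimate \eqref{eq:upsilon11} on $\Upsilon_{1,1}$ follows by substituting the asymptotic sizes: the numerator is $\int(\phi^\lambda)^{2\sigma-1}\partial_\lambda\phi^\lambda\cdot\cO(|z|^2)\sim\delta^{2\sigma-1}\cdot\delta^{1-2\sigma}|z|^2=|z|^2$, and division by $\langle\phi^\lambda,\partial_\lambda\phi^\lambda\rangle\sim\delta^{2-2\sigma}$ gives $\delta^{2\sigma-2}|z|^2$.

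Finally, the bounds on $JN_{m,n},R_{m,n},A^{(k)}_{m,n},P^{(k)}_{m,n},Q^{(k)}_{m,n}$ are obtained inductively from Appendix \ref{sec:NormalFormExp}: for $m+n=2$ one reads off from \eqref{eq:SecondOrderTerm} that $\|\langle x\rangle^4 JN_{m,n}\|_2\lesssim\delta^{2\sigma-1}|z|^2$; the corresponding $R_{m,n}$ bound comes from weighted-$L^2$ resolvent estimates on $(L(\lambda)+iE(\lambda)(m-n)-0)^{-1}P_c$ (uniform in $\lambda$ by the absence of threshold resonances, \textbf{(Thresh$_\lambda$)}). The scalar coefficients $A^{(k)}_{m,n},P^{(k)}_{m,n},Q^{(k)}_{m,n}$ are then evaluated by the defining equations \eqref{eq:A1}--\eqref{eq:pq20and02}, plugging in the previously established bounds and using \eqref{eq:denominator}. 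The same scheme repeats verbatim for $m+n=3$: $JN_{m,n}$ now involves quadratic objects of size $\delta^{2\sigma-1}|z|^2$ times extra linear factors of the already-bounded second-order quantities, producing the announced $\delta^{2\sigma-2}|z|^3$ size; and again the $A,P,Q$ bounds follow from their definitions. The $\sigma=1$, $m+n=4$ estimate on $JN_{m,n}$ is a direct consequence of the formula \eqref{eq:FourthOrderJN} with the third-order inputs substituted. The main technical obstacle is the bookkeeping of negative powers of $\delta$: for $\sigma>1$ some terms carry factors $(\phi^\lambda)^{2\sigma-3}$, which is why the $m+n=4$ bound is stated only in the cubic case (cf.\ Remark \ref{remark2}); care must be taken that when those terms appear in the inductive step for $m+n=3$ they are paired with enough positive powers of $\phi^\lambda$ coming from $R_{m,n}$ and the polynomial $A,P,Q$ to make the overall scaling nonnegative.
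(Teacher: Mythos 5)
Your proposal is correct and follows essentially the same strategy as the paper: a perturbative/bifurcation expansion of $\phi^{\lambda}$ in $\delta$ (the paper does this via the Lyapunov--Schmidt decomposition \eqref{eq:existence2} and a contraction mapping in the weighted space, you via the implicit function theorem and Kato-type perturbation for the spectral data), after which every remaining bound is an inductive plug-in of the established asymptotics into the explicit normal-form definitions. One small slip worth fixing in the bookkeeping: for $m+n=3$ the dominant contribution to $\|\langle x\rangle^{4}JN_{m,n}\|_{2}\lesssim\delta_{\infty}^{2\sigma-2}|z|^{3}$ comes from the direct cubic terms $(\phi^{\lambda})^{2\sigma-2}(\alpha\cdot\xi)^{a}(\beta\cdot\eta)^{b}$ with $a+b=3$ in \eqref{eq:ThirdOrderJN}, not from products of first-order factors with second-order quantities, which are of the smaller size $\delta_{\infty}^{4\sigma-2}|z|^{3}$.
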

\begin{proof}
Since all the functions are defined in term of $\phi^{\lambda},\ \xi,\ \eta$ and their derivatives, we start with deriving estimates for them, or proving ~\eqref{eq:LambdaPhi2}-~\eqref{eq:asympto}.

The key observation is these functions can be constructed perturbatively, as can be found in the known results in the spaces ${H}^{k},k=1,2,$ (see e.g.\cite{TsaiYau02}). In what follows we re-do the proof in the desired space.

We start with ~\eqref{eq:LambdaPhi2} by decompose $\phi^{\lambda}$ as $$\phi^{\lambda}=\delta \phi_{lin}+\phi_{Re} \ \text{with}\ \langle \phi_{lin}, \phi_{Re}\rangle_{L^2}=0.$$ On the subspace parallelling to $\phi^{\lambda}$ and its orthogonal we derive two equations
\begin{equation}\label{eq:existence2}
\begin{array}{lll}
\delta^{-1}\phi_{Re}&=&\delta^{2\sigma}(-\Delta+V+\lambda)^{-1}P_c ( \phi_{lin}+\delta^{-1}\phi_{Re})^{2\sigma+1}\\
& &\\
(\lambda+e_0) &=& \delta^{2\sigma}\langle \phi_{lin}, ( \phi_{lin}+\delta^{-1}\phi_{Re})^{2\sigma+1}\rangle_{L^2}.
\end{array}
\end{equation} where, $\lambda+e_0$ is a fixed small positive constant, and $1-P_c$ is the projection onto the $L^2$ 1-dimensional subspace $\{\phi_{lin}\}$.

We prove the existence of the solutions in appropriate Sobolev spaces by applying the contraction mapping  theorem. Its applicability is fairly routine except observing the map $$(-\Delta+V+\lambda)^{-1}P_c=(-\Delta+\lambda)^{-1}P_c-(-\Delta+\lambda)V(-\Delta+V+\lambda)^{-1}P_c:\ \langle x\rangle^{-4}H^2\rightarrow \langle x\rangle^{-4}H^2 $$ is bounded. By the contraction mapping theorem it is easy to construct the small solutions $\delta^{-1}\phi_{Re}$ and $\delta^{2\sigma}$ and find they are functions of $\lambda+e_0$ with differentiability $\mathcal{C}^3$. (Actually if $\sigma$ is an integer then the functions are analytic in $\lambda+e_0$.) The dependence on $\lambda+e_0$ can be displayed by rewriting  ~\eqref{eq:existence2}
$$
\begin{array}{lll}
\delta^{-1}\phi_{Re}&=&\delta^{2\sigma}(-\Delta+V+\lambda)^{-1}P_c[(\phi_{lin})^{2\sigma+1}+\delta^{2\sigma}\kappa](1+\cO(\delta^{2\sigma}))\\
\delta^{2\sigma}&=&(\lambda+e_0)(\int \phi_{lin}^{2\sigma})^{-1}+C (\lambda+e_0)^2+ \cO(\lambda+e_0)^3
\end{array}
$$ with $\kappa$ being some function in $\langle x\rangle^{-4}{H}^2$ and $C$, a constant. From these it is easy to derive
~\eqref{eq:LambdaPhi2}. The estimates ~\eqref{eq:derXi} and ~\eqref{eq:asympto} can be proved similarly.

~\eqref{eq:denominator} is implied by ~\eqref{eq:LambdaPhi2}. The estimates ~\eqref{eq:projection} and ~\eqref{eq:upsilon11} are implied by their definitions ~\eqref{eq:PdProjection} and ~\eqref{eq:Gamma11} and the estimates ~\eqref{eq:LambdaPhi2}-~\eqref{eq:asympto}.

Recall the definitions of $JN_{m,n}$, $m+n=2,3,4,$ in ~\eqref{eq:SecondOrderTerm}- ~\eqref{eq:FourthOrderJN}. The desired estimates are simple applications of ~\eqref{eq:LambdaPhi2}-~\eqref{eq:upsilon11}. Since all the other functions are defined in terms of $JN_{m,n}$ and the estimates are straightforward, we omitted the details here. This completes the  proof.
\end{proof}

\section{The Estimate on $JN_{\geq 5}:=J\vec{N}-\displaystyle\sum_{m+n=2,3,4}JN_{m,n}$}\label{sec:Remainder}
In the section we estimate the remainder of $JN$ after expanding it to the fourth order.
Recall the definitions of $JN_{m,n},\ m+n=2,3, 4$, in  ~\eqref{eq:SecondOrderTerm}-~\eqref{eq:FourthOrderJN}.
\begin{proposition}\label{Prop:NonlinearRem} If $\sigma=1$ then
\begin{equation}\label{eq:HiRemainder}
JN_{\geq 5}=Loc+NonLoc
\end{equation} with $NonLoc:=[R_{1}^2+R_2^2] J\vec{R}$ and
\begin{equation}\label{eq:Loc}
\|\langle x\rangle^{4 }Loc\|_{L^1},\ \|\langle x\rangle^{4 }Loc\|_{L^2}\lesssim  |z|^5+|z|(\delta_{\infty}+|z|)\|\langle x\rangle^{-4 }R_{\geq 4}\|_{2}+(\delta_{\infty}+|z|)\|\langle x\rangle^{-4 }\vec{R}\|_{2}^2,
\end{equation}
and
\begin{equation}\label{eq:InnerJN5}
\left| \left\langle JN_{\geq 5},\left(
\begin{array}{lll}
\phi^{\lambda}\\
0
\end{array}
\right) \right\rangle\right|\lesssim \delta_{\infty}|z|^5 +\delta_{\infty}|z|\|\langle x\rangle^{-4}R_{\geq 4}\|_{2}+\delta_{\infty}\|\langle x\rangle^{-4}R_{\geq 4}\|_{2}^2.
\end{equation}
\end{proposition}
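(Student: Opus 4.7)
The crucial feature of the cubic case $\sigma=1$ is that $J\vec N$ is a polynomial of total degree $3$ in $(I_1, I_2)$ with coefficients depending on $\phi^{\lambda}$. Explicitly, $ImN = 2\phi^{\lambda} I_1 I_2 + I_1^2 I_2 + I_2^3$ and $ReN = 3\phi^{\lambda} I_1^2 + \phi^{\lambda} I_2^2 + I_1^3 + I_1 I_2^2$, so the expansion is finite and the remainder $JN_{\geq 5}$ can be identified term-by-term. The plan is to expand $I_j$ into its discrete and dispersive parts, isolate the only piece that fails to be localized by a coefficient of $\phi^{\lambda}$ or $\xi,\eta$, and estimate the rest using Proposition ~\ref{Prop:Parameters}.

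First I would write $I_j = J_j + R_j$ where $J_1 := \alpha\cdot\xi + a_1\partial_{\lambda}\phi^{\lambda} + p\cdot\xi$ and $J_2 := \beta\cdot\eta + a_2\phi^{\lambda} + q\cdot\eta$ are the discrete-mode pieces and $R_j = R_{low,j} + R_{\geq 4,j}$ with $R_{low} := \sum_{m+n=2,3} R_{m,n}$. The terms of $J\vec N$ carrying neither a $\phi^{\lambda}$ nor any $J_k$ factor arise only from the pure $\vec R^3$ contributions in $I_1^2 I_2 + I_2^3$ and $-I_1^3 - I_1 I_2^2$; these assemble as $\bigl((R_1^2+R_2^2)R_2,\ -(R_1^2+R_2^2)R_1\bigr)^T = (R_1^2+R_2^2) J\vec R$, which is precisely $NonLoc$. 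By construction no $JN_{m,n}$ with $m+n\le 4$ absorbs any of these cubic-in-$\vec R$ monomials, so they all enter $JN_{\geq 5}$. The remaining summands form $Loc$, and each of them carries at least one rapidly decaying coefficient (an explicit $\phi^{\lambda}$, or one of $\xi,\eta,\phi^{\lambda},\partial_{\lambda}\phi^{\lambda}$ hidden inside a $J_k$). After subtracting $\sum_{m+n=2}^{4} JN_{m,n}$, each summand of $Loc$ falls in one of three types: (a) polynomial in $(z,\bar z)$ of degree $\geq 5$ multiplied by a Schwartz-type profile; (b) linear in $R_{\geq 4}$ multiplied by a $z$-polynomial of degree $\geq 1$ and a decaying profile; (c) quadratic in $\vec R$ multiplied by a decaying coefficient.

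For the weighted $L^1$ and $L^2$ bounds ~\eqref{eq:Loc} I would estimate each type via Proposition ~\ref{Prop:Parameters}: (a) gives $\lesssim |z|^5$ using $\|\langle x\rangle^{4}\phi^{\lambda}\xi\eta\cdots\|_{L^1\cap L^2}\lesssim \delta_{\infty}$; (b) gives $\lesssim |z|(\delta_{\infty}+|z|)\|\langle x\rangle^{-4}R_{\geq 4}\|_{2}$ by pairing $\|\langle x\rangle^{8}(\text{decay factor})\|_{\infty}$ with the weighted $R_{\geq 4}$ norm; (c) gives $\lesssim (\delta_{\infty}+|z|)\|\langle x\rangle^{-4}\vec R\|_{2}^{2}$ after pulling out one $\|\vec R\|_{\infty}\lesssim \epsilon_{\infty}$ via ~\eqref{eq:stability}. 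For the inner product ~\eqref{eq:InnerJN5}, only the first component of $JN_{\geq 5}$ contributes. I would handle $Loc$ by Cauchy--Schwarz, using $\|\langle x\rangle^{-4}\phi^{\lambda}\|_{2}\lesssim\delta_{\infty}$ together with the just-proved $L^2$ bound. For $NonLoc$ I would use the rapid decay of $\phi^{\lambda}$:
\[
\left|\int (R_1^2+R_2^2)R_2\,\phi^{\lambda}\,dx\right|\ \leq\ \|\langle x\rangle^{12}\phi^{\lambda}\|_{\infty}\,\|\vec R\|_{\infty}\,\|\langle x\rangle^{-4}\vec R\|_{2}^{2}\ \lesssim\ \delta_{\infty}\,\|\langle x\rangle^{-4}\vec R\|_{2}^{2},
\]
and then split $\vec R = R_{low} + R_{\geq 4}$ and use the estimates $\|\langle x\rangle^{-4}R_{m,n}\|_{2}\lesssim \delta_{\infty}|z|^{m+n}$ from Proposition ~\ref{Prop:Parameters} plus the hypothesis $|z|\ll\delta_{\infty}$ to absorb the $R_{low}$ contributions into the $\delta_{\infty}|z|^{5}$ term.

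The main obstacle is bookkeeping: the full expansion of $J\vec N$ in the variables $(J_1,J_2,R_{low,1},R_{low,2},R_{\geq 4,1},R_{\geq 4,2})$ yields many monomials that must be cross-referenced against $\sum_{m+n=2}^{4}JN_{m,n}$ to isolate the remainder, and each must be tagged with its exact powers of $|z|$ and $\delta_{\infty}$. The delicate point is verifying that the stray $R_{low}$-contribution to $NonLoc$ (of intrinsic $z$-degree at least six) is indeed absorbed into $\delta_{\infty}|z|^{5}$ and $\delta_{\infty}\|\langle x\rangle^{-4}R_{\geq 4}\|_{2}^{2}$ under the standing smallness hypotheses; this is where the polynomial structure of the $\sigma=1$ nonlinearity is essential, since for $\sigma>1$ the Taylor remainder introduces non-polynomial terms requiring a different treatment (cf. Remark ~\ref{remark2}).
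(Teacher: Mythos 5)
Your proposal takes essentially the same route as the paper: exploit the cubic ($\sigma=1$) polynomial structure of $J\vec N$, isolate the pure $\vec R^3$ piece $(R_1^2+R_2^2)J\vec R$ as the only contribution lacking a localizing factor, and estimate the remaining localized monomials term-by-term via Proposition~\ref{Prop:Parameters}. The paper's own proof is a one-paragraph sketch that declares exactly this case analysis ``tedious but not difficult''; your write-up fills in the bookkeeping consistently with that sketch, and correctly flags (and resolves) the only genuinely delicate point, namely that the $R_{low}$-portions of $NonLoc$ and the cross terms must be absorbed into $\delta_\infty|z|^5$ using the decay of $\phi^{\lambda}$, the bounds on $\|\langle x\rangle^{-4}R_{m,n}\|_2$, and the smallness hypothesis $|z|\ll\delta_\infty$.
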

\begin{proof}
Recall the decomposition of $\psi$ in ~\eqref{eq:decom} and the fact that the nonlinearity of ~\eqref{eq:NLS} is cubic if $\sigma=1$. Hence each term in $Loc$ must be product of three terms taken from $\phi^{\lambda}, \ a_{1}\phi^{\lambda}_{\lambda},\ a_2 \phi^\lambda,\ \displaystyle\sum_{n=1}^N (\alpha_n+p_n)\xi_n,\ i\sum_{n=1}^N (\beta_n+q_n)\eta_n,\ R_{m,n}$ and $R_{\geq 4}.$ By considering all the possibilities and using Proposition ~\ref{Prop:Parameters} we obtain ~\eqref{eq:Loc}. The procedure is tedious but not difficult, hence is omitted here. ~\eqref{eq:InnerJN5} follows easily easily from ~\eqref{eq:Loc} and the fact $\phi^{\lambda}=\mathcal{O}(\delta_{\infty})$. This completes the proof.
\end{proof}

Now we study the cases $\sigma>1.$
\begin{proposition}\label{Prop:NonlinearRem2}
If $\sigma>1$ then
\begin{equation}\label{eq:SigmaGeq1}
\begin{array}{lll}
& &\left\|J\vec{N}-\displaystyle\sum_{m+n=2,3}JN_{m,n}\right\|_{L^1}+\left\|J\vec{N}-\displaystyle\sum_{m+n=2,3}JN_{m,n}\right\|_{L^2}\\
&\lesssim &|z|^{2\sigma+1}+|z|^{4}+|z|\|\langle x\rangle^{-4} R_{\geq 4}\|_{2}+|z|^{\sigma+1}(\|\langle x\rangle^{-4}\vec{R}\|_{\sigma}^{\sigma}+\|\langle x\rangle^{-4}\vec{R}\|_{2\sigma}^{\sigma})+ \|\vec{R}\|_{2\sigma+1}^{2\sigma+1}+\|\vec{R}\|_{2(2\sigma+1)}^{2\sigma+1}
\end{array}
\end{equation}
and
\begin{equation}\label{eq:JN5sigma}
\left|\left\langle J\vec{N}(\vec{R},z)-\sum_{m+n=2}^{4}JN_{m,n},\left(
\begin{array}{lll}
\phi^{\lambda}\\
0
\end{array}
\right)\right\rangle\right|\lesssim |z|^{2\sigma+2}+|z|^5 +\delta_{\infty}^{2\sigma-1}|z|\|\langle x\rangle^{-4}R_{\geq 4}\|_{2}+\|\langle x\rangle^{-4}R_{\geq 4}\|_{2}^2.
\end{equation}
\end{proposition}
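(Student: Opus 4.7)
The plan is to mirror the proof of Proposition~\ref{Prop:NonlinearRem} but to replace the algebraic identities that worked for the cubic case by a pointwise Taylor-type expansion with integral remainder, since for $\sigma>1$ the nonlinearity $|\psi|^{2\sigma}\psi$ is no longer polynomial. Starting from the formulas for $N^{Im},\ N^{Re}$ in Appendix~\ref{sec:decom}, I would fix the base point $\phi^{\lambda}$ and expand $|\phi^{\lambda}+I_{1}+iI_{2}|^{2\sigma}(\phi^{\lambda}+I_{1}+iI_{2})-(\phi^{\lambda})^{2\sigma}\phi^{\lambda}$ in $(I_{1},I_{2})$ up to order $K=3$ for~\eqref{eq:SigmaGeq1} and $K=4$ for~\eqref{eq:JN5sigma}. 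By construction, the $k$-th homogeneous piece, after substituting $I_{1}=\alpha\cdot\xi+a_{1}\partial_{\lambda}\phi^{\lambda}+p\cdot\xi+R_{1}$ and $I_{2}=\beta\cdot\eta+a_{2}\phi^{\lambda}+q\cdot\eta+R_{2}$, reproduces $\sum_{m+n=k}JN_{m,n}$ plus residues containing at least one factor of $\vec R$; the task reduces to controlling the Taylor remainder and those $\vec R$-containing residues.

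For the $L^{1}\cap L^{2}$ bound~\eqref{eq:SigmaGeq1}, the key step is the pointwise estimate, valid for $\sigma>1$,
\begin{equation}
\Big|\,|\phi^{\lambda}+w|^{2\sigma}(\phi^{\lambda}+w)-\sum_{k=0}^{3}T_{k}(\phi^{\lambda};w)\,\Big|\;\lesssim\;(\phi^{\lambda})^{(2\sigma-3)_{+}}\,|w|^{4}+|w|^{2\sigma+1},\nonumber
\end{equation}
obtained by splitting space into $\{|w|\le\phi^{\lambda}\}$, where the integral form of the Taylor remainder produces the first term, and its complement, where one simply uses $|\phi^{\lambda}+w|^{2\sigma+1}\lesssim|w|^{2\sigma+1}$. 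Setting $w=I_{1}+iI_{2}$, substituting the decomposition of $I_{j}$, and invoking Proposition~\ref{Prop:Parameters} for the bounds on $(\phi^{\lambda})^{2\sigma-j}$, $\xi$, $\eta$, $a_{1}$, $a_{2}$, $p$, $q$, the summands on the right-hand side of~\eqref{eq:SigmaGeq1} arise naturally by distributing $\vec R$ over the various slots: the $|z|^{2\sigma+1}$ piece from $|w|^{2\sigma+1}$ with $w\sim z\cdot\xi$; the $|z|^{4}$ piece from the fourth-order polynomial contribution after absorbing $(\phi^{\lambda})^{(2\sigma-3)_{+}}\lesssim 1$; the term $|z|\,\|\langle x\rangle^{-4}R_{\geq 4}\|_{2}$ from placing one $\vec R$ factor into the cubic expansion; and the mixed $|z|^{\sigma+1}\|\langle x\rangle^{-4}\vec R\|^{\sigma}$ and $\|\vec R\|^{2\sigma+1}$ from inserting $\vec R$ into $\sigma$ or $2\sigma+1$ of the $w$-slots of the non-polynomial remainder, together with H\"older's inequality in the weighted $L^{p}$ spaces.

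Estimate~\eqref{eq:JN5sigma} proceeds analogously but expands one order higher and pairs the resulting pointwise decomposition against $(\phi^{\lambda},0)^{T}$. The crucial observation is that the extra factor $\phi^{\lambda}$ in the integrand converts the potentially singular weight $(\phi^{\lambda})^{2\sigma-3}$ appearing in $JN_{m,n}$ for $m+n=4$ into the bounded, decaying $(\phi^{\lambda})^{2\sigma-2}$, which for $\sigma>1$ is integrable and resolves the concern raised in Remark~\ref{remark2}. After substituting $\vec R=\sum_{m+n=2,3}R_{m,n}+R_{\geq 4}$ from~\eqref{dif:R4}, the four listed right-hand side terms correspond respectively to: $|z|^{2\sigma+2}$ from pairing $|w|^{2\sigma+1}$ (with $w\sim z\cdot\xi$) against $\phi^{\lambda}=\mathcal{O}(\delta_{\infty})$; $|z|^{5}$ from the genuine fifth-order polynomial remainder weighted by $\phi^{\lambda}\cdot(\phi^{\lambda})^{2\sigma-3}$; $\delta_{\infty}^{2\sigma-1}|z|\,\|\langle x\rangle^{-4}R_{\geq 4}\|_{2}$ from the piece linear in $R_{\geq 4}$, whose coefficient is of the form $(\phi^{\lambda})^{2\sigma-1}(z\cdot\xi)\phi^{\lambda}$; and $\|\langle x\rangle^{-4}R_{\geq 4}\|_{2}^{2}$ from the piece quadratic in $R_{\geq 4}$.

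The principal obstacle is the pointwise handling of the Taylor remainder for non-integer $\sigma$, that is, avoiding the singularity at $\phi^{\lambda}=0$, which forces the two-region split above and the simultaneous appearance of the polynomial and pure-power bounds. Once this split is in place, organising each combinatorial product of substituted $I_{j}$ into the correct summand of~\eqref{eq:SigmaGeq1} or~\eqref{eq:JN5sigma}, and verifying the powers of $\delta_{\infty}$ through Proposition~\ref{Prop:Parameters}, is mechanical but unavoidably tedious; this is presumably why the authors elect to omit the detailed bookkeeping.
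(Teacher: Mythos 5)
Your proposal is correct and takes essentially the same approach as the paper: both expand around $\phi^{\lambda}$, split $\mathbb{R}^3$ into a region where the perturbation is dominated by $\phi^{\lambda}$ (Taylor remainder with careful tracking of negative powers of $\phi^{\lambda}$, traded for powers of $|w|$ via the smallness condition) and its complement (crude bound by $|w|^{2\sigma+1}$), with the extra $\phi^{\lambda}$ from the pairing in~\eqref{eq:JN5sigma} absorbing the borderline singular weight. The only cosmetic difference is that the paper's split is $(\phi^{\lambda})^{2}\ge 4|\phi^{\lambda}(\alpha\cdot\xi)|+2|I|$, separating $\alpha\cdot\xi$ from the rest of $I$ and then handling $2\sigma-3\gtrless 0$ by cases, whereas you package this as the single condition $|w|\le\phi^{\lambda}$ with $w=I_{1}+iI_{2}$; the resulting pointwise bound $(\phi^{\lambda})^{(2\sigma-3)_{+}}|w|^{4}+|w|^{2\sigma+1}$ is equivalent.
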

\begin{proof}
As in the proof of the case $\sigma=1$ the basic idea in proving ~\eqref{eq:SigmaGeq1} and ~\eqref{eq:JN5sigma} is to Taylor expand the function $JN$ in $z$ and $\bar{z}$.
What makes the present situation different is that if $\sigma\not\in \mathbb{N}$, then the nonlinearity $|\psi|^{2\sigma}\psi$ is not smooth at $\psi=0$. Technically the decomposition of $
\psi $ in ~\eqref{eq:decom} makes $|\psi|^{2\sigma}$ to be of the form $$|\psi|^{2\sigma}(x,t)=|(\phi^{\lambda})^2(x)+\epsilon(x,t)|^{\sigma}$$ for some $\epsilon\in H^2(\mathbb{R}).$ Since the inequality $|\epsilon|<(\phi^{\lambda})^2$ does not hold for all $x\in \mathbb{R}^{3},$ we find that after expanding in $\epsilon$ to certain orders some undesired negative powers of $\phi^{\lambda}$ will be encountered if $\sigma\not\in \mathbb{N}$. To prevent the negative powers from appearing in the final form we have to adopt some tricks, namely compare the sizes of $
\phi^{\lambda}$ and $\epsilon$ and discuss several regimes.

Now we start proving the proposition. Recall that $|\psi|^{2\sigma}=[(\phi^{\lambda})^2+2\phi^{\lambda}(\alpha\cdot\xi)+I]^{\sigma}$ with $I:=2\phi^\lambda (I_1-\alpha\cdot\xi)+I_1^2+I_2^2$, $I_1$ and $I_2$ defined in ~\eqref{JvecN}.
To control the remainder of the expansion around $\phi^{\lambda}$ we consider separately two regimes $$(\phi^{\lambda})^2 \geq 4|\phi^{\lambda}(\alpha\cdot\xi)|+2|I|\ \text{and}\ (\phi^{\lambda})^2 < 4|\phi^{\lambda}(\alpha\cdot\xi)|+2|I|.$$

For the second regime we have
\begin{equation}\label{eq:secRegime}
\begin{array}{lll}
|J\vec{N}-\displaystyle\sum_{m+n=2,3}JN_{m,n}|
&\leq& |J\vec{N}|+|\displaystyle\sum_{m+n=2,3}JN_{m,n}|\\
&\leq &|z|^{2\sigma+1}+|z|^{4}+ |\vec{R}|^{2\sigma+1}.
\end{array}
\end{equation}

For the first regime i.e.
\begin{equation}\label{eq:bounds}
(\phi^{\lambda})^2 \geq 4|\phi^{\lambda}(\alpha\cdot\xi)|+2|I|.
\end{equation} we only study one term $(\phi^{\lambda})^{2\sigma+1}\mathcal{O}(\epsilon^4)$ with  $$\epsilon:=(\phi^{\lambda})^{-2}[2\phi^{\lambda}(\alpha\cdot\xi)+I]\leq \frac{1}{2}.$$ It is easy to see that this term is the remainder after expanding $\epsilon$ to the third order.

We claim that
\begin{equation}\label{eq:desired2}
|(\phi^{\lambda})^{2\sigma+1}\mathcal{O}(\epsilon^4)|\lesssim |\alpha\cdot \xi|^{2\sigma+1}+ |\alpha\cdot\xi|^4+|I|^{\sigma+\frac{1}{2}}.
\end{equation} We compute directly to obtain
$$
\begin{array}{lll}
|(\phi^{\lambda})^{2\sigma+1} \mathcal{O}(|\epsilon|^4)|&\lesssim &(\phi^{\lambda})^{\sigma-7} I^4+ (\phi^{\lambda})^{2\sigma-3}(\alpha\cdot\xi)^4.
\end{array}
$$ Apply ~\eqref{eq:bounds} to control the first term on the right hand side $$(\phi^{\lambda})^{\sigma-7} I^4\lesssim |I|^{\sigma+\frac{1}{2}}.$$ To bound the second term we have two possibilities: $2\sigma-3\geq 0$ and $2\sigma-3<0.$ For the first we have
$$(\phi^{\lambda})^{2\sigma-3}|(\alpha\cdot\xi)^4|\lesssim (\alpha\cdot\xi)^4,$$ hence ~\eqref{eq:desired2} holds trivially; for the second apply ~\eqref{eq:bounds} to obtain
$$(\phi^{\lambda})^{2\sigma-3}(\alpha\cdot\xi)^4=|\alpha\cdot\xi|^{2\sigma+1} (\phi^{\lambda})^{2\sigma-3}  |\alpha\cdot\xi|^{3-2\sigma}\lesssim |\alpha\cdot \xi|^{2\sigma+1}.$$ By collecting the estimates above we prove ~\eqref{eq:desired2}.

This completes the proof for the first regime, and moreover this together with ~\eqref{eq:secRegime} completes the proof for ~\eqref{eq:SigmaGeq1}.

Now we turn to ~\eqref{eq:JN5sigma}. Here the function $JN$ has to be expanded to one more order to obtain the desired estimate. This is enabled by the fact the function was taken inner product with function $\phi^{\lambda}$. The technique is similar to the proof of ~\eqref{eq:SigmaGeq1}, hence we omit the details here. The proof is complete.
\end{proof}

\section{Proof of Proposition ~\ref{prop:useful}, Equations ~\eqref{eq:KeyTermRef}, ~\eqref{eq:ref1} and ~\eqref{eq:ref2}}\label{SEC:DetailInfo}
Proposition ~\ref{prop:useful} and Equations  ~\eqref{eq:KeyTermRef}, ~\eqref{eq:ref1} and ~\eqref{eq:ref2} are included in Propositions ~\ref{Prop:KeyEst} and ~\ref{Prop:Majorants} below.
 Recall the definitions of $\delta_{\infty}$, $R_{\geq 4}$ in ~\eqref{eq:defDelta}, ~\eqref{dif:R4} respectively.
\begin{proposition}\label{Prop:KeyEst} If  $\frac{|z_{0}|}{\delta_{\infty}}\ll 1$ for $\sigma=1$ and for $\sigma>1$, $|z_{0}|\leq\delta_{\infty}^{C(\sigma)}$ with $C(\sigma)$ sufficiently large,
 then the following results hold: $\vec{R}$ and $R_{\geq 4}$ satisfy the estimates
\begin{align}
\|\vec{R}\|_{2}^{2}&\lesssim  |z_{0}|^{2}\label{eq:L2NormR},\\
\|\vec{R}\|_{\infty}&\lesssim |z_{0}|^{2}(1+t)^{-\frac{3}{2}}+\delta_{\infty}^{2\sigma-1}|z(t)|^2,\label{eq:LinftyNormR}\\
\|\langle x\rangle^{-4 }\vec{R}\|_{H^{2}} &\lesssim |z_{0}|^{2}(1+t)^{-\frac{3}{2}}+\delta_{\infty}^{2\sigma-1}
|z(t)|^2.\label{eq:Sobolev}
\end{align} If $\sigma=1$ then
\begin{equation}
\|\langle x\rangle^{-4 }R_{\geq 4}\|_{2}\lesssim |z_{0}|^{2}(1+t)^{-\frac{3}{2}}+\delta_{\infty}|z_{0}|^{2}|z(t)|^2\label{eq:BetterEstR};
\end{equation} and if $\sigma>1$ then
\begin{equation}
\|\langle x\rangle^{-4 }R_{\geq 4}\|_{2}\lesssim |z_{0}|^{2}(1+t)^{-\frac{3}{2}}+[|z_{0}|^{2}+|z_{0}|^{2\sigma-1}]|z(t)|^2\label{eq:BetEstRsigma}.
\end{equation}
\end{proposition}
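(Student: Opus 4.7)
The $L^2$ bound \eqref{eq:L2NormR}, the weighted and $L^\infty$ decay bounds \eqref{eq:LinftyNormR}--\eqref{eq:Sobolev} on $\vec R$, and the $z$-decay \eqref{eq:uppZ} are already established in \GW; my task is to prove the sharpened remainder bounds \eqref{eq:BetterEstR}--\eqref{eq:BetEstRsigma} for $R_{\geq 4}$ in a manner consistent with those prior bounds. The global plan is a simultaneous bootstrap: I define majorants equal to the left-hand sides of \eqref{eq:L2NormR}--\eqref{eq:BetEstRsigma} divided by the corresponding target right-hand sides and show that each satisfies an integral inequality of the form ``majorant $\le 1 + (\mbox{small})\cdot\mbox{majorant}$'', where ``small'' is controlled by $|z_0|/\delta_\infty$ (respectively by $|z_0|/\delta_\infty^{C(\sigma)}$ when $\sigma>1$).

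The key new step is to derive an evolution equation for $R_{\geq 4}=\vec R-\sum_{m+n=2,3}R_{m,n}$. Using \eqref{eq:Rform} and \eqref{eq:difRgeq3}, the spectral identity $(L(\lambda)+iE(\lambda)(m-n))R_{m,n}=P_c^\lambda[JN_{m,n}+X_{m,n}]$, and the leading dynamics $\dot z = -iE(\lambda)z + Z_{2,1} + \cO(\cdots)$ from \eqref{eq:ZNequation}, a direct computation yields $\partial_t R_{\geq 4} = L(\lambda) R_{\geq 4} - P_c^\lambda\bigl[J\vec N - \sum_{m+n=2,3} JN_{m,n}\bigr] + \mathrm{Corr}$, where $\mathrm{Corr}$ collects the $\dot\lambda\,\partial_\lambda R_{m,n}$ anomalies, the corrections from $\dot z+iE(\lambda)z-Z_{2,1}$, and the $L_{(\dot\lambda,\dot\gamma)}\vec R+\mathcal G$ contributions of \eqref{RAfProj}. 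For $\sigma=1$, Proposition \ref{Prop:NonlinearRem} bounds the bracketed source in weighted $L^2$ by $\delta_\infty|z|^4 + \delta_\infty|z|\,\|\langle x\rangle^{-4}R_{\geq 4}\|_2 + \delta_\infty\|\langle x\rangle^{-4}\vec R\|_2^2$; the $\mathrm{Corr}$ terms are of the same or smaller weighted-$L^2$ order using Proposition \ref{Prop:Parameters} together with the majorant bounds of Proposition \ref{Prop:Majorants}.

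Applying Duhamel together with the weighted dispersive estimate $\|\langle x\rangle^{-4} e^{L(\lambda)t} P_c^\lambda f\|_2 \lesssim \langle t\rangle^{-3/2}\|\langle x\rangle^{4} f\|_{H^2}$ from \GW, the claim reduces to controlling $\int_0^t\langle t-s\rangle^{-3/2}\delta_\infty|z(s)|^4\,ds$ plus the initial contribution $\langle t\rangle^{-3/2}\|R_{\geq 4}(0)\|_{\cH^{2,4}}\lesssim|z_0|^2(1+t)^{-3/2}$ (since $\vec R(0)=R_0$ with $\|\langle x\rangle^4 R_0\|_{H^2}\lesssim|z_0|^2$ and $\sum_{m+n=2,3}R_{m,n}(0)=\cO(|z_0|^2)$). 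I split the Duhamel integral at $s=t/2$: on $[0,t/2]$ dominate $|z(s)|^4\le|z_0|^2|z(s)|^2$ and extract $\langle t-s\rangle^{-3/2}\le\langle t/2\rangle^{-3/2}$, integrating $|z(s)|^2$ in $s$ via \eqref{eq:uppZ}; on $[t/2,t]$ the monotonicity in \eqref{eq:uppZ} gives $|z(s)|\sim|z(t)|$ while the kernel integrates to $\cO(1)$. Combining with $|z(t)|^4\le|z_0|^2|z(t)|^2$ and $\langle t\rangle^{-3/2}\lesssim \delta_\infty^{-2}|z(t)|^2$ on $t\gtrsim\delta_\infty^{-2}$, everything consolidates to the claimed $\delta_\infty|z_0|^2|z(t)|^2+|z_0|^2(1+t)^{-3/2}$, proving \eqref{eq:BetterEstR}.

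The main obstacle is closing the bootstrap: each of the five inequalities depends on the others through nonlinear source terms, so the majorants must be normalized by the \emph{precise} target right-hand sides in order that the resulting coupled system be contractive under the smallness hypothesis $|z_0|/\delta_\infty\ll 1$. A secondary difficulty is specific to $\sigma>1$: $JN_{\geq 4}$ then involves negative powers of $\phi^\lambda$ which are not pointwise dominated by the perturbation, forcing the regime-splitting argument in the proof of Proposition \ref{Prop:NonlinearRem2}. The stronger hypothesis $|z_0|\le\delta_\infty^{C(\sigma)}$ is chosen precisely to absorb the losses incurred there, and the same Duhamel-plus-splitting scheme then yields \eqref{eq:BetEstRsigma}.
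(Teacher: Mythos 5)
Your proposal follows the paper's proof essentially verbatim: a simultaneous bootstrap on the majorants $\mathcal{M}_1,\dots,\mathcal{M}_4$ (Propositions~\ref{Prop:Majorants}--\ref{Prop:KeyEst}), the evolution equation~\eqref{eq:Rge4} for $R_{\geq 4}$ sourced via the $Loc+NonLoc$ decomposition of Proposition~\ref{Prop:NonlinearRem} and the correction $\mathcal{G}_1$, Duhamel against the weighted dispersive bounds~\eqref{eq:SingularEst}--\eqref{eq:Regular}, and a $t/2$ split to handle the convolution (Lemma around~\eqref{eq:IntegralKernel}). Two details you elide, both present in the paper: the comparison $|z(s)|\sim|z(t)|$ on $[t/2,t]$ requires the two-sided bound~\eqref{eq:ControlZ} from the bootstrap hypothesis rather than the one-sided~\eqref{eq:uppZ} you cite, and because $\lambda(t)$ is time-dependent the generator must be frozen at $\lambda_1=\lambda(T)$ with the accumulated phase $a(t,s)(P_+-P_-)$ peeled off, so the propagator is not literally $e^{L(\lambda)t}P_c^\lambda$ but the modified one in~\eqref{estimater21r22}.
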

The proposition will be proved shortly.

We prepare for the proof by defining some functions. To bound various functions in the proposition we define the following estimating functions:
\begin{equation}
\begin{array}{lll}
\mathcal{M}_{1}(T)&:=&\displaystyle\max_{0\leq t\leq T}\|\vec{R}(t)\|_{\infty}[|z_{0}|^{2}(1+t)^{-\frac{3}{2}}+\delta_{\infty}^{2\sigma-1}
|z(t)|^2]^{-1},\\
\mathcal{M}_{2}(T)&:=&\displaystyle\max_{0\leq t\leq T}\|\langle x\rangle^{-4 }\vec{R}(t)\|_{H^2}[|z_{0}|^{2}(1+t)^{-\frac{3}{2}}+\delta_{\infty}^{2\sigma-1} |z(t)|^2]^{-1},\\
\mathcal{M}_{4}(T)&:=&\displaystyle \max_{0\leq t\leq T} \|\vec{R}\|_{2}|z_{0}|^{-1}.
\end{array}
\end{equation}
For $\sigma=1$ we define $$\mathcal{M}_{3}(T):=\displaystyle\max_{0\leq t\leq T}\|\langle x\rangle^{-4 }R_{\geq 4}(t)\|_{2}[|z_{0}|^{2}(1+t)^{-\frac{3}{2}}+\delta_{\infty}|z_{0}|^2|z(t)|^2]^{-1};$$
for $\sigma>1$
$$\mathcal{M}_{3}(T):=\displaystyle\max_{0\leq t\leq T}\|\langle x\rangle^{-4 }R_{\geq 4}(t)\|_{2}[|z_{0}|^{2}(1+t)^{-\frac{3}{2}}+(|z_{0}|^2+|z_{0}|^{2\sigma-1})
|z(t)|^2]^{-1}.$$

In the present paper we use $|z|$ as a gauge to measure sizes of different functions. This makes it necessary to obtain lower and upper bounds for $|z|$.
Recall the definition $\Gamma(z,\bar{z})\equiv\Gamma^{\lambda}(z,\bar{z})$ in ~\eqref{energy-id}.
By Equation ~\eqref{eq:LambdaPhi2} and the assumption (FGR) there exist constants $C_\pm$ such that
\begin{equation}\label{eq:LowerUpperBound}
C_{+}\delta_{\infty}^{2(2\sigma-1)} |z|^4\leq 2  z^* \Gamma^{\lambda}(z,\bar{z})z\leq C_{-}\delta_{\infty}^{2(2\sigma-1)}|z|^4.
\end{equation} We define the upper and lower bounds $z_{\pm}$ by
\begin{equation}\label{eq:difZpm}
z_{\pm}(t):=(|z_{0}|^{-2}+C_{\pm}\delta_{\infty}^{2(2\sigma-1)} t)^{-\frac{1}{2}}.
\end{equation}

Recall the equations for $\dot\lambda$, the definitions for $\Psi$ and $\mathcal{K}$ in ~\eqref{eq:DetailLambda} and ~\eqref{eq:ZNequation}, and recall the equation for $\dot\gamma$ in ~\eqref{eq:gamma}, and the definition of $\Upsilon_{1,1}$ in ~\eqref{eq:Gamma11}. In the rest of the paper we use $Remainder$ to represent different terms satisfying the estimate
\begin{equation}\label{eq:Remainder}
|Remainder|\lesssim |z(t)|^{4}+\|\langle x\rangle^{-4 }\vec{R}\|_{2}^{2}+|z(t)|\|\langle x\rangle^{-4 }R_{\geq 4}\|_{2}.
\end{equation}

Define a constant $\epsilon_{\infty}$ by $$\epsilon_{\infty}:=\displaystyle\max_{0\leq t< \infty}\|\vec{R}(t)\|_{H^2}.$$ By ~\eqref{eq:stability} it is a small constant. The result is
\begin{proposition}\label{Prop:Majorants}
Suppose that $\frac{|z_{0}|}{\delta_{\infty}}\ll 1$ for $\sigma=1$ and $|z_{0}|=\delta_{\infty}^{C(\sigma)}$ for $\sigma>1$ with $C(\sigma)$ being sufficiently large. Then $$\text{if}\ \displaystyle\sum_{k=1}^{4}\mathcal{M}_{k}\leq (\delta_\infty+\epsilon_{\infty})^{-\frac{1}{2}}\ \text{and}\ 10|z_{+}|\geq |z|\geq \frac{1}{10}|z_{-}|\ \text{in the time interval}\ [0,\delta],$$ then in the same interval the following results hold:
\begin{itemize}
\item[(1)]
The function $|z|$ admits lower and upper bounds
\begin{equation}\label{eq:ControlZ}
\frac{1}{5}|z_{-}(t)|\leq |z(t)|\leq\ 5|z_{+}(t)|\ \text{for any time}\ t.
\end{equation}
\item[(2)]
The functions $\mathcal{K}$, $\Psi$, $\dot\lambda$ and $\dot\gamma-\Upsilon_{1,1}$ satisfy the following estimates
\begin{equation}\label{eq:KeyTerm1}
|\Psi|\lesssim |z|\delta_{\infty}^{2\sigma-1}\|\langle x\rangle^{-4}R_{\geq 4}\|_{2}+\|\langle x\rangle^{-4}R_{\geq 4}\|_{2}^2+\delta_{\infty}^{2\sigma-1}|z|^5,
\end{equation}
if $\sigma=1$ then
\begin{equation}\label{eq:RoughEstLambda}
|\dot\lambda|,\ |\mathcal{K}| \lesssim \delta_{\infty}\ Remainder,
\end{equation}
\begin{equation}\label{eq:RoughEstGamma}
|\dot\gamma-\Upsilon_{1,1}| \lesssim  Remainder.
\end{equation}
If $\sigma>1$ then
\begin{equation}\label{eq:sigmaG1}
|\dot\lambda |,\ |\dot\gamma-\Upsilon_{1,1}|,\ |\mathcal{K}|\lesssim |z|^{2\sigma+1}+Remainder.
\end{equation}
\item[(3)]
\begin{align}
\mathcal{M}_{1}(T)&\lesssim 1+\delta_{\infty}[\mathcal{M}_{2}(T)+\mathcal{M}_{2}^2(T)]+\epsilon_{\infty}\mathcal{M}_{1}(T),\label{eq:EstM1}\\
\mathcal{M}_{2}(T)&\lesssim 1+\delta_{\infty}[\mathcal{M}_{2}(T)+\mathcal{M}_{2}^2(T)]+\epsilon_{\infty}\mathcal{M}_{1}(T),\\
\mathcal{M}_{3}(T)&\lesssim 1+\mathcal{M}_{4}(T)\mathcal{M}_{1}^2(T)+\mathcal{M}_4^2(T)\mathcal{M}_1(T)+\delta_{\infty}(\mathcal{M}_{3}(T)+\mathcal{M}_{3}^2(T))
\label{eq:EstM3},\\
\mathcal{M}_4^2(T) &\lesssim 1+|z_{0}|[\mathcal{M}_{3}^2(T)+\mathcal{M}_1^{2\sigma}(T)\mathcal{M}_4^2(T)]\label{eq:EstM4}.
\end{align}
\end{itemize}
\end{proposition}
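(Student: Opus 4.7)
The plan is a continuity/bootstrap: under the standing hypotheses $\sum_k \mathcal{M}_k \le (\delta_\infty+\epsilon_\infty)^{-1/2}$ and $\tfrac{1}{10}|z_-|\le |z|\le 10|z_+|$ on $[0,\delta]$, I will recover improved versions of these bounds together with the pointwise estimates \eqref{eq:KeyTerm1}--\eqref{eq:sigmaG1}. I proceed in the order (2), (1), (3): bounds on $\mathcal{K}$ are needed to sharpen the $z$-envelope, and bounds on $\dot\lambda,\dot\gamma-\Upsilon_{1,1},\mathcal{K}$ are then needed to close the Duhamel estimates for $\mathcal{M}_i$.

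For part (2), I return to the defining formulas \eqref{eq:lambda}, \eqref{eq:gamma}, and the explicit expression for $\mathcal{K}_n$ following \eqref{eq:ZNequation}, substitute the decomposition \eqref{eq:decom}, and organize terms by order in $z,\bar z$. The normal-form polynomials \eqref{eq:A1}--\eqref{eq:pq20and02} were chosen precisely to cancel the resonant low-order contributions, so what survives is controlled by the $Remainder$ of \eqref{eq:Remainder} (with an extra $\delta_\infty$ prefactor for $\dot\lambda$ and $\mathcal{K}$ coming from $\phi^\lambda$-factors in the inner products, via Proposition \ref{Prop:Parameters}) when $\sigma=1$, or by this quantity plus $|z|^{2\sigma+1}$ when $\sigma>1$, the latter originating in the non-smooth Taylor remainder controlled by Proposition \ref{Prop:NonlinearRem2}. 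The bound \eqref{eq:KeyTerm1} on $\Psi$ follows term by term: the $(\dot\gamma-\Upsilon_{1,1})$- and $\dot\lambda$-prefactored pieces are bounded by inserting \eqref{eq:RoughEstLambda}--\eqref{eq:sigmaG1}; the $\Upsilon_{1,1}\langle R_{\ge 4},(0,\phi^\lambda)^T\rangle$ term acquires the factor $\delta_\infty^{2\sigma-1}|z|$ via $|\Upsilon_{1,1}|\lesssim \delta_\infty^{2\sigma-2}|z|^2$ and $\|\phi^\lambda\|_\infty\lesssim\delta_\infty$; and the quintic $\delta_\infty^{2\sigma-1}|z|^5$ comes from $\langle JN_{\ge 5},(\phi^\lambda,0)^T\rangle$ via \eqref{eq:InnerJN5}. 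With $\mathcal{K}$ controlled, the sharp envelope \eqref{eq:ControlZ} of part (1) follows from $\partial_t|z|^2 = -2 z^*\Gamma(z,\bar z)z + 2\Re(z^*\mathcal{K})$: the bootstrap smallness of $\sum\mathcal{M}_k$ forces $|z^*\mathcal{K}|$ to be $o(1)$ relative to the main term $\delta_\infty^{4\sigma-2}|z|^4$ of \eqref{eq:LowerUpperBound}, so the Bernoulli identity $\partial_t |z|^{-2}= C_\pm \delta_\infty^{4\sigma-2}(1+o(1))$ together with comparison to \eqref{eq:difZpm} upgrades the constants $\tfrac{1}{10}, 10$ of the hypothesis to $\tfrac{1}{5}, 5$ of the conclusion.

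For part (3), I apply Duhamel's formula to \eqref{RAfProj} after subtracting the explicit resonant pieces $R_{m,n}$ of \eqref{eq:Rform} and \eqref{eq:difRgeq3}, obtaining a Duhamel representation for $R_{\ge 4}$ whose source contains $JN_{\ge 4}$, the higher-order tail $JN_{\ge 5}$, and the corrections $L_{(\dot\lambda,\dot\gamma)}\vec R$ and $\mathcal{G}$ of \eqref{Lgdld}--\eqref{Gdef}. Then I invoke the $L^\infty$-dispersive and weighted local-$L^2$ decay estimates for $e^{L(\lambda)t}P_c^\lambda$ developed in \GW, substitute the step-(2) bounds on $\dot\lambda,\dot\gamma-\Upsilon_{1,1}$ together with Propositions \ref{Prop:NonlinearRem}--\ref{Prop:NonlinearRem2}, and divide by the normalizing weight that defines each $\mathcal{M}_i$. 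The constant $1$'s on the right of \eqref{eq:EstM1}--\eqref{eq:EstM3} come from the $|z|^2$-sized soliton-driven direct source; the $\delta_\infty\mathcal{M}_i$ and $\epsilon_\infty\mathcal{M}_1$ contributions arise from cubic and higher sources where each $\vec R$-factor is bounded in $L^\infty$ by $\epsilon_\infty\mathcal{M}_1$ or in the weighted norm by $\delta_\infty\mathcal{M}_2$. For \eqref{eq:EstM4} I use the near-conservation of $\|\vec R\|_2^2$: integrating $\tfrac{d}{dt}\|\vec R\|_2^2$ against the source terms and invoking step (2) closes the bound.

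The main obstacle is the careful power-counting needed to close the bootstrap: every $\delta_\infty$ or $\epsilon_\infty$ prefactor on the right of \eqref{eq:EstM1}--\eqref{eq:EstM4} must be traced to a specific source, either a $\phi^\lambda$-factor in an inner product, the $\mathcal{O}(\delta_\infty^{2\sigma})$-perturbation of the Riesz projection \eqref{eq:projection}, or the $L^\infty$-control $\|\vec R\|_\infty\le \epsilon_\infty\mathcal{M}_1\cdot[\,\text{weight}\,]$. The most delicate accounting is in \eqref{eq:EstM3}, where the nonlocal cubic term $[R_1^2+R_2^2]J\vec R$ of Proposition \ref{Prop:NonlinearRem} cannot be absorbed by local-decay estimates alone, and its handling forces the simultaneous use of the weighted $H^2$-bound (producing the $\mathcal{M}_4\mathcal{M}_1^2$ contribution) and of the pointwise $L^\infty$-bound (producing the $\mathcal{M}_4^2\mathcal{M}_1$ contribution); verifying that these cross-terms stay below the allotted weight, without leaking extra powers of $|z_0|^{-1}$, is the central technical challenge.
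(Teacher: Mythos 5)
Your high-level architecture (prove part (2) first, then part (1), then part (3) via Duhamel) matches the paper's, and your description of the $\mathcal{M}_3$, $\mathcal{M}_4$ bounds is roughly on track. However there are two concrete gaps.

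\textbf{Part (1): the remainder is \emph{not} pointwise small.} You assert that ``the bootstrap smallness of $\sum\mathcal{M}_k$ forces $|z^*\mathcal{K}|$ to be $o(1)$ relative to the main term $\delta_\infty^{4\sigma-2}|z|^4$,'' and then close via a Bernoulli comparison. This step is false near $t=0$, and the paper explicitly says so: ``Observe that $|Re\, z^* \cdot Remainder|$ is NOT a higher order correction of $Re\, z^* \Gamma(z,\bar z)z = \mathcal{O}(\delta_\infty^2|z|^4)$ in a neighborhood of $t=0$.'' Indeed, the bootstrap hypothesis only gives $\mathcal{M}_k \lesssim (\delta_\infty+\epsilon_\infty)^{-1/2}$ (so the $\mathcal{M}_k$ can be \emph{large}), and with $\|\langle x\rangle^{-4}R_{\geq 4}\|_2 \lesssim \mathcal{M}_3 |z_0|^2 \lesssim \delta_\infty^{-1/2}|z_0|^2$ at $t\approx 0$, the cross term $|z|\delta_\infty\cdot|z|\cdot\|R_{\geq 4}\|_2 \sim \delta_\infty^{1/2}|z_0|^4$ \emph{exceeds} the FGR term $\delta_\infty^2|z_0|^4$. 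The paper's fix requires two ideas you omit: first, a two-interval split at $t = \delta_\infty^{-2}|z_0|^{-2}$; second, on the early interval, replacing $|z|^2$ by $|\tilde z|^2 := |z|^2\exp\bigl(-\int_0^t 2\Re(z^*\cdot Remainder)/|z|^2\,ds\bigr)$ and showing the cumulative integral (not the pointwise ratio) is $\mathcal{O}(|z_0|/\delta_\infty)\ll 1$. Only on the late interval, where $(1+t)^{-1}\le 2\delta_\infty^2(|z_0|^{-2}+\delta_\infty^2 t)^{-1}$, does the remainder become a genuine pointwise correction. Without this two-regime argument, the bootstrap does not close.

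\textbf{Part (2): the system is implicit and must be decoupled.} The right-hand sides $F_3, F_4$ of \eqref{eq:gamma}, \eqref{eq:lambda} themselves contain $\dot\lambda$ and $\dot\gamma$, so ``organize terms by order in $z,\bar z$'' does not directly produce estimates on $\dot\lambda$ and $\dot\gamma-\Upsilon_{1,1}$. The paper assembles these equations together with Lemma~\ref{LM:apremainder} into a matrix identity $[Id+\Pi](\dot\lambda,\ \dot\gamma-\Upsilon_{1,1})^T = \Omega + Remainder\,(\delta_\infty,1)^T$ and inverts by Neumann series. The non-obvious part is that $\Pi$ is small despite the small denominator $\langle\phi^\lambda,\partial_\lambda\phi^\lambda\rangle$: the dangerous off-diagonal entry $\langle R_2,\partial_\lambda^2\phi^\lambda\rangle / \langle\phi^\lambda,\partial_\lambda\phi^\lambda\rangle$ is tamed only by combining the symplectic orthogonality $R_2\perp\partial_\lambda\phi^\lambda$ of \eqref{s-Rorthogonal} with the near-collinearity of $\partial_\lambda\phi^\lambda$ and $\partial_\lambda^2\phi^\lambda$ from \eqref{eq:LambdaPhi2}, giving $\mathcal{O}(|z_0|^{1/2})$ as in \eqref{eq:almostOrtho}. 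Your sketch omits both the matrix inversion and this near-orthogonality cancellation. (Additionally, a minor mislabeling: in your discussion of the nonlocal term in \eqref{eq:EstM3}, the $\mathcal{M}_4\mathcal{M}_1^2$ piece comes from $\|\vec R\|_\infty^2\|\vec R\|_2$, i.e.\ the unweighted $L^2$ norm $\mathcal{M}_4$ and the $L^\infty$ norm $\mathcal{M}_1$, not the weighted $H^2$ norm $\mathcal{M}_2$.)
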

The estimates ~\eqref{eq:ControlZ}-~\eqref{eq:sigmaG1}, ~\eqref{eq:EstM3} and ~\eqref{eq:EstM4} of the proposition will be proved in later sections. The proofs of the other two estimates are almost the same to the corresponding estimates of ~\cite{GaWe}, specifically Propositions 11.3 and 11.5, pp. 299 and 300 respectively, hence are omitted here.
\bigskip

{\bf{Proof of Proposition ~\ref{Prop:KeyEst}}}
By the local well-posedness of ~\eqref{eq:NLS} there exists some interval $[0,\delta]$ such that for any $T\in [0,\delta]$ $$\displaystyle\sum_{k=1}^{4}\mathcal{M}_{k}(T)\leq (\delta_\infty+\epsilon_{\infty})^{-\frac{1}{2}}$$ and $$10|z_{+}|\geq |z|\geq \frac{1}{10}|z_{-}|.$$

Then the estimates ~\eqref{eq:ControlZ}-~\eqref{eq:EstM4} hold in this time interval, which include \begin{equation}\label{eq:zFinal}
 5|z_{+}|\geq |z|\geq \frac{1}{5}|z_{-}|.
\end{equation}

Now we turn to the estimates on $\mathcal{M}_{k},\ k=1,2,3,4.$
By substituting estimates of $\mathcal{M}_1$ and $\mathcal{M}_4$ of ~\eqref{eq:EstM1} and ~\eqref{eq:EstM4} into ~\eqref{eq:EstM3} we obtain $$ \mathcal{M}_{3}(T)\leq 1+(\epsilon_{\infty}+\delta_{\infty}+|z_{0}|)P(\mathcal{M}_1(T),\cdots,\mathcal{M}_4(T))$$ with $P(x_1,\cdot,x_4)$ being some polynomial of positive coefficient. This together with the other estimates implies
\begin{equation}\label{eq:M}
\mathcal{M}\lesssim 1+(\epsilon_{\infty}+\delta_{\infty}+|z_{0}|) P_1(M)
\end{equation} with $\mathcal{M}:=\sum_{k=1}^{4}\mathcal{M}_{k}$ and $P_{1}$ being some polynomial.

By the assumption on the initial condition we find $\displaystyle\sum_{n=1}^4 \mathcal{M}_{n}(0)\lesssim 1$, which together with ~\eqref{eq:M} implies
\begin{equation}\label{eq:BoundMajorant}
\mathcal{M}\lesssim 1\ \text{in}\ [0,\delta].
\end{equation}

~\eqref{eq:BoundMajorant}, ~\eqref{eq:zFinal} and the local wellposedness of ~\eqref{eq:NLS} imply that ~\eqref{eq:ControlZ}-~\eqref{eq:EstM4} hold in a larger interval $[0,\delta_2],$ so do ~\eqref{eq:BoundMajorant} and ~\eqref{eq:zFinal}.

By induction on the time interval we prove ~\eqref{eq:BoundMajorant} holds on the interval $[0,K]$ for any $K>0$, hence ~\eqref{eq:BoundMajorant} and  ~\eqref{eq:ControlZ}-~\eqref{eq:sigmaG1} hold in $[0,\infty).$

To conclude the proof the fact $\mathcal{M}\lesssim 1$ in $[0,\infty)$ implies Proposition ~\ref{Prop:KeyEst}.

The proof is complete.
\begin{flushright}
$\square$
\end{flushright}

\subsection{Proof of ~\eqref{eq:KeyTerm1}-~\eqref{eq:sigmaG1}}\label{subsec:equations}
In this subsection we study the derivatives of the normal form transformation, which appear in $\mathcal{K}$ of ~\eqref{eq:ZNequation} and $\Psi$ of ~\eqref{eq:DetailLambda}.

Recall the equation for $z$ in ~\eqref{new-nf1}. Define
\begin{equation}\label{eq:defZ21}
Z_{2,1}:=-\Gamma(z,\bar{z})z+\Lambda(z,\bar{z})z
\end{equation}
and define
$$A_1:=\partial_{t}a_1+iE(\lambda)\displaystyle\sum_{m+n=2,3}(m-n)A_{m,n}^{(1)}+\partial_{z}a_1\cdot Z_{2,1}+\partial_{\bar{z}}a_1\cdot \overline{Z_{2,1}},$$
$$A_2:=\partial_{t}a_2+iE(\lambda)\displaystyle\sum_{m+n=2,3}(m-n)A_{m,n}^{(2)}+\partial_{z}a_2\cdot Z_{2,1}+\partial_{\bar{z}}a_2\cdot \overline{Z_{2,1}},$$ $$P_k:=\partial_{t}p_k+iE(\lambda)\displaystyle\sum_{m+n=2,3}(m-n)P_{m,n}^{(k)}+\partial_{z}p_k\cdot Z_{2,1}+\partial_{\bar{z}}p_k\cdot \overline{Z_{2,1}},$$ $$Q_k:= \partial_{t}q_{k}+iE(\lambda)\displaystyle\sum_{m+n=2,3}(m-n)Q_{m,n}^{(k)}+\partial_{z}q_k\cdot Z_{2,1}+\partial_{\bar{z}}q_k\cdot \overline{Z_{2,1}}.$$

The result is
\begin{lemma}\label{LM:apremainder}
\begin{equation}\label{eq:apq2}
|\mathcal{K}|,\ |A_1|,\ |A_2|,\ |P_k|,\ |Q_k|\lesssim |z|(|\dot\lambda|+|\dot\gamma-\Upsilon_{1,1}|)+\delta_{\infty}^{C(\sigma)}Remainder
\end{equation} with $C(1)=1$, $C(\sigma)=0$ if $\sigma>1,$ and $k=1,2,\cdots,N.$
\end{lemma}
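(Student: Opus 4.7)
The plan is to reduce all five quantities to explicit algebraic expressions in $\dot\lambda$, $\dot\gamma-\Upsilon_{1,1}$, $Z_{2,1}$, the remainder $\mathcal{K}$ itself, and the nonlinearity remainder $J\vec N-\sum_{m+n=2,3}JN_{m,n}$, and then invert the resulting linear system for $\mathcal{K}$. The starting point is the chain rule
\[
\partial_t a_1 \;=\; (\partial_\lambda a_1)\,\dot\lambda \;+\; (\partial_z a_1)\cdot \dot z \;+\; (\partial_{\bar z} a_1)\cdot \overline{\dot z},
\]
together with the normal-form equation $\dot z = -iE(\lambda)z + Z_{2,1} + \mathcal{K}$ from \eqref{eq:ZNequation}. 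Since $a_1 = \sum_{m+n=2,3,\,m\ne n}A^{(1)}_{m,n}$ and each $A^{(1)}_{m,n}$ is $(m,n)$-homogeneous in $(z,\bar z)$, the Euler identities $\partial_z A^{(1)}_{m,n}\cdot z = m A^{(1)}_{m,n}$ and $\partial_{\bar z}A^{(1)}_{m,n}\cdot\bar z = n A^{(1)}_{m,n}$ imply that the $-iE(\lambda)z$ contribution in $\dot z$ cancels the built-in correction $iE(\lambda)\sum(m-n)A^{(1)}_{m,n}$ in the definition of $A_1$. The same manipulation applies verbatim to $A_2$, $P_k$ and $Q_k$, reducing each to a linear combination of $(\partial_\lambda\cdot)\dot\lambda$, $(\partial_z\cdot)\cdot Z_{2,1}$ and $(\partial_z\cdot)\cdot\mathcal{K}$ (plus complex conjugates).

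Next I obtain a stand-alone bound on $\mathcal{K}$. The definition of $\mathcal{K}_n$ contains the brackets $\partial_t p_n + iE(\lambda)\sum(k-l)P^{(n)}_{k,l}$ and $\partial_t q_n + iE(\lambda)\sum(k-l)Q^{(n)}_{k,l}$; applying the same chain-rule reduction turns each bracket into a sum of $\partial_\lambda p_n\cdot\dot\lambda$, $\partial_z p_n\cdot Z_{2,1}$, $\partial_{\bar z}p_n\cdot\overline{Z_{2,1}}$, $\partial_z p_n\cdot\mathcal{K}$ and $\partial_{\bar z}p_n\cdot\overline{\mathcal{K}}$, and likewise for $q_n$. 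Collecting the $\mathcal{K}$-dependent pieces on the left yields a matrix equation $(I+\mathcal{N}(z,\bar z,\lambda))\mathcal{K} = \mathcal{F}$, where $\mathcal{N}$ is built from $\partial_z p,\partial_{\bar z}p,\partial_z q,\partial_{\bar z}q$ and, by Proposition \ref{Prop:Parameters}, satisfies $\|\mathcal{N}\| = O(\delta_\infty^{2\sigma-1}|z|) = o(1)$ in the bootstrap regime. Hence $I+\mathcal{N}$ is uniformly invertible and $\mathcal{K} = (I+O(|z|))\mathcal{F}$. The right-hand side $\mathcal{F}$ splits into three groups: (a) linear-in-$\dot\lambda$ and linear-in-$(\dot\gamma-\Upsilon_{1,1})$ pieces, namely $\partial_\lambda p_n\cdot\dot\lambda$, $\partial_\lambda q_n\cdot\dot\lambda$, the $\dot\lambda[\cdots]$ block, the $[\dot\gamma-\Upsilon_{1,1}][\cdots]$ block, and $\langle\vec R,\dot\lambda(\cdots)-\dot\gamma(\cdots)\rangle$; (b) cubic-in-$z$ pieces $\partial_z p_n\cdot Z_{2,1}$, $\partial_z q_n\cdot Z_{2,1}$ and the $\Upsilon_{1,1}$ block; (c) the nonlinearity remainder $\langle J\vec N-\sum_{m+n=2,3}JN_{m,n},(\eta_n,-i\xi_n)\rangle$. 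Group (a) is estimated by factoring out $\dot\lambda$ and $(\dot\gamma-\Upsilon_{1,1})$; the coefficients are $O(|z|)$ or $O(|z|^2)$ by Proposition \ref{Prop:Parameters} and \eqref{eq:derXi}, producing the $|z|(|\dot\lambda|+|\dot\gamma-\Upsilon_{1,1}|)$ contribution. Group (b) is $O(\delta_\infty^{C(\sigma)}|z|^4)\subset \delta_\infty^{C(\sigma)}\cdot\text{Remainder}$ by the bounds of Proposition \ref{Prop:Parameters} and the definition \eqref{eq:Remainder}. Group (c) is bounded directly by Propositions \ref{Prop:NonlinearRem} and \ref{Prop:NonlinearRem2}, pairing $\|\eta_n\|_\infty,\|\xi_n\|_\infty \lesssim 1$ with the $L^1$ and $L^2$ bounds there.

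With $|\mathcal{K}| \lesssim |z|(|\dot\lambda|+|\dot\gamma-\Upsilon_{1,1}|) + \delta_\infty^{C(\sigma)}\text{Remainder}$ in hand, the identities of the first paragraph immediately yield the same bound for $A_1, A_2, P_k, Q_k$: the $\partial_\lambda a_1\cdot\dot\lambda$ term is $O(|z|^2|\dot\lambda|)\subset |z||\dot\lambda|$, the $\partial_z a_1\cdot Z_{2,1}$ term is $O(\delta_\infty^{C(\sigma)}|z|^4)\subset \delta_\infty^{C(\sigma)}\text{Remainder}$, and $\partial_z a_1\cdot\mathcal{K}$ is $O(\delta_\infty^{2(2\sigma-1)}|z|)\cdot|\mathcal{K}|$ which inherits the claimed bound. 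The main technical obstacle is bookkeeping the $\delta_\infty$ powers to end up with $C(1)=1$ versus $C(\sigma)=0$. This is the point at which the distinction between Propositions \ref{Prop:NonlinearRem} and \ref{Prop:NonlinearRem2} is essential: for $\sigma=1$ every surviving monomial of $J\vec N - \sum_{m+n\le 3}JN_{m,n}$ couples one explicit factor $\phi^\lambda = O(\delta_\infty)$ to $\vec R^2$, $|z|R_{\ge 4}$ or $|z|^5$, producing the extra $\delta_\infty^1$ prefactor; for $\sigma>1$ the non-smooth expansion around $\phi^\lambda$ may introduce negative powers of $\phi^\lambda$ (cf.\ Remark \ref{remark2}) and no such prefactor survives. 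A second minor obstacle is the uniform invertibility of $I+\mathcal{N}$, which follows from the bootstrap hypothesis $|z|\le 10|z_+|\ll 1$ together with the estimate $\|\mathcal{N}\| = O(\delta_\infty^{2\sigma-1}|z|)$ from Proposition \ref{Prop:Parameters}.
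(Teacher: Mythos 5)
Your proposal is correct and follows essentially the same route as the paper: the paper's proof also reduces $P_k,Q_k,A_1,A_2$ via the chain rule and Euler identity to expressions in $\dot\lambda$ and $\mathcal{K}$ (their bound $|P_k|\leq |\dot\lambda||\partial_\lambda p_k|+|\partial_z p_k||\mathcal{K}|$), and then bounds $\mathcal{K}$ directly from its definition by invoking Propositions \ref{Prop:Parameters}, \ref{Prop:NonlinearRem} and \ref{Prop:NonlinearRem2}, yielding (\ref{eq:estK}). The one place where you do more than the paper is that you make explicit the self-referential structure: since $\mathcal{K}_n$ contains $\partial_t p_n$, the chain rule reintroduces $\partial_z p_n\cdot\mathcal{K}+\partial_{\bar z}p_n\cdot\overline{\mathcal{K}}$ on the right, so one is really solving $(I+\mathcal{N})\mathcal{K}=\mathcal{F}$ with $\|\mathcal{N}\|=O(\delta_\infty^{2\sigma-1}|z|)\ll 1$. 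The paper simply asserts the bound for $\mathcal{K}$ without commenting on this; spelling out the invertibility of $I+\mathcal{N}$ in the bootstrap regime is a genuine (if small) clarification, and your bookkeeping of the $\delta_\infty^{C(\sigma)}$ prefactor against the $\sigma=1$ versus $\sigma>1$ dichotomy of Propositions \ref{Prop:NonlinearRem} and \ref{Prop:NonlinearRem2} matches the paper's intent.
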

\begin{proof}
The definitions of $p_k$ in ~\eqref{eq:pkmn}, the equation for $z$ in ~\eqref{eq:ZNequation} imply
$$|P_k|\leq |\dot\lambda||\partial_\lambda p_k|+ |\partial_z p_k||\mathcal{K}|\leq |\dot\lambda||z|+|z||\mathcal{K}|$$ and $$|Q_k|\leq |\dot\lambda||\partial_\lambda q_k|+ |\partial_z q_k||\mathcal{K}|\leq |\dot\lambda||z|+|z||\mathcal{K}|.$$ Apply Propositions ~\ref{Prop:Parameters}, ~\ref{Prop:NonlinearRem} and ~\ref{Prop:NonlinearRem2} to obtain
\begin{equation}\label{eq:estK}
\begin{array}{lll}
|\mathcal{K}|&\leq &(|z|+\|\langle x\rangle^{-4}\vec{R}\|_2)(|\dot\lambda|+|\dot\gamma-\Upsilon_{1,1}|)+\delta_{\infty}^{C(\sigma)}Remainder
\end{array}
\end{equation} with $C(1)=1$ and $C(\sigma)=0$ if $\sigma>1$,
which together with the estimate above implies the estimates for $\mathcal{K}$, $P_{k}$ and $Q_{k}$ in ~\eqref{eq:apq2}.

By almost identical arguments we produce the estimates for $A_{1}$ and $A_{2}.$

The proof is complete.
\end{proof}
\begin{flushleft}
{\bf{Proof of ~\eqref{eq:KeyTerm1}-~\eqref{eq:sigmaG1}}}
\end{flushleft}
\begin{proof}
We start with estimating $\dot\lambda,\ \dot\gamma-\Upsilon_{1,1}$: as in ~\cite{GaWe} pp. 291-293, we put the equations into a matrix form to find
\begin{equation}\label{eq:lambdagamma}
[Id+\Pi(z,\vec{R},a,p,q)]\left(
\begin{array}{lll}
\dot\lambda\\
\dot\gamma-\Upsilon_{1,1}
\end{array}
\right)=\Omega+Remainder\left(
\begin{array}{lll}
\delta_{\infty}\\
1
\end{array}
\right),
\end{equation} where, recall the definition of $Remainder$ in ~\eqref{eq:Remainder}, and the terms on the right hand side were obtained by the following arguments:
\begin{itemize}
\item[(1)]
the matrix $\Omega$ is defined as
$$
\Omega:=-\left(
\begin{array}{lll}
\partial_{t}a_1+iE(\lambda)\displaystyle\sum_{m+n=2,3}(m-n)A_{m,n}^{(1)},\\
\partial_{t}a_2+iE(\lambda)\displaystyle\sum_{m+n=2,3}(m-n)A_{m,n}^{(2)}
\end{array}
\right)
$$ and is controlled by applying the results in Lemma ~\ref{LM:apremainder}
\begin{equation}
\Omega=\cO[|z|(|\dot\lambda|+|\dot\gamma-\Upsilon_{1,1}|)+\delta_{\infty}Remainder]+\left(
\begin{array}{lll}
\mathcal{O}(\delta_{\infty}|z|^4)\\
\mathcal{O}(|z|^4)
\end{array}
\right);
\end{equation}
\item[(3)]
$Id$ is the $2\times 2$ identity matrix, the matrix $\Pi$ is defined as
$$\Pi(z,\vec{R},a,p,q):=\Pi_1+\Pi_{s} $$ where $\Pi_{s}$ is a matrix depending on $z,\vec{R},a,p$ and $q,$
and satisfies the estimate
$$\|\Pi_{s}(z,\vec{R},a,p,q)\|\lesssim |z_0|+\delta_{\infty}$$
by the conditions $\displaystyle\sum_{k=1}^{4}\mathcal{M}_{k}(t)\leq (\delta_{\infty}+\epsilon_{\infty})^{-\frac{1}{2}}$ of Proposition ~\ref{Prop:Majorants};
the matrix $\Pi_1$ is defined and estimated as $$\Pi_1:=\left(
\begin{array}{lll}
0&0\\
-\frac{\langle R_2,\partial_{\lambda}^2\phi^{\lambda}\rangle}{\langle \phi^{\lambda},\partial_{\lambda}\phi^{\lambda}\rangle},&0
\end{array}
\right)=\mathcal{O}(|z_0|^{\frac{1}{2}}).$$ To prove this we used the observations that $R_2=ReR\perp \partial_{\lambda}\phi^{\lambda}$ in ~\eqref{s-Rorthogonal} and $\partial_{\lambda}\phi^{\lambda}$ and $\partial_{\lambda}^{2}\phi^{\lambda}$ are almost colinear to each other, proved in ~\eqref{eq:LambdaPhi2}. By these and ~\eqref{eq:denominator} we obtain $\frac{1}{\langle \phi^{\lambda},\partial_{\lambda}\phi^{\lambda}\rangle}|\langle R_2,
\partial_{\lambda}^2\phi^{\lambda}\rangle|\leq \delta_{\infty}^{1-2\sigma}\|\langle x\rangle^{-4}\vec{R}\|_{2}.$ The assumption $\mathcal{M}_2\leq \delta_{\infty}^{-\frac{1}{2}}$ in Proposition ~\ref{Prop:Majorants} implies that $\|\langle x\rangle^{-4}\vec{R}\|_{2}\leq |z_0|^{\frac{3}{2}}.$ Consequently
\begin{equation}\label{eq:almostOrtho}
\frac{1}{\langle \phi^{\lambda},\partial_{\lambda}\phi^{\lambda}\rangle}|\langle R_2,
\partial_{\lambda}^2\phi^{\lambda}\rangle|\lesssim \delta_{\infty}^{1-2\sigma}|z_0|^{\frac{3}{2}}\leq |z_0|^{\frac{1}{2}}.
\end{equation}
\item[(2)]
the term $Remainder\left(
\begin{array}{lll}
\delta_\infty\\
1
\end{array}
\right)$ is produced by
$$\frac{1}{\langle\phi^{\lambda},\partial_{\lambda}\phi^{\lambda}\rangle}\left(
\begin{array}{lll}
\Upsilon_{1,1}\langle R_{2},\phi^{\lambda}\rangle+\Upsilon_{1,1}\langle q\cdot \eta,\phi^{\lambda}\rangle-\langle
ImN-\displaystyle\sum_{m+n=2,3}ImN_{m,n},\phi^{\lambda}\rangle\\
-\Upsilon_{1,1}\langle R_{1},\partial_{\lambda}\phi^{\lambda}\rangle-\Upsilon_{1,1}\langle p\cdot \xi,\partial_{\lambda}\phi^{\lambda}\rangle+\langle
ReN-\displaystyle\sum_{m+n=2,3}ReN_{m,n},\partial_{\lambda}\phi^{\lambda}\rangle
\end{array}
\right).$$ To prove this we use the results in Propositions ~\ref{Prop:Parameters} and ~\ref{Prop:NonlinearRem} and the fact $\langle ImN_{1,1},\phi^{\lambda}\rangle=0$. Here the ``almost orthogonality" or ``almost colinear condition" between functions $\xi_{k}$ and $\eta_{k}$, $\phi^{\lambda}$ and $\partial_{\lambda}\phi^{\lambda},$ implied by Proposition ~\ref{Prop:Parameters} were used to approximate the orthogonal conditions ~\eqref{eq:Orthogonality} and ~\eqref{s-Rorthogonal}. An example is in proving ~\eqref{eq:almostOrtho} above. We omit the details here.
\end{itemize}
Now inverting the matrix $$[Id+\Pi]^{-1}=Id-\Pi+\mathcal{O}(\Pi^2)$$ in ~\eqref{eq:lambdagamma} we obtain the desired estimates on $\dot\lambda$ and $\dot\gamma-\Upsilon_{1,1}$, which are ~\eqref{eq:RoughEstLambda}-~\eqref{eq:sigmaG1} except these on $\mathcal{K}$.

The estimate on $\mathcal{K}$ are implied by ~\eqref{eq:apq2} and ~\eqref{eq:RoughEstLambda}-~\eqref{eq:sigmaG1}.

By similar arguments we prove ~\eqref{eq:KeyTerm1}.

The proof is complete.
\end{proof}
\subsection{Proof of Equation ~\eqref{eq:ControlZ}}
\begin{proof} As usual we only prove the case $\sigma=1$, the cases $\sigma>1$ is easier by using the stronger condition $|z_{0}|\leq \delta_{\infty}^{C(\sigma)}$ for some sufficiently large $C(\sigma).$

By ~\eqref{eq:ZNequation} and the estimate for $\mathcal{K}$ in ~\eqref{eq:apq2}
\begin{equation}\label{eq:estiZ}
\frac{d}{dt}|z|^{2}=-2Re z^* \Gamma(z,\bar{z})z+2Re z^* \cdot Remainder
\end{equation} with $Re z^* \cdot Remainder
$ satisfying the estimate $$
\begin{array}{lll}
|Re z^* \cdot Remainder
|&\lesssim& \delta_{\infty}|z| Remainder\\
&\lesssim& \delta_{\infty}|z|^{5}+\delta_{\infty}|z|^2[|z_{0}|^2(1+t)^{-\frac{3}{2}}+
\delta_{\infty}|z_{0}|^2|z(t)|^2]\mathcal{M}_{3}\\
& &\\
& &+\delta_{\infty}|z|[|z_{0}|^4(1+t)^{-3}+\delta_{\infty}^2 |z_{0}|^4|z(t)|^4]\mathcal{M}_{3}^2.
\end{array}
$$

Observe that $|Re z^* \cdot Remainder
|$ is NOT a higher order correction of $Re z^* \Gamma(z,\bar{z})z=\mathcal{O}(\delta_{\infty}^2|z|^4)$ in a neighborhood of $t=0$. This forces us to divide the region $t\in [0,\infty)$ into two parts $t\leq \delta_{\infty}^{-2}|z_{0}|^{-2}$ and $t> \delta_{\infty}^{-2}|z_{0}|^{-2}.$

In the finite time interval we define $$|\tilde{z}|^2(t):=|z|^2(t) e^{-\int_{0}^{t}\frac{2 Re z^* \cdot Remainder
}{|z|^2(s)}ds}\approx |z|^2(t).$$ The assumptions $|z|\geq \frac{1}{10}|z_{-}(t)|\gtrsim (|z_{0}|^{-2}+\delta_{\infty}^2 t)^{-\frac{1}{2}}$ and $\mathcal{M}_{3}\leq \delta_{\infty}^{-\frac{1}{2}}$ imply that for any time $t\leq \delta_{\infty}^{-2}|z_{0}|^{-2}$,
\begin{equation}\label{eq:remark}
0\leq \int_{0}^{t}\frac{2 |Re z^* \cdot Remainder
|}{|z|^{2}}ds=\mathcal{O}(\frac{|z_0|}{\delta_{\infty}}) \ll 1.
 \end{equation} Moreover by ~\eqref{eq:estiZ} and the estimates of $Re z^* \Gamma(z,\bar{z})z$ in ~\eqref{eq:LowerUpperBound}
\begin{equation}\label{eq:UpperLower}
\frac{3}{4} C_{-}\delta_{\infty}^2 \leq \frac{d}{dt}|\tilde{z}|^{-2}\leq \frac{5}{4} C_{+}\delta_{\infty}^2.
\end{equation} Consequently when $t\leq \delta_{\infty}^{-2}|z_{0}|^{-2}$ we have the desired estimate
\begin{equation}\label{eq:claimZ}
2|z_{+}(t)|\geq |\tilde{z}(t)|\geq \frac{1}{2}|z_{-}(t)|,\ \text{hence}\ 3|z_{+}(t)|\geq |z(t)|\geq \frac{1}{3}|z_{-}(t)|
\end{equation} where, recall the definitions of $z_{\pm}(t)=(|z_{0}|^{-2}+C_{\pm}\delta_{\infty}^2 t)^{-\frac{1}{2}}$ in Equation ~\eqref{eq:difZpm}.

When $t\geq \delta_{\infty}^{-2}|z_{0}|^{-2}$ we consider ~\eqref{eq:estiZ} in the regime $[\delta_{\infty}^{-2}|z_{0}|^{-2},\infty)$ with the initial condition satisfying the estimate in ~\eqref{eq:claimZ}. This is easier by the fact the second term in ~\eqref{eq:estiZ} is a true correction to the first: Use $(1+t)^{-1}\leq 2(|z_{0}|^{-2}\delta_{\infty}^{-2}+t)^{-1}=2\delta_{\infty}^2(|z_{0}|^{-2}+\delta_{\infty}^2 t)^{-1}$ to find $\delta_{\infty}^2|z|^4\gg |Re z^* \cdot Remainder
|. $ Thus there exists an $0\leq \epsilon\ll 1$ such that $$-(2+\epsilon)Re z^* \Gamma(z,\bar{z})z\leq \frac{d}{dt}|z|^{2}\leq -(2-\epsilon)Re z^* \Gamma(z,\bar{z})z$$ which together with the condition in ~\eqref{eq:claimZ} at $t=\delta_{\infty}^{-2}|z_{0}|^{-2}$ enables us to obtain ~\eqref{eq:ControlZ}.

The proof is complete.
\end{proof}
\begin{remark}\label{Remark} In the proof above, specifically ~\eqref{eq:remark}, we used $|z_{0}|\ll \delta_{\infty}=\mathcal{O}(\|\phi^{\lambda_0}\|_{2})$ to show $\delta_{\infty}|z|^5\ll 2Re z^* \Gamma(z,\bar{z})z$. Actually this condition can be weaken to be $|z_0|\leq \|\phi^{\lambda_0}\|_2$ by refining normal form transformation: Namely examine closely the equation of $z$ to find that
$$\partial_{t}z+iE(\lambda)z=-\Gamma(z,\bar{z})z+\Lambda(z,\bar{z})z+\sum_{m+n=4}Z_{m,n}+Remainder2$$ where $Remainder2$ satisfies the estimate $$|Remainder2|\lesssim \delta_{\infty}|z|^5+\delta_{\infty}|z| \|\langle x\rangle^{-4 }R_{\geq 4}\|_{2}+\delta_{\infty}\|\langle x\rangle^{-4 }R_{\geq 4}\|_{2}^2.$$  The fourth order term $Z_{m,n}$ can be removed by choosing a new parameter $\tilde{z}$ by $$\tilde{z}=z+\displaystyle\sum_{m+n=4}\frac{1}{iE(\lambda)(m-n-1)}Z_{m,n}\approx z.$$ By studying the equation for $\tilde{z}$ we obtain the desired estimate.
\end{remark}

\subsection{The Estimate of $\|\vec{R}\|_{2}^{2}$: Proof of ~\eqref{eq:EstM4}}
We only prove the case $\sigma=1$, the cases $\sigma>1$ is easier by using the stronger condition $|z_{0}|\leq \delta_{\infty}^{C(\sigma)}$ for some sufficiently large $C(\sigma).$

By taking time derivative on $\|\vec{R}\|_{2}^{2}$ and using the equation for $\vec{R}$ in ~\eqref{RAfProj} we find
$$
\frac{d}{dt}\langle
\vec{R},\vec{R}\rangle
=K_{1}+K_2
$$ with $K_{n}$, $n=1,2,$ defined as $$K_{1}:=\langle
(L(\lambda)+\dot\gamma
J)\vec{R},\vec{R}\rangle +\langle \vec{R},
(L(\lambda)+\dot\gamma J)\vec{R}\rangle+\dot\lambda\langle
P_{c\lambda}\vec{R},\vec{R}\rangle+\dot\lambda\langle\vec{R},P_{c\lambda}\vec{R}\rangle;$$
$$K_{2}:=-\langle
P_{c}^{\lambda}JN(\vec{R},z),\vec{R}\rangle-\langle
\vec{R},P_{c}^{\lambda}JN(\vec{R},z)\rangle+\langle P_{c}^{\lambda}\mathcal{G},
\vec{R}\rangle+\langle
\vec{R},P_{c}^{\lambda}\mathcal{G}\rangle.$$

By the observation $J^{*}=-J$ and the fact
that $JL(\lambda)$ is self-adjoint we cancel all the nonlocalized terms in $K_{1}$ and obtain: $$|K_{1}|\lesssim \|\langle x\rangle^{-4}\vec{R}\|_{2}^2.$$
Recall the definition of
$P_{c}^{\lambda}\mathcal{G}$ in (~\ref{RAfProj}). By various estimates in Proposition ~\ref{Prop:Parameters} and the estimates in previous subsections we obtain
$$|K_{2}|\lesssim \delta_{\infty}|z|^{2}\|\langle x\rangle^{-4}\vec{R}\|_{2}+\|\langle x\rangle^{-4 }\vec{R}\|_{2}^{2}+\|\vec{R}\|_{4}^{4}.$$

Consequently
$$
\begin{array}{lll}
|\frac{d}{dt}\langle \vec{R},\vec{R}\rangle|&\lesssim & \delta_{\infty}|z|^{2}\|\langle x\rangle^{-4 }\vec{R}\|_{2}+\|\langle x\rangle^{-4 }\vec{R}\|_{2}^{2}+\|\vec{R}\|_{4}^{4}\\
&\lesssim &\delta_{\infty}^{2}|z|^{4}+\|\langle x\rangle^{-4 }\vec{R}\|_{2}^{2}+\|\vec{R}\|_{4}^{4}.
\end{array}
$$
Integrate the equation from $0$ to $T$ and use the fact $\|\vec{R}(0)\|_{2}^2\lesssim |z_0|^2$ to obtain
\begin{equation}\label{eq:L2Control}
 \|\vec{R}(T)\|_{2}^{2}\lesssim |z_0|^2+\int_{0}^{T}\ \delta_{\infty}^{2}|z|^{4}(s)+\|\langle x\rangle^{-4 }\vec{R}(s)\|_{2}^{2}+\|\vec{R}(s)\|_{4}^{4}\ ds.
\end{equation}

Now we estimate the different terms inside integral.

We start with the first term. By the assumption $10 |z_+|\geq |z|\geq \frac{1}{10}|z_{-}|$ we obtain $$\int_{0}^{T}\delta_{\infty}^{2}|z(s)|^{4}ds\lesssim \int_{0}^{T}\delta_{\infty}^{2}(|z_{0}|^{-2}+\delta_{\infty}^{2}s)^{-2} ds\leq |z_{0}|^{2}.$$

For the second term we use the definition $\vec{R}=\displaystyle\sum_{m+n=2,3}R_{m,n}+ R_{\geq 4}$ to obtain $$\|\langle x\rangle^{-4 }\vec{R}\|_{2}\leq \sum_{m+n=2,3}\|\langle x\rangle^{-4 }R_{m,n}\|_{2}+\|\langle x\rangle^{-4 }R_{\geq 4}\|_{2}.$$

The two terms on the right hand sides admit the following estimate.
\begin{enumerate}
\item[(1)]
By $\|\langle x\rangle^{-4 }R_{m,n}\|_{2}\lesssim \delta_{\infty}|z|^{2}$ proved in Proposition ~\ref{Prop:Parameters} and the assumption $10|z_{+}|\geq |z|\geq \frac{1}{10}|z_{-}|$  $$\int_{0}^{T}\|\langle x\rangle^{-4 }R_{m,n}(s)\|_{2}^{2}\ ds\lesssim \delta_{\infty}^{2}\int_{0}^{\infty}(|z_{0}|^{-2}+\delta_{\infty}^{2}s)^{-2}\ ds=|z_{0}|^{2};$$
\item[(2)]By the definition of $\mathcal{M}_3$ $$\int_{0}^{T} \|\langle x\rangle^{-4 }R_{\geq 4}(s)\|_{2}^{2}\ ds \leq \mathcal{M}_{3}^2(T) |z_{0}|^3.$$
\end{enumerate}

For the third term $\int_{0}^T\|\vec{R}(s)\|_{4}^{4}\ ds$
$$\int_{0}^{T}\|\vec{R}(s)\|_{4}^{4}\ ds\leq \int_{0}^{T} \|\vec{R}(s)\|_{\infty}^{2} \|\vec{R}(s)\|_{2}^2\ ds\leq \mathcal{M}_{1}^{2}(T)\mathcal{M}_{4}^2(T)|z_{0}|^3.$$

Collecting the estimates above we complete the proof.

\subsection{The Estimate of $\|\langle x\rangle^{-4 }R_{\geq 4}\|_{2}$: Proof of ~\eqref{eq:EstM3}}
We only prove the case $\sigma=1$, the cases $\sigma>1$ is different, but easier by using the stronger condition $|z_{0}|\leq \delta_{\infty}^{C(\sigma)}$ for some sufficiently large $C(\sigma).$

Use the definition of $R_{\geq 4}$ in ~\eqref{dif:R4} and the equation ~\eqref{RAfProj} to derive an equation for $R_{\geq 4}$
\begin{equation}\label{eq:Rge4}
\partial_{t}R_{\geq 4}=L(\lambda)R_{\geq 4}+L_{(\dot\lambda,\dot\gamma)}R_{\geq 4}-P_{c}^{\lambda}JN_{\geq 4}+\mathcal{G}_{1}
\end{equation} with the terms $JN_{\geq 4}$ and $\mathcal{G}_1$ defined as: $$JN_{\geq 4}:=JN(\vec{R},p,z)-\sum_{m+n=2,3}JN_{m,n},$$
$$\mathcal{G}_{1}:=\mathcal{G}-\Upsilon_{1,1}P_{c}^{\lambda(t)}\left(
\begin{array}{lll}
\beta\cdot \eta\\
-\alpha\cdot\xi
\end{array}\right)+P_{c}S_{>4},$$ and
$$P_{c}S_{>4}:=L_{(\dot\lambda,\dot\gamma)}\sum_{m+n=2,3}R_{m,n}+\sum_{m+n=2,3}[L(\lambda)R_{m,n}-\frac{d}{dt}R_{m,n}-P_{c}^{\lambda}JN_{m,n}].$$
The functions in $\mathcal{G}_1$ take certain forms
\begin{lemma} There exist some functions $\phi(m,n,k)$ such that
\begin{equation}\label{eq:G1}
\mathcal{G}_{1}=\displaystyle\sum_{
\begin{subarray}{lll}
m+n=2,3,\\
k=0,1,2
\end{subarray}
}[L(\lambda)+iE(\lambda)(m-n)-0]^{-k}\phi(m,n,k)
\end{equation} where $\phi(m,n,k)$ are smooth functions admitting the estimate
\begin{equation}\label{eq:b11}
\|\langle x\rangle^{4}\phi(m,n,k)\|_{2}\lesssim |\dot\lambda ||z|+|z||\dot\gamma-\Upsilon_{1,1}|+\delta_{\infty}|z|^{4}.
\end{equation}
\end{lemma}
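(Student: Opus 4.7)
The plan is to take the definition
\[
\mathcal{G}_{1}=\mathcal{G}-\Upsilon_{1,1}P_{c}^{\lambda}\binom{\beta\cdot\eta}{-\alpha\cdot\xi}+P_{c}S_{>4},
\]
substitute the explicit expressions for $\mathcal{G}$ from~\eqref{Gdef} and for $P_{c}S_{>4}$, and show term-by-term that every surviving contribution can be cast in the form $[L(\lambda)+iE(\lambda)(m-n)]^{-k}\phi(m,n,k)$ for some $m+n\in\{2,3\}$ and $k\in\{0,1,2\}$, with $\phi(m,n,k)$ admitting the claimed bound~\eqref{eq:b11}. The $k=0$ contributions will come from the pieces of $\mathcal{G}$ that carry an explicit $\dot\lambda$ or $\dot\gamma-\Upsilon_{1,1}$ factor and from $L_{(\dot\lambda,\dot\gamma)}\sum R_{m,n}$; the $k=1$ and $k=2$ contributions will come from differentiating $R_{m,n}=A_{m,n}^{-1}P_{c}[JN_{m,n}+X_{m,n}]$ with respect to $t$, where $A_{m,n}:=L(\lambda)+iE(\lambda)(m-n)$.

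The heart of the argument is the decomposition of $\sum_{m+n=2,3}\bigl[L(\lambda)R_{m,n}-\tfrac{d}{dt}R_{m,n}-P_{c}^{\lambda}JN_{m,n}\bigr]$. The identity $L(\lambda)R_{m,n}=P_{c}[JN_{m,n}+X_{m,n}]-iE(\lambda)(m-n)R_{m,n}$ reduces this to $P_{c}X_{m,n}-iE(\lambda)(m-n)R_{m,n}-\tfrac{d}{dt}R_{m,n}$; the sum of the $X_{m,n}$'s combines with the $-\Upsilon_{1,1}P_{c}^{\lambda}\binom{\beta\eta}{-\alpha\xi}$ subtraction in the definition of $\mathcal{G}_{1}$ (and with the analogous linear-in-$(\alpha,\beta)$ piece of $\Upsilon_{1,1}P_{c}\binom{(\beta+q)\eta}{-(\alpha+p)\xi}$ inside $\mathcal{G}$) to eliminate the purely cubic forcing. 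For $\tfrac{d}{dt}R_{m,n}$ I apply the product rule $\tfrac{d}{dt}A_{m,n}^{-1}=-A_{m,n}^{-1}\dot A_{m,n}A_{m,n}^{-1}$, producing the only $k=2$ contribution $-\dot\lambda\,A_{m,n}^{-1}[\partial_{\lambda}L+iE'(\lambda)(m-n)]R_{m,n}$; and on the factor $P_{c}[JN_{m,n}+X_{m,n}]$ I apply the chain rule $\partial_{t}=\dot z\cdot\partial_{z}+\dot{\bar z}\cdot\partial_{\bar z}+\dot\lambda\partial_{\lambda}$. Using the equation \eqref{eq:ZNequation} for $\dot z$ together with the homogeneity identity $\partial_{z}JN_{m,n}\cdot z=m\,JN_{m,n}$, the leading piece $\partial_{z}JN_{m,n}\cdot(-iE(\lambda)z)+\partial_{\bar z}JN_{m,n}\cdot(iE(\lambda)\bar z)=-iE(\lambda)(m-n)JN_{m,n}$ is produced, and once multiplied by $A_{m,n}^{-1}P_{c}$ it exactly cancels the $-iE(\lambda)(m-n)R_{m,n}$ coming from $L(\lambda)R_{m,n}-P_{c}JN_{m,n}$. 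What remains is a $k=1$ term $A_{m,n}^{-1}P_{c}\bigl[\dot\lambda\,\partial_{\lambda}JN_{m,n}+\partial_{z}JN_{m,n}\cdot\mathcal{K}+\partial_{\bar z}JN_{m,n}\cdot\bar{\mathcal{K}}\bigr]$ together with a similar contribution generated by $\dot P_{c}$.

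The estimate~\eqref{eq:b11} for each $\phi(m,n,k)$ then follows from the size estimates in Proposition~\ref{Prop:Parameters}: $\|\langle x\rangle^{4}R_{m,n}\|_{2}\lesssim\delta_{\infty}^{2\sigma-1}|z|^{m+n}$, $\|\langle x\rangle^{4}\partial_{\lambda}JN_{m,n}\|_{2}$ and $\|\langle x\rangle^{4}\partial_{z}JN_{m,n}\|_{2}\lesssim\delta_{\infty}^{2\sigma-2}|z|^{m+n-1}$, together with the bound $|\Upsilon_{1,1}|\lesssim\delta_{\infty}^{2\sigma-2}|z|^{2}$ and the bound on $\mathcal{K}$ from Lemma~\ref{LM:apremainder}. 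Substituting and collecting, each contribution is dominated by $|\dot\lambda|\,|z|$, $|z|\,|\dot\gamma-\Upsilon_{1,1}|$, or $\delta_{\infty}|z|^{4}$ (the latter coming from the product $\Upsilon_{1,1}\cdot p_{k}$ or $\Upsilon_{1,1}\cdot q_{k}$, from $\partial_{z}JN_{m,n}\cdot Z_{2,1}$, and from $\dot\lambda\partial_{\lambda}JN_{m,n}$ once $\dot\lambda$ itself is separated off).

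The main obstacle is the bookkeeping: one must show that every cubic-in-$z$ piece of the forcing in the equation for $R_{\geq 4}$ is annihilated after the subtraction $-\Upsilon_{1,1}P_{c}^{\lambda}\binom{\beta\eta}{-\alpha\xi}$ and the resonant cancellation of $-iE(\lambda)(m-n)R_{m,n}$ with the chain-rule derivative of $JN_{m,n}$, so that only terms of the form stated in~\eqref{eq:G1} remain. This is exactly the reason the $X_{m,n}$'s were introduced in the definition~\eqref{eq:difRgeq3} of the third-order $R_{m,n}$, and making the cancellation explicit verifies that no lower-order residue escapes the resolvent structure.
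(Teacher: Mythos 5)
Your reconstruction of the structural form \eqref{eq:G1} follows the same route as the paper, which for the structure itself simply cites \cite{GaWe}, Theorem~8.1, and for the bound \eqref{eq:b11} says the lowest-order $\delta_{\infty}|z|^4$ piece is produced by $\Upsilon_{1,1}\left(\begin{array}{l}q\cdot\eta\\ -p\cdot\xi\end{array}\right)$, $\Upsilon_{1,1}\sum_{m+n=2}JR_{m,n}$, and part of $\sum_{m+n=2,3}[L(\lambda)R_{m,n}-\tfrac{d}{dt}R_{m,n}-P_{c}^{\lambda}JN_{m,n}]$, and then declares the rest ``tedious but easy.'' Your identification of the $k=0,1,2$ contributions, the homogeneity/chain-rule cancellation of $-iE(\lambda)(m-n)R_{m,n}$ against the leading part of $\tfrac{d}{dt}R_{m,n}$, the splitting $\dot\gamma=(\dot\gamma-\Upsilon_{1,1})+\Upsilon_{1,1}$ in the $L_{(\dot\lambda,\dot\gamma)}$ piece, and the use of Proposition~\ref{Prop:Parameters}, \eqref{eq:upsilon11}, and Lemma~\ref{LM:apremainder} for $\mathcal{K}$ are all consistent with what the paper leaves implicit.

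However, the one step you call ``the heart of the argument'' does not close arithmetically as you state it. You claim that the three terms $P_{c}\sum_{m+n=3}X_{m,n}$, the explicit subtraction $-\Upsilon_{1,1}P_{c}^{\lambda}\left(\begin{array}{l}\beta\cdot\eta\\ -\alpha\cdot\xi\end{array}\right)$ in the definition of $\mathcal{G}_{1}$, and the linear-in-$(\alpha,\beta)$ part $+\Upsilon_{1,1}P_{c}^{\lambda}\left(\begin{array}{l}\beta\cdot\eta\\ -\alpha\cdot\xi\end{array}\right)$ of $\mathcal{G}$ ``combine to eliminate the purely cubic forcing.'' But with the signs as defined in \eqref{eq:difRgeq3} and \eqref{Gdef}, these are respectively $-\Upsilon_{1,1}P_{c}\left(\begin{array}{l}\beta\cdot\eta\\ -\alpha\cdot\xi\end{array}\right)$, $-\Upsilon_{1,1}P_{c}\left(\begin{array}{l}\beta\cdot\eta\\ -\alpha\cdot\xi\end{array}\right)$, and $+\Upsilon_{1,1}P_{c}\left(\begin{array}{l}\beta\cdot\eta\\ -\alpha\cdot\xi\end{array}\right)$, which sum to $-\Upsilon_{1,1}P_{c}\left(\begin{array}{l}\beta\cdot\eta\\ -\alpha\cdot\xi\end{array}\right)$, a residue of size $\mathcal{O}(\delta_{\infty}^{2\sigma-2}|z|^{3})=\mathcal{O}(|z|^{3})$ for $\sigma=1$, far larger than the right-hand side of \eqref{eq:b11}. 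In fact, if you re-derive \eqref{eq:Rge4} directly from \eqref{RAfProj} and \eqref{dif:R4} you obtain $\mathcal{G}_{1}=\mathcal{G}+P_{c}S_{>4}$ with no extra subtraction, in which case the cubic cancellation is a clean two-way cancellation of $P_{c}\sum X_{m,n}$ against the $(\alpha,\beta)$ piece of $\mathcal{G}$, and your bound then goes through; you should either flag that the extra $-\Upsilon_{1,1}P_{c}\left(\begin{array}{l}\beta\cdot\eta\\ -\alpha\cdot\xi\end{array}\right)$ in the displayed definition of $\mathcal{G}_{1}$ is inconsistent with \eqref{eq:Rge4}, or explain where a compensating term absorbs the residue. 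As written, asserting a three-way cancellation that the arithmetic does not support is a genuine gap.
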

\begin{proof}
~\eqref{eq:G1} is taken from ~\cite{GaWe}, Theorem 8.1, p. 285.

~\eqref{eq:b11} is an improvement from ~\cite{GaWe} by the fact one has to find that the lowest order term in $\phi(m,n,k)$ is of the order $\delta_{\infty}|z|^4$. By Proposition ~\ref{Prop:Parameters} and direct computation we find it is generated by $\Upsilon_{1,1}\left(
\begin{array}{lll}
q\cdot\eta\\
-p\cdot\xi
\end{array}
\right),$ $\Upsilon_{1,1}\displaystyle\sum_{m+n=2}JR_{m,n}$ and part of $\displaystyle\sum_{m+n=2}[L(\lambda)R_{m,n}-\frac{d}{dt}R_{m,n}-P_{c}^{\lambda}JN_{m,n}]$. The procedure is tedious, but easy. We omit the detail here.

The proof is complete.
\end{proof}

Rewrite the equation for $R_{\geq 4}$ in ~\eqref{eq:Rge4} as
\begin{equation}\label{eq:R4}
\partial_{t}R_{\geq 4}=L(\lambda)R_{\geq 4}+L_{(\dot\lambda,\dot\gamma)}R_{\geq 4}-P_{c}^{\lambda}(Loc+NonLoc)+P_{c}^{\lambda}\mathcal{G}_{1}
\end{equation} where, recall the definitions of $Loc$ and $NonLoc$ in ~\eqref{eq:HiRemainder}.

Following the steps in ~\cite{GaWe}, p. 302, we now derive an integral equation for $P_c^{\lambda_1}R_{\geq 4}$. Rewrite $L(\lambda(t))$ as
$L(\lambda(t))=L(\lambda_{1})+L(\lambda(t))-L(\lambda_{1})$ with $\lambda_1:=\lambda(T)$ for some fixed time $T$, and rewrite (~\ref{eq:R4}) one more time to obtain
\begin{equation}\label{estimater21r22}
\begin{array}{lll}
\frac{d}{dt}P_{c}^{\lambda_{1}}R_{\geq 4}&=&L(\lambda_{1})P_{c}^{\lambda_{1}}
R_{\geq 4}+[\dot{\gamma}+\lambda-\lambda_{1}]i(P_{+}-P_{-})R_{\geq 4}\\
&
&+P_{c}^{\lambda_{1}}O_{1}R_{\geq 4}+P_{c}^{\lambda_{1}}\mathcal{G}_1-P_{c}^{\lambda_{1}}[Loc+NonLoc].
\end{array}
\end{equation}
Here for the terms on the right hand side we have
\begin{itemize}
\item[(1)]
$O_{1}$ is the operator defined by
$$
O_{1}:=\dot{\lambda}P_{c\lambda}+L(\lambda)-L(\lambda_{1})+
\dot{\gamma}P_{c}^{\lambda}J-[\dot{\gamma}+\lambda-\lambda_{1}]i(P_{+}-P_{-})
$$ and the function $O_1 R_{\geq 4}$ satisfies the estimate: when $t\leq T$ then apply Proposition ~\ref{Prop:Majorants} to obtain
\begin{equation}\label{EstO1}
\begin{array}{lll}
 \|\langle x\rangle^{4} O_1 R_{\geq 4}\|_2 &\lesssim& [|\dot\lambda|+|\dot\gamma|+|\lambda-\lambda_1|]\|\langle x\rangle^{-4} R_{\geq 4}\|_2\\
&\lesssim &[|z_{0}|^{2}(1+s)^{-\frac{3}{2}}+\delta_{\infty}|z_{0}|^{2}(|z_{0}|^{-2}+\delta_{\infty}^2 s)^{-1} [1+\delta_{\infty}\mathcal{M}_{3}+\delta_{\infty}^3\mathcal{M}_{3}^2]
\end{array}
\end{equation}
\item[(2)]
Recall that $L(\lambda)$ has two branches of essential
spectrum $[i\lambda,i\infty)$ and $(-i\infty,-i\lambda]$, we use
$P_{+}$ and $P_{-}$ to denote the projection operators onto these
two branches of the essential spectrum of $L(\lambda(T))$. 
\end{itemize}
Then we
have
\begin{lemma}\label{LM:ApproOp} For any function $h$ and any large constant $\nu>0$ we have
\begin{equation}\label{eq:PosiNegBran}
\left\|\ \langle x\rangle^{\nu}\left(\ P_{c}^{\lambda_{1}}J-i(P_{+}-P_{-})\ \right)h\ \right\|_{2}\leq
c\ \left\|\ \langle x\rangle^{-4 }h\ \right\|_{2}.
\end{equation}
\end{lemma}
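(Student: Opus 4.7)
The plan is to reduce to the free linearized operator, where the identity $P_c J=i(P_+-P_-)$ holds exactly, and then to express the correction from the soliton-dependent part as an operator with a spatially localized kernel. Write $L(\lambda_1)=L_0+JW$, where $L_0=J(-\Delta+\lambda_1)\mathrm{Id}$ is the free operator and $W=\mathrm{diag}(V-(2\sigma+1)(\phi^{\lambda_1})^{2\sigma},\,V-(\phi^{\lambda_1})^{2\sigma})$ is a multiplication operator whose entries decay faster than any polynomial in $x$, by (V1) together with the exponential decay of $\phi^{\lambda_1}$ from Proposition~\ref{Prop:Parameters}. The continuous-spectrum generalized eigenfunctions of $L_0$ are $\vec e_{\pm}(k,x)=(1,\pm i)^T e^{ik\cdot x}$ with $L_0\vec e_\pm=\pm i(|k|^2+\lambda_1)\vec e_\pm$, and a one-line calculation gives $J\vec e_\pm=\pm i\vec e_\pm$. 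Hence $P_c^0 J-i(P_+^0-P_-^0)=0$ as an exact operator identity for the free operator, where $P_c^0,P_\pm^0$ are the corresponding spectral projections.

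Next, I would represent the spectral projections for the full operator via Stone's formula, integrating $(L(\lambda_1)-i\omega\mp 0)^{-1}$ against $d\omega$ over the two branches $\omega\in[\lambda_1,\infty)$ of the essential spectrum, and do the analogous integration for $L_0$. A single application of the second resolvent identity $(L-i\omega\mp 0)^{-1}=(L_0-i\omega\mp 0)^{-1}-(L_0-i\omega\mp 0)^{-1}JW(L-i\omega\mp 0)^{-1}$ then expresses $P_\pm-P_\pm^0$ and $P_c^{\lambda_1}-P_c^0$ as spectral integrals whose integrand carries at least one factor of $W$. Using the free-case identity from the previous step to cancel the $W$-free contribution, the operator $P_c^{\lambda_1}J-i(P_+-P_-)$ becomes a finite sum of explicit spectral integrals of pairs of resolvents sandwiching $JW$.

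The weighted bound then follows from three ingredients: (i) the limiting absorption principle in weighted spaces, which under $({\bf Thresh}_{\lambda_1})$ yields $\sup_{\omega\ge \lambda_1}\|\langle x\rangle^{-\mu}(L(\lambda_1)-i\omega\pm 0)^{-1}\langle x\rangle^{-\mu}\|_{L^2\to L^2}<\infty$ for $\mu>1/2$, and the corresponding estimate for $L_0$; (ii) the rapid spatial decay of $W$, which upgrades any polynomial weight on one side of the kernel to an arbitrary weight $\langle x\rangle^{-\nu-4}$ on the other via a factor $W\cdot\langle x\rangle^{\mu+\nu+4}=\cO(\langle x\rangle^{-N})$; and (iii) high-frequency decay of the resolvents in weighted norms, which secures $\omega$-integrability at infinity. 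Assembling these estimates shows that the Schwartz kernel of $P_c^{\lambda_1}J-i(P_+-P_-)$ is dominated by $\langle x\rangle^{-\nu}\langle y\rangle^{4}$ in an integrated sense, which gives exactly the claimed inequality $\|\langle x\rangle^{\nu}(P_c^{\lambda_1}J-i(P_+-P_-))h\|_2\lesssim \|\langle x\rangle^{-4}h\|_2$.

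The main technical obstacle is the uniform treatment of the spectral integrals near the threshold $\omega=\lambda_1$, where naive $L^2$-resolvent bounds fail and one must combine the weighted LAP with the smoothing effect of $W$ to prevent the endpoint singularity; this is precisely where the no-threshold-resonance hypothesis $({\bf Thresh}_{\lambda_1})$ enters. The required weighted resolvent estimates for $L(\lambda_1)$ are by now standard in the linearized NLS literature and are used extensively in \cite{GaWe}, so I would invoke them directly rather than reprove them here.
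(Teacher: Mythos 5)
The opening steps of the proposal are correct: for the free operator $L_0=J(-\Delta+\lambda_1)$ the identity $P_c^0J=i(P_+^0-P_-^0)$ is exact (since $J\vec e_\pm=\pm i\,\vec e_\pm$ and $P_+^0+P_-^0=I$), and the full–free difference is correctly reduced, via the second resolvent identity, to expressions carrying at least one factor of the rapidly decaying multiplication operator $W$. What goes wrong is step (ii)--(iii), the claim that the limiting absorption principle together with the decay of $W$ yields a Schwartz kernel bounded (in an integrated sense) by $\langle x\rangle^{-\nu}$ for \emph{arbitrary} $\nu$. That does not follow from the ingredients listed. The LAP gives $\|\langle x\rangle^{-\mu}(L-i\omega\mp0)^{-1}\langle x\rangle^{-\mu}\|_{L^2\to L^2}\lesssim 1$ only for weights $\mu>1/2$ with a \emph{fixed} loss on each side; it does not allow you to place an arbitrarily strong negative weight on the output. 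And the localization of $W$ only localizes the \emph{intermediate} variable in the composition $(L_0-z)^{-1}JW(L-z)^{-1}$; it does nothing for the outermost resolvent factor that actually meets $\langle x\rangle^\nu$. Concretely, the boundary value of the free resolvent on the branch being integrated has a kernel of the form $e^{i\sqrt{\omega-\lambda_1}\,|x-y'|}/|x-y'|$, which for $y'$ confined to the support of $W$ decays only like $1/|x|$; thus $\langle x\rangle^\nu(L_0-i\omega\mp0)^{-1}JW(\cdots)$ is \emph{not} bounded into $L^2$ once $\nu>1$, at each fixed $\omega$.

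The true mechanism for the arbitrary polynomial gain in $x$ (and in $y$) is not a pointwise kernel bound at fixed $\omega$ at all. It comes from two structural facts that the proposal omits: first, writing $J\mp i=\pm 2i\,\Pi_\pm$ (the spectral projections of $J$), the free resolvent on the ``wrong'' component $\Pi_\mp$ along the branch $\pm i[\lambda_1,\infty)$ is \emph{regular} (its scalar reduction is $(-\Delta+\lambda_1+\omega)^{-1}$, far from its spectrum), so that the boundary term in the resolvent expansion carries no jump and has an exponentially decaying kernel; and second, the remaining singular piece produces the required decay in $x$ only after integration in the spectral parameter, where the smoothness and high-frequency decay of the (weighted) boundary values in $\omega$ translate, Fourier-style, into rapid spatial decay of the $\omega$-integrated kernel. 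Without invoking this oscillatory cancellation in the spectral integral (or, equivalently, the algebraic identity $P_c^{\lambda_1}J-i(P_+-P_-)=-2i\bigl(P_+P_-^0-P_-P_+^0\bigr)$ together with the disjointness of these spectral supports), the chain of estimates in the proposal does not close. A minor additional point: the exponential decay of $\phi^{\lambda_1}$ used to control $W$ is not contained in Proposition~\ref{Prop:Parameters} (which only gives $\langle x\rangle^{-4}H^2$ control); it is a separate Agmon-type fact.
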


The following estimates are taken from ~\cite{GaWe}, Theorem 5.7, p. 280.
\begin{lemma} There exists a constant $c$ such that if the parameter $\lambda_1$ satisfies the estimate $|\lambda_1+e_0|\ll 1$, then for any function $h$ and $t\geq 0$ we have
\begin{equation}\label{eq:SingularEst}
\|\langle x\rangle^{-4}e^{tL(\lambda_1)}(L(\lambda_1)\pm
ikE(\lambda_1)-0)^{-n}P_{\pm}h\|_{2}
\ \ \ \ \leq c(1+t)^{-\frac{3}{2}}    \|\langle x\rangle^{4}h\|_{2}
\end{equation} with $n=0,1,2,\ k=2,3$;
\begin{equation}
\|\langle x\rangle^{-4}e^{tL(\lambda_1)} P_{\pm} h\|_{2}\lesssim
(1+|t|)^{-\frac{3}{2}}(\|h\|_{1}+\|h\|_{2}).
\label{eq:Regular}
\end{equation}
\end{lemma}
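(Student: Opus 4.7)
My plan is to derive both estimates from a limiting absorption principle for $L(\lambda_1)$ combined with an oscillatory-integral / stationary-phase argument. Since $L(\lambda_1)=JH(\lambda_1)$ has essential spectrum $\sigma_{\rm ess}(L(\lambda_1))=(-i\infty,-i\lambda_1]\cup[i\lambda_1,i\infty)$, the hypothesis $({\rm Thresh}_{\lambda_1})$ guarantees that the boundary resolvents
\[
\mathcal{R}^{\pm}(\tau)\;:=\;\lim_{\epsilon\downarrow 0}\bigl(L(\lambda_1)-i\tau\mp\epsilon\bigr)^{-1}
\]
exist on each branch and are $C^{2}$-smooth in $\tau$ as bounded operators $\langle x\rangle^{4}L^{2}\to\langle x\rangle^{-4}L^{2}$, with a Puiseux-type expansion at the threshold $\tau=\pm\lambda_1$. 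This is the Kato--Jensen analogue for the non-self-adjoint matrix $L(\lambda_1)$ of the scalar theory for $-\Delta+V$, and the absence of threshold resonance is precisely what enters here.

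First I would represent the semigroup, via a Stone-type formula and contour deformation around the essential spectrum, as
\[
e^{tL(\lambda_1)}P_{\pm}h \;=\; \frac{1}{2\pi i}\int_{\pm\lambda_1}^{\pm\infty}e^{it\tau}\bigl[\mathcal{R}^{+}(\tau)-\mathcal{R}^{-}(\tau)\bigr]h\,d\tau,
\]
and then change variables $\tau=\pm\lambda_1+s^{2}$ to resolve the square-root singularity at the threshold. The integral becomes $e^{\pm it\lambda_1}\int_{0}^{\infty}e^{its^{2}}\,s\,K_{\pm}(s)h\,ds$ for an operator-valued amplitude $K_{\pm}(s)$ which, by the $C^{2}$ regularity above, is uniformly bounded in $s$ together with its first two derivatives. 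One integration by parts in $s$ together with the standard bound $\bigl|\int e^{its^{2}}\phi(s)\,ds\bigr|\lesssim t^{-1/2}(\|\phi\|_{\infty}+\|\phi'\|_{1})$ gives the rate $(1+t)^{-3/2}$. To get the $\|h\|_{1}+\|h\|_{2}$ norm in (\ref{eq:Regular}) I would factor one weight $\langle x\rangle^{-4}$ through Sobolev embedding $L^{1}\cap L^{2}\hookrightarrow\langle x\rangle^{-4}L^{2}$ and keep the other inside $K_{\pm}(s)$, while for (\ref{eq:SingularEst}) both weights stay inside the amplitude.

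For the estimate (\ref{eq:SingularEst}) involving resolvent powers $(L(\lambda_1)\pm ikE(\lambda_1)-0)^{-n}$ with $k=2,3$ and $n=1,2$, the key is that assumption {\bf(SA)} yields $kE(\lambda_1)>\lambda_1$, so the points $\pm ikE(\lambda_1)$ lie strictly \emph{inside} the branches of $\sigma_{\rm ess}(L(\lambda_1))$, away from the thresholds and from any discrete eigenvalue. Thus $\mathcal{R}^{\pm}(\tau)$ is smooth in $\tau$ near $\tau=\pm kE(\lambda_1)$, and composition with $(L\pm ikE-0)^{-n}$ simply inserts the smooth multiplier $(i\tau\mp ikE)^{-n}$ into the contour representation. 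A few additional integrations by parts in $s$ absorb this pole without spoiling the decay rate, since the integrand remains integrable after the change of variables.

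The step I expect to be the main obstacle is controlling the regularity of $s\mapsto K_{\pm}(s)$ at $s=0$: under $({\rm Thresh}_{\lambda_1})$ one must show that $\mathcal{R}^{+}(\tau)-\mathcal{R}^{-}(\tau)$ vanishes to the right order at the threshold so that $sK_{\pm}(s)$ is genuinely $C^{1}$, with no spurious boundary term after integration by parts. In the scalar case this follows from the Agmon--Jensen--Kato resolvent expansion; here the non-self-adjointness of $L(\lambda_1)$ forces a Grushin / Schur-complement reduction at the threshold, in which the absence of a threshold resonance manifests as invertibility of a finite-dimensional effective operator. Once this regularity is established, the rest of the argument reduces to standard weighted $L^{2}$ bookkeeping.
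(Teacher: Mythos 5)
The paper does not actually prove this lemma; the preceding sentence says it is ``taken from \cite{GaWe}, Theorem 5.7, p. 280.'' So there is no in-paper argument to compare against, and your Jensen--Kato / limiting-absorption / contour-deformation plan is the standard route one would use, and almost certainly what is done in the cited reference.

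However, your treatment of the resolvent powers in \eqref{eq:SingularEst} contains a genuine error. You write that composition with $(L\pm ikE-0)^{-n}$ ``inserts the smooth multiplier $(i\tau\mp ikE)^{-n}$'' into the spectral representation, and then propose to ``absorb this pole'' by further integrations by parts. Both halves of that sentence are wrong, and they cancel each other out into an accidental near-miss. First, the multiplier carried onto the branch selected by $P_\pm$ (where the spectral variable is $i\tau$ with $\tau\in[\lambda_1,\infty)$ or $(-\infty,-\lambda_1]$) is $(i\tau\pm ikE)^{-n}$, not $(i\tau\mp ikE)^{-n}$; your sign flip is what makes you believe there is an interior pole. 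With the correct sign and the paper's same-sign pairing, the zero of $\tau\pm kE$ lies on the \emph{opposite} branch from the one $P_\pm$ selects (this is exactly where $kE(\lambda_1)>\lambda_1$ is used), so the multiplier is bounded and smooth over the entire contour and there is no singularity at all. The correct argument is therefore much simpler than you suggest: the resolvent power is a uniformly bounded $C^\infty$ multiplier and the proof of \eqref{eq:SingularEst} reduces to the proof of \eqref{eq:Regular}. Second, even if there had been a genuine interior pole, integrating by parts in $s$ would raise, not lower, the order of the singularity; ``absorbing a pole'' that way is not a valid maneuver. One further point of precision: for the $t^{-3/2}$ rate you need the boundary term at $s=0$ to vanish, which comes from the spectral density (the discontinuity $\mathcal{R}^+-\mathcal{R}^-$) vanishing like $\sqrt{\tau-\lambda_1}\sim s$ at the threshold in the absence of a resonance; you flag this as ``the main obstacle'' but describe the required cancellation somewhat loosely, and it should be tied explicitly to the $s$-factor produced by the change of variables $\tau=\pm\lambda_1+s^2$.
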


Apply the Duhamel's principle on Equation (~\ref{estimater21r22}) and use the observation that the
operators $P_{+},$ $P_{-}$ and $L(\lambda_{1})$ commute with each
other to find
\begin{equation}\label{rewriter21r22}
\|\langle x\rangle^{-4}P_{c}^{\lambda_{1}}R_{\geq 4}\|_{2}
\leq \sum_{l=1}^{4}A_{l},
\end{equation} with $$A_{1}:=\|\langle x\rangle^{-4}e^{tL(\lambda_{1})+i a(t,0)(P_{+}-P_{-})}P_{c}^{\lambda_{1}}R_{\geq 4}(0)\|_{2}$$ where $a(t,s)\ := \int_{s}^{t}
[\dot{\gamma}(\tau)+\lambda(\tau)-\lambda_{1}]
\ d\tau\in \mathbb{R},$
$$A_{2}:=\int_{0}^{t}\|\langle x\rangle^{-4}e^{(t-s)L(\lambda_{1})+ia(t,s)(P_{+}-P_{-})}P_{c}^{\lambda_{1}}[O_{1}R_{\geq 4}-P_{c}^{\lambda}Loc]\|_{2}ds,$$
$$A_{3}:=\int_{0}^{t}\|\langle x\rangle^{-4}e^{(t-s)L(\lambda_{1})+ia(t,s)(P_{+}-P_{-})}P_{c}^{\lambda_{1}}P_{c}^{\lambda}\mathcal{G}_{1}(s)\|_{2}ds,$$
and
$$A_{4}:=\int_{0}^{t}\|\langle x\rangle^{-4}e^{(t-s)L(\lambda_{1})+ia(t,s)(P_{+}-P_{-})}P_{c}^{\lambda_{1}}NonLoc\|_{2} ds.$$
\\ \\

Now we estimate $A_{k}, k=1,2,3,4.$ In what follows we use repeatedly the assumption $10 |z_+|\geq |z|\geq \frac{1}{10}|z_{-}|$ and the fact
$$
e^{ i a(t_{2},t_{1})(P_{+}-P_{-})}
=e^{i a(t_{2},t_{1})}P_{+}+e^{-i a(t_{2},t_{1})}P_{-}:
\langle x\rangle^{k} L^{2} \rightarrow \langle x\rangle^{k} L^{2}
$$ is uniformly bounded for any $k\in \mathbb{R}$.
\begin{itemize}
\item[(1)] By the propagator estimate ~\eqref{eq:SingularEst} we have
$$
\begin{array}{lll}
A_{1}&\leq &\|\langle x\rangle^{-4} e^{tL(\lambda(T))}P_{\pm}\vec{R}(0)\|_{2}+\|\langle x\rangle^{-4} e^{tL(\lambda(T))}P_{\pm}\displaystyle\sum_{m+n=2,3}R_{m,n}(0)\|_{2}\\
&\lesssim & (1+t)^{-\frac{3}{2}}[\|\langle x\rangle^{4}\vec{R}(0)\|_2+|z_0|^2]\\
&\lesssim &(1+t)^{-\frac{3}{2}}|z_{0}|^2.
\end{array}
$$
\item[(2)]
Equation ~\eqref{eq:IntegralKernel} below and the estimates of $O_{1}R_{\geq 4}$ in ~\eqref{EstO1}, $Loc$ in Proposition ~\ref{Prop:NonlinearRem} imply that for any time $t\leq T$ $$
\begin{array}{lll}
A_{2}&\lesssim &\int_{0}^{t}(1+t-s)^{-\frac{3}{2}}[|z_{0}|^{2}(1+s)^{-\frac{3}{2}}+\delta_{\infty}|z_{0}|^{2}(|z_{0}|^{-2}+\delta_{\infty}^2 s)^{-1} [1+\delta_{\infty}\mathcal{M}_{3}+\delta_{\infty}^3\mathcal{M}_{3}^2]\ ds\\
& \lesssim & [|z_{0}|^{2}(1+t)^{-\frac{3}{2}}+\delta_{\infty}|z_{0}|^{2}(|z_{0}|^{-2}+\delta_{\infty}^2 t)^{-1}] [1+\delta_{\infty}\mathcal{M}_{3}+\delta_{\infty}^3\mathcal{M}_{3}^2].
\end{array}
$$
\item[(3)]
Equation ~\eqref{eq:SingularEst} and $\mathcal{G}_{1}$ of Equation ~\eqref{eq:G1} imply
$$
\begin{array}{lll}
A_{3}&\lesssim&
\int_{0}^{t}{(1+t-s)^{-\frac{3}{2}}}[\delta_{\infty}|z(s)|^{4}+Remainder(s)|z(s)|] ds\\
& \lesssim &[|z_{0}|^{2}(1+s)^{-\frac{3}{2}}+\delta_{\infty}|z_{0}|^{2}(|z_{0}|^{-2}+\delta_{\infty}^2 s)^{-1}] [1+\delta_{\infty}\mathcal{M}_{3}+\delta_{\infty}^3\mathcal{M}_{3}^2] .
\end{array}
$$
\item[(4)] We control $A_{4}$ by the form of $NonLoc$ in Proposition ~\ref{Prop:NonlinearRem} and the propagator estimate ~\eqref{eq:Regular}:
$$
\begin{array}{lll}
A_{4}&\lesssim &\int_{0}^{t}(1+t-s)^{-\frac{3}{2}}[\|\vec{R}(s)\|_{3}^{3}+\|\vec{R}(s)\|_{6}^3]ds\\
&\lesssim &\int_{0}^{t}(1+t-s)^{-\frac{3}{2}}[\|\vec{R}(s)\|_{2}^{2}\|\vec{R}\|_{\infty}+\|\vec{R}(s)\|_{\infty}^2\|\vec{R}\|_{2}]ds\\
&\lesssim & |z_{0}|^{2}\int_{0}^{t}(1+t-s)^{-\frac{3}{2}}[(1+s)^{-3/2}+\delta_{\infty}(|z_{0}|^{-2}+\delta_{\infty}^{2}s)^{-1}]ds [\mathcal{M}_{4}^2\mathcal{M}_1+ \mathcal{M}_4\mathcal{M}_{1}^2].
\end{array}
$$
Apply ~\eqref{eq:IntegralKernel} to obtain
$$A_{4}\lesssim |z_{0}|^2[(1+t)^{-\frac{3}{2}}+\delta_{\infty} (|z_{0}|^{-2}+\delta_{\infty}^{2}t)^{-1}] [\mathcal{M}_{4}^2\mathcal{M}_1+ \mathcal{M}_4\mathcal{M}_{1}^2].$$
\end{itemize}
Collecting the estimates above we have $$
\|\langle x\rangle^{-4 }R_{\geq 4}\|_{2}\lesssim [|z_{0}|^2 (1+t)^{-\frac{3}{2}}+ \delta_{\infty}|z_{0}|^{2}(|z_{0}|^{-2}+\delta_{\infty}^{2}t)^{-1}][1+\mathcal{M}_{4}\mathcal{M}_{1}^2+\mathcal{M}_4^2\mathcal{M}_1+\delta_{\infty}(\mathcal{M}_{3}+\mathcal{M}_{3}^2)].
$$

The proof is complete by the definition of $\mathcal{M}_{3}$ in Equation ~\eqref{eq:EstM3}.

\begin{flushright}
$\square$
\end{flushright}
In the proof we used the following lemma.
\begin{lemma}
\begin{equation}\label{eq:IntegralKernel}
\int_{0}^{t} \frac{1}{(1+t-s)^{\frac{3}{2}}} (|z_{0}|^{-2}+\delta_{\infty}^{2}s)^{-1}\ ds\lesssim (|z_{0}|^{-2}+\delta_{\infty}^{2}t)^{-1}.
\end{equation}
\end{lemma}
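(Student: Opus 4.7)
The plan is to split the integral dyadically at the midpoint $s=t/2$ and estimate each piece separately. For brevity write $a := |z_0|^{-2}$ and $b := \delta_\infty^2$, so that the claim reads
$$\int_0^t (1+t-s)^{-3/2}\, (a+bs)^{-1}\, ds \;\lesssim\; (a+bt)^{-1}.$$

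For the upper piece $s \in [t/2,t]$ I would use $a+bs \geq (a+bt)/2$ to pull out the factor $2(a+bt)^{-1}$, then bound $\int_{t/2}^t (1+t-s)^{-3/2}\, ds \leq 2$, giving the desired bound on this contribution.

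For the lower piece $s \in [0,t/2]$ I would use $(1+t-s)^{-3/2} \leq (1+t/2)^{-3/2}$ and integrate $(a+bs)^{-1}$ explicitly to obtain the upper bound $b^{-1}(1+t/2)^{-3/2}\log(1+bt/(2a))$. What then remains is to verify
$$(1+t/2)^{-3/2}\, b^{-1}\,\log\!\bigl(1+bt/(2a)\bigr) \;\lesssim\; (a+bt)^{-1}.$$
I would do this by a two-regime analysis. When $bt \leq a$, use $\log(1+x) \leq x$ (applicable since $bt/(2a) \leq 1/2$) together with the elementary inequality $t \leq (1+t/2)^{3/2}$ to reduce the estimate to $(1+t/2)^{-3/2}\, t/(2a) \lesssim 1/a \sim (a+bt)^{-1}$. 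When $bt > a$, use the elementary inequality $\log(1+x) \leq \sqrt{x}$ (which follows from monotonicity of $\sqrt{x}-\log(1+x)$, noting $(1-\sqrt{x})^2 \geq 0$) to reduce the claim to $\sqrt{bt^3/(2a)} \lesssim (1+t/2)^{3/2}$, which in turn reduces via $(1+t/2)^{3/2} \geq (t/2)^{3/2}$ to the condition $b \lesssim a$.

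The only obstacle is bookkeeping rather than conceptual depth: namely the case analysis near the threshold $bt \sim a$ and the need for $b \lesssim a$ to dominate the logarithm. Both ingredients are furnished by the standing smallness hypothesis $|z_0|/\delta_\infty \ll 1$ from Proposition \ref{Prop:KeyEst}, which yields $b/a = |z_0|^{2}\delta_\infty^{2} \ll 1$. No deeper machinery is needed.
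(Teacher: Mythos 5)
Your proof is correct and essentially identical in structure to the paper's: both split the integral at $s=t/2$, bound the tail $s\in[t/2,t]$ by pulling out $(|z_0|^{-2}+\delta_\infty^2 t)^{-1}$ and integrating the kernel, and bound the head $s\in[0,t/2]$ by extracting $(1+t/2)^{-3/2}$, computing the remaining logarithmic integral explicitly, and dominating the logarithm via $\log(1+x)\lesssim\min(x,\sqrt{x})$ together with the standing smallness $|z_0|\delta_\infty\ll 1$. The paper packages this final step through the substitution $u=\delta_\infty^2|z_0|^2 t$ and a single chain of power-peeling inequalities, whereas you make the two regimes $bt\leq a$ and $bt>a$ explicit, but the underlying estimates coincide.
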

\begin{proof}
We divide the regime into two parts $0\leq s\leq \frac{t}{2}$ and $\frac{t}{2}\leq s\leq t.$ For the latter
$$\int_{\frac{t}{2}}^{t} \frac{1}{(1+t-s)^{\frac{3}{2}}} (|z_{0}|^{-2}+\delta_{\infty}^{2}s)^{-1}\ ds\leq 2(|z_{0}|^{-2}+\delta_{\infty}^{2}t)^{-1} \int_{\frac{t}{2}}^{t} \frac{1}{(1+t-s)^{\frac{3}{2}}}ds\lesssim (|z_{0}|^{-2}+\delta_{\infty}^{2}t)^{-1}.$$

The estimate of the first is more involved. By direct computation we have
$$
\begin{array}{lll}
I:&=&\int_{0}^{\frac{t}{2}} \frac{1}{(1+t-s)^{\frac{3}{2}}} (|z_{0}|^{-2}+\delta_{\infty}^{2}s)^{-1}\ ds \\
& \leq& 2\sqrt{2} (1+t)^{-\frac{3}{2}} \int_{0}^{\frac{t}{2}}  (|z_{0}|^{-2}+\delta_{\infty}^{2}s)^{-1}\ ds\\
&=&2\sqrt{2}\delta_{\infty}^{-2} (1+t)^{-\frac{3}{2}}ln[1+\frac{1}{2}|z_{0}|^2\delta_{\infty}^2 t].
\end{array}
$$ Change variable $u=\delta_{\infty}^2 |z_{0}|^2 t$ to obtain $$
\begin{array}{lll}
I&\lesssim&  \delta_{\infty}^{-2}(1+\delta_{\infty}^{-2}|z_{0}|^{-2}u)^{-\frac{3}{2}}ln(1+u)\\
&\lesssim &|z_{0}|^2 u^{-1}(1+u)^{-\frac{1}{2}}ln(1+u)\\
&\lesssim& |z_{0}|^2(1+u)^{-1}=(|z_{0}|^{-2}+\delta_{\infty}^2 t)^{-1}.
\end{array}
$$
The proof is complete.
\end{proof}
\section{Proof of ~\eqref{eq:approxPos}}\label{sec:approxPos}
\begin{proof}
In the next we study $z^* \Gamma(z,\bar{z})z$ defined in ~\eqref{energy-id}. To prepare for the proof we list a few estimates.
\begin{itemize}
\item[(1)]
If $n=-2,-1,0,1,2$ then
\begin{equation}\label{mapping}
[L(\lambda)+in E(\lambda)+0]^{-1}P_{c}=[(-\Delta+V+\lambda)J+inE(\lambda)+0]^{-1}P_{c}^{lin}\left(
\begin{array}{ll}
1&0\\
0&1
\end{array}
\right)+\mathcal{O}(\|\langle x\rangle^{4}\phi^{\lambda}\|^{2\sigma}_{H^{2}})
\end{equation} as operators mapping from space $\langle x\rangle^{-4}L^{\infty}$ to space $\langle x\rangle^{-4}L^{\infty}.$ This is resulted from the estimate of $P_{c}-P_{c}^{lin}$ in ~\eqref{eq:projection} and the fact $L(\lambda)$ defined in ~\eqref{eq:opera} is of the form $L(\lambda)=(-\Delta+V+\lambda)J+\mathcal{O}((\phi^{\lambda})^{2\sigma}).$
\item[(2)]
To diagonalize the matrix $J$ we define a unitary $2\times 2$ matrix $U$ as
\begin{equation}\label{eq:MatrixA}
U:=\frac{1}{\sqrt{2}}\left(
\begin{array}{lll}
1&i\\
i&1
\end{array}
\right)
\end{equation} which makes $$U^*J U=i\sigma_{3}$$ with $\sigma_{3}$ being the Pauli matrix.
\item[(3)]
Apply ~\eqref{eq:asympto} to derive the leading orders from $\displaystyle\sum_{k=1}^{N}z_k G_{k}$ to obtain
\begin{equation}\label{eq:higherOrd}
\begin{array}{lll}
\displaystyle\sum_{k=1}^{N}z_k G_{k}&=&JN_{2,0}\\
&=&\sigma (\phi^{\lambda})^{2\sigma-1}\left(
\begin{array}{lll}
-2i (z\cdot\xi)(z\cdot \eta)\\
-(2\sigma+1) (z\cdot \xi)^2+ (z\cdot\eta)^2
\end{array}
\right)\\
&=& -2\sigma \delta_{\infty}^{2\sigma-1} (\phi_{lin})^{2\sigma-1} (z\cdot\xi^{lin})^2 \left(
\begin{array}{lll}
i\\
\sigma
\end{array}
\right)+\mathcal{O}(\delta_{\infty}^{4\sigma-1}|z|^2).
\end{array}
\end{equation}
\end{itemize}

Now we begin perturbation-expanding the function $z^* \Gamma(z,\bar{z}) z$ in the variable $\delta(\lambda).$ Use ~\eqref{mapping}-~\eqref{eq:higherOrd} to obtain $$
\begin{array}{lll}
& &z^* \Gamma(z,\bar{z})z\\
&=&-4\sigma^2 \delta^{4\sigma-2}(\lambda) Re\langle [(-\Delta+V+\lambda)J+2iE(\lambda)-0]^{-1}P_c^{lin}(\phi_{lin})^{2\sigma-1}(z\cdot\xi^{lin})^2 \left(
\begin{array}{lll}
i\\
\sigma
\end{array}
\right),\\
& &i(\phi_{lin})^{2\sigma-1}(z\cdot\xi^{lin})^2 J \left(
\begin{array}{lll}
i\\
\sigma
\end{array}
\right)\rangle+\mathcal{O}[\delta_{\infty}^{4\sigma-1}|z|^4]\\
&=&-4\sigma^2\delta^{4\sigma-2}(\lambda) Re\langle U^* [(-\Delta+V+\lambda)J+2iE(\lambda)-0]^{-1}U \ P_c^{lin}(\phi_{lin})^{2\sigma-1}(z\cdot\xi^{lin})^2 U^{*}\left(
\begin{array}{lll}
i\\
\sigma
\end{array}
\right),\\
& &(\phi_{lin})^{2\sigma-1}(z\cdot\xi^{lin})^2 U^* J i\left(
\begin{array}{lll}
i\\
\sigma
\end{array}
\right)\rangle+\mathcal{O}[\delta_{\infty}^{4\sigma-1}|z|^4].
\end{array}
$$ In the new expression certain terms can be computed explicitly: \begin{equation}\label{eq:diagonal}
\begin{array}{lll}
& &U^*[(-\Delta+V+\lambda)J+2iE(\lambda)-0]^{-1} U\\
&=&\left(
\begin{array}{lll}
-i[-\Delta+V+\lambda +2E(\lambda)]^{-1}&0\\
0& i[-\Delta+V+\lambda-2E(\lambda)-i0]^{-1}
\end{array}
\right),
\end{array}
\end{equation} and
\begin{equation}\label{eq:EqZ}
i U^*J \left(
\begin{array}{lll}
 i\\
 \sigma
 \end{array}
\right)=\frac{1}{\sqrt{2}}\left(
\begin{array}{lll}
i(\sigma-1)\\
\sigma+1
\end{array}
\right),\ \ U^* \left(
\begin{array}{lll}
 i\\
 \sigma
 \end{array}
\right)=\frac{1}{\sqrt{2}}\left(
\begin{array}{lll}
i(1-\sigma)\\
\sigma+1
\end{array}
\right)
\end{equation}

Put this back into the expression to find
\begin{equation}\label{eq:perExp}
z^* \Gamma(z,\bar{z}) z=z^* \Gamma_0(z,\bar{z}) z+H_{2,2}+\mathcal{O}(\delta_{\infty}^{4\sigma-1}|z|^4)
\end{equation} with $$
\begin{array}{lll}
H_{2,2}&:=&-2\delta^{4\sigma-2}(\lambda)\sigma^2(\sigma-1)^2\times\\
 & &\\
 & &Re\langle i(-\Delta+V+\lambda+2E(\lambda))^{-1}P_{c}^{lin}(\phi_{lin})^{2\sigma-1}(z\cdot\xi^{lin})^2, (\phi_{lin})^{2\sigma-1}(z\cdot\xi^{lin})^2\rangle.
\end{array}
$$
We observe that
\begin{equation}\label{eq:H22}
H_{2,2}=0
\end{equation}
by the fact that the operator $(-\Delta+V+\lambda+2E(\lambda))^{-1}P_{c}^{lin}$ is self-adjoint, hence that $\langle (-\Delta+V+\lambda+2E(\lambda))^{-1}P_{c}^{lin}\Omega, \Omega\rangle$ is real for any $\Omega.$
 Hence ~\eqref{eq:perExp} is the desired estimate and the proof is complete.
\end{proof}





 %


\section{Proof of Equation ~\eqref{eq:small}}\label{sec:periodic}
Recall the ideas present after Lemma ~\ref{LM:junk}, which basically is that the functions $\Pi_{m,n},\ m\not=n$ are almost periodic with period $\frac{2\pi}{E(m-n)}$.

Compute directly to obtain
\begin{equation}\label{eq:IntegrateParts}
\begin{array}{lll}
\int_{0}^{\infty}\Pi_{m,n}\ ds&=&\int_{0}^{\infty}\Pi_{m,n}+\frac{d}{ds}[\frac{1}{iE(\lambda)(m-n)}\Pi_{m,n}]\ ds-\int_{0}^{\infty}\frac{d}{ds}[\frac{1}{iE(\lambda)(m-n)}\Pi_{m,n}]\ ds\\
& &\\
&=&\int_{0}^{\infty}\dot{\lambda}\partial_{\lambda}(\frac{1}{iE(\lambda)(m-n)}\Pi_{m,n})
+\frac{1}{iE(\lambda)(m-n)}\partial_{z}\Pi_{m,n}\cdot[\dot{z}
+iE(\lambda)z]\\
& &\\
& &+\frac{1}{iE(\lambda)(m-n)}\partial_{\bar{z}}\Pi_{m,n}\cdot[\dot{\bar{z}}-iE(\lambda)\bar{z}]\ ds+\frac{1}{iE(\lambda)(m-n)}\Pi_{m,n}|_{t=0}.
\end{array}
\end{equation} It is easy to see that the last term satisfies the estimate
$$\frac{1}{iE(\lambda)(m-n)}\Pi_{m,n}|_{t=0}=o(|z_0|^2).$$

Collecting the estimates above we prove ~\eqref{eq:small}.

The proof is complete.
\begin{flushright}
$\square$
\end{flushright}

\section{Proof of Equation ~\eqref{eq:KeyTerm}}\label{sec:compare}
To facilitate our discussions we define
$$\rho:=z\cdot\xi,\ \omega:=z\cdot\eta.$$

Recall the definition of $\Pi_{2,2}$ in ~\eqref{eq:DetailLambda}. By direct computation we find
\begin{equation}\label{eq:Lambda22}
\Pi_{2,2}=-\langle
JN_{2,2},\left(
\begin{array}{lll}
\phi^{\lambda}\\
0
\end{array}\right)
\rangle+\Upsilon_{1,1}\langle
R_{1,1},\left(
\begin{array}{lll}
0\\
\phi^{\lambda}
\end{array}\right)
\rangle+\Upsilon_{1,1}\sum_{n=1}^{N}Q_{1,1}^{(n)}\langle \phi^{\lambda},\eta^{n}\rangle.
\end{equation}

Further expand the first
term to obtain
\begin{equation}\label{eq:DecN22}
\left\langle
JN_{2,2}\ ,\ \left(
\begin{array}{lll}
\phi^{\lambda}\\
0
\end{array}\right)
\right\rangle=\sum_{n=1}^{7}(\Phi_{n}+\overline{\Phi_{n}})+\Omega_{\sigma>1}
\end{equation}
with
$$\Phi_{1}:=\left\langle R_{2,0},\sigma (\phi^{\lambda})^{2\sigma-1}\left(
\begin{array}{lll}
-\frac{i}{2}(2\sigma-1)\rho\omega\\
-\frac{3}{4}\omega^{2}+\frac{1}{4} (2\sigma-1)\rho^{2}
\end{array}
\right)\right\rangle,$$
$$\Phi_{2}:=\left\langle \left(
\begin{array}{lll}
A_{2,0}^{(1)}\partial_{\lambda}\phi^{\lambda}+\displaystyle\sum_{n=1}^{N}P_{2,0}^{(n)}\xi_{n}\\
A_{2,0}^{(2)}\phi^{\lambda}+\displaystyle\sum_{n=1}^{N}Q_{2,0}^{(n)}\eta_{n}
\end{array}
\right),\sigma (\phi^{\lambda})^{2\sigma-1}\left(
\begin{array}{lll}
-\frac{i}{2}(2\sigma-1)\rho\omega\\
-\frac{3}{4}\omega^{2}+\frac{1}{4} (2\sigma-1)\rho^{2}
\end{array}
\right)
\right\rangle,$$
$$ \Phi_{3}:=\left\langle \left(
\begin{array}{lll}
\displaystyle\sum_{n=1}^{N}P_{1,1}^{(n)}\xi_{n}\\
\displaystyle\sum_{n=1}^{N}Q_{1,1}^{(n)}\eta_{n}
\end{array}
\right)+R_{1,1},\sigma(\phi^{\lambda})^{2\sigma-1}\left(
\begin{array}{lll}
\frac{i}{2}(2\sigma-1)(\rho\bar\omega-\bar\rho\omega)\\
\frac{3}{2}\omega\bar\omega+\frac{1}{2}(2\sigma-1)\rho\bar\rho
\end{array}
\right)\right\rangle,$$
$$\Phi_{4}:=\left\langle \left(
\begin{array}{lll}
\displaystyle\sum_{n=1}^{N}P_{2,1}^{(n)}\xi_{n}\\
\displaystyle\sum_{n=1}^{N}Q_{2,1}^{(n)}\eta_{n}
\end{array}
\right)+\left(
\begin{array}{lll}
A_{2,1}^{(1)}\partial_{\lambda}\phi^{\lambda}\\
A_{2,1}^{(2)}\phi^{\lambda}
\end{array}
\right),\left(
\begin{array}{lll}
-i\sigma (\phi^{\lambda})^{2\sigma}\omega\\
\sigma (\phi^{\lambda})^{2\sigma}\rho
\end{array}
\right)\right\rangle,$$
$$\Phi_{5}:=\left\langle R_{2,1},\left(
\begin{array}{lll}
-i\sigma (\phi^{\lambda})^{2\sigma}\omega\\
\sigma (\phi^{\lambda})^{2\sigma}\rho
\end{array}
\right)\right\rangle,$$
$$\Phi_{6}:=\left\langle 2\sigma(\phi^{\lambda})^{2\sigma}[A_{2,0}^{(1)}\partial_{\lambda}\phi^{\lambda}+
\sum_{n=1}^{N}P_{2,0}^{(n)}\xi_{n}+R_{2,0}^{(1)}],
[A_{2,0}^{(2)}\phi^{\lambda}+\sum_{n=1}^{N}Q_{2,0}^{(n)}\eta_{n}+R_{2,0}^{(2)}]\right\rangle,$$
$$\Phi_{7}:=\sigma\ \left\langle (\phi^{\lambda})^{2\sigma}[\sum_{n=1}^{N}P_{1,1}^{(n)}\xi_{n}+R^{(1)}_{1,1}],
\sum_{n=1}^{N}Q_{1,1}^{(n)}\eta_{n}+R_{1,1}^{(2)}\right \rangle,$$
$\Omega_{\sigma>1}$ only appears in $\sigma>1$
$$\Omega_{\sigma>1}:=\frac{1}{8i}\sigma(\sigma-1)(2\sigma-1)\langle (\phi^{\lambda})^{2\sigma-2}|\rho|^2, \rho\bar\omega-\bar\rho \omega\rangle-\frac{3}{8i}\sigma(\sigma-1)\langle (\phi^{\lambda})^{2\sigma-2}|\omega|^2 ,\bar\omega \rho-\bar\rho \omega\rangle.$$
The terms defined above satisfy the following estimate:
\begin{lemma}\label{LM:Junk}
\begin{equation}\label{eq:JunkTerms2}
\sum_{n=2}^{7}|\Phi_{n}+\overline{\Phi_{n}}|+|\Upsilon_{1,1}\langle
R^{(2)}_{1,1},\phi^{\lambda}\rangle|+|\Upsilon_{1,1}\sum_{n=1}^{d}Q_{1,1}^{(n)}\langle \phi^{\lambda},\eta^{n}\rangle|+|\Omega_{\sigma>1}| \lesssim \delta_{\infty}^{4\sigma-1}|z|^4,
\end{equation}
\end{lemma}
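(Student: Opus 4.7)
\textbf{Proof proposal for Lemma~\ref{LM:Junk}.}

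The plan is to prove each of the four collections of terms separately, all by direct estimation based on the tabulated orders from Proposition~\ref{Prop:Parameters}. The central input is the list of scaling estimates: $\phi^\lambda=O(\delta_\infty)$, $\partial_\lambda\phi^\lambda=O(\delta_\infty^{-(2\sigma-1)})$, and for $m+n=2$ the bounds $|A^{(1)}_{m,n}|\lesssim\delta_\infty^{2(2\sigma-1)}|z|^2$, $|A^{(2)}_{m,n}|\lesssim\delta_\infty^{2\sigma-2}|z|^2$, $|P^{(k)}_{m,n}|,|Q^{(k)}_{m,n}|,\|\langle x\rangle^{-4}R_{m,n}\|_{2}\lesssim\delta_\infty^{2\sigma-1}|z|^2$, together with the analogous shifted bounds at order $m+n=3$ and $|\Upsilon_{1,1}|\lesssim\delta_\infty^{2\sigma-2}|z|^2$. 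Each $\Phi_n$ is a pairing of two localized functions, so I would apply Cauchy--Schwarz in $\langle x\rangle^{-4}L^2$ against $\langle x\rangle^{4}L^2$, after which the claimed power of $\delta_\infty$ is just a matter of adding up the exponents contributed by each building block.

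I would first dispatch the easy terms $\Phi_6$ and $\Phi_7$. Both pair quantities of size $\delta_\infty^{2\sigma-1}|z|^2$ against each other against a factor of $(\phi^\lambda)^{2\sigma}=O(\delta_\infty^{2\sigma})$, giving $O(\delta_\infty^{6\sigma-2}|z|^4)$, which is strictly smaller than $\delta_\infty^{4\sigma-1}|z|^4$ whenever $\sigma\geq 1$. Similarly the two $\Upsilon_{1,1}$ terms combine $|\Upsilon_{1,1}|\lesssim\delta_\infty^{2\sigma-2}|z|^2$ with an inner product $\langle\cdot,\phi^\lambda\rangle$ that contributes an extra $\delta_\infty$ from $\phi^\lambda$ paired against $R_{1,1}^{(2)}$ or $Q^{(n)}_{1,1}\eta_n$ (each of size $\delta_\infty^{2\sigma-1}|z|^2$); the $\Omega_{\sigma>1}$ piece contains a prefactor $(\sigma-1)$ which kills it at $\sigma=1$ and for $\sigma>1$ is handled using the much stronger hypothesis $|z_0|\leq\delta_\infty^{C(\sigma)}$ of Theorem~\ref{THM:MainTheorem2} to absorb the $\delta_\infty^{2\sigma-2}$ prefactor.

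The bulk of the work is $\Phi_2,\Phi_3,\Phi_4,\Phi_5$. Here the first argument of each inner product is a $(2,0)$-, $(1,1)$-, or $(2,1)$-type combination of normal-form corrections and $R_{m,n}$'s, while the second is a test vector consisting of $\sigma(\phi^\lambda)^{2\sigma-1}$ (or $(\phi^\lambda)^{2\sigma}$ in $\Phi_4,\Phi_5$) multiplied by a quadratic or linear polynomial in $\rho,\omega,\bar\rho,\bar\omega$. I would rewrite each summand using the explicit expressions for $A^{(j)}_{m,n},P^{(k)}_{m,n},Q^{(k)}_{m,n}$ from \eqref{eq:A1}--\eqref{eq:pq20and02}, so that numerator and denominator factors such as $\langle\phi^\lambda,\partial_\lambda\phi^\lambda\rangle^{-1}$ can be absorbed into the correct power of $\delta_\infty$ via \eqref{eq:denominator}. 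The hard part -- and the main obstacle -- is gaining the extra factor of $\delta_\infty$ needed to pass from the crude Cauchy--Schwarz bound $O(\delta_\infty^{4\sigma-2}|z|^4)$ down to the required $O(\delta_\infty^{4\sigma-1}|z|^4)$. This last $\delta_\infty$ does not fall out of bare size counting; rather, it arises from expanding $\phi^\lambda,\partial_\lambda\phi^\lambda,\xi,\eta$ around the linear eigenfunctions via \eqref{eq:LambdaPhi2}--\eqref{eq:asympto} and using the orthogonality $\langle\phi_{lin},\xi^{lin}_n\rangle=0$ together with the defining identities for $A^{(1)}_{m,n},A^{(2)}_{m,n}$ to cancel the would-be leading term against the corresponding $(2\sigma-1)$-power of $\phi_{lin}$ in the test vector. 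Concretely, I would substitute $\phi^\lambda=C_0\delta_\infty\phi_{lin}+O(\delta_\infty^{2\sigma+1})$ throughout, observe that the leading contribution to each $\Phi_n$, $n\geq 2$, is either algebraically $0$ (because $A^{(1)}_{m,n}$ or the $P^{(k)}_{m,n},Q^{(k)}_{m,n}$ were constructed precisely to cancel the corresponding component of $JN_{m,n}$ against $\phi^\lambda$ or $\eta_n$) or picks up an extra factor of $\delta_\infty$ from the $O(\delta_\infty^{2\sigma+1})$ correction to $\phi^\lambda$. The bookkeeping is tedious but mechanical once one sets up the expansions uniformly, and the resulting bound $\delta_\infty^{4\sigma-1}|z|^4$ on each of $\Phi_2,\ldots,\Phi_5$, combined with the direct bounds on $\Phi_6,\Phi_7,\Omega_{\sigma>1}$ and the $\Upsilon_{1,1}$ terms proved above, gives the lemma.
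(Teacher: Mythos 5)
Your proposal correctly isolates the central difficulty — bare power counting on $\Phi_2,\dots,\Phi_5$ gives only $O(\delta_\infty^{4\sigma-2}|z|^4)$ and one extra factor of $\delta_\infty$ must be extracted — and the direct power-counting dispatches of $\Phi_6$, $\Phi_7$ and the two $\Upsilon_{1,1}$ terms are sound. But the mechanism you propose for the extra $\delta_\infty$ (expanding $\phi^\lambda,\xi,\eta$ around $\phi_{lin},\xi^{lin}$ and cancelling via the orthogonality $\langle\phi_{lin},\xi_n^{lin}\rangle=0$ together with the defining identities for $A^{(j)}_{m,n},P^{(k)}_{m,n},Q^{(k)}_{m,n}$) does not reach $\Phi_5$. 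That term is $\Phi_5=\langle R_{2,1},\,\sigma(\phi^{\lambda})^{2\sigma}(-i\omega,\rho)^{T}\rangle$ with $R_{2,1}$ a resolvent applied to a forcing — it contains no normal-form coefficient at all, so there is no defining identity to cancel against, and the orthogonality $\langle\phi_{lin},\xi^{lin}_n\rangle=0$ is not in play. The missing $\delta_\infty$ here (at $\sigma=1$) comes from a separate algebraic observation: after inserting the leading form of $R_{2,1}$ and conjugating $[(-\Delta+V+\lambda)J+iE(\lambda)]^{-1}$ by the unitary $U$ that diagonalizes $J$, the two constant $\mathbb{C}^2$-vectors $(i,1)^{T}$ and $(-i,1)^{T}$ on either side of the pairing become supported on complementary components, so the now-diagonal resolvent pairing is exactly zero — the step $\Theta_{2,2}=0$ in the paper's proof. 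Nothing in your outline produces this.

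There is also a concrete error in your treatment of $\Omega_{\sigma>1}$ for $\sigma>1$. You propose to absorb it by the initial-data hypothesis $|z_0|\le\delta_\infty^{C(\sigma)}$, but the lemma is an instantaneous bound in the running quantities $|z(t)|$ and $\delta_\infty$: passing from $\delta_\infty^{4\sigma-2}|z|^4$ to $\delta_\infty^{4\sigma-1}|z|^4$ would require a spare factor $\delta_\infty\gtrsim 1$, and trading a power of $|z(t)|$ for $|z_0|$ neither supplies that nor keeps the $|z|^4$ structure on the right-hand side. The paper's actual device is the spherical-symmetry hypothesis of Theorem~\ref{THM:MainTheorem2}, under which $\xi_m\eta_n-\xi_n\eta_m\equiv 0$ pointwise, hence $\rho\bar\omega-\bar\rho\omega\equiv 0$ and therefore $\Omega_{\sigma>1}\equiv 0$. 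The same symmetry also replaces the $J$-diagonalization argument for $\Phi_5$ when $\sigma>1$: one shows the leading quantities $D_1,D_2$ are real, so $\Phi_5+\overline{\Phi_5}=2\Re\Phi_5$ only sees subleading terms. Without these two structural observations the argument you sketch does not close.
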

The lemma will be proved in the later part of this section.

Now we prove ~\eqref{eq:KeyTerm}.\\
{\bf{Proof of Equation ~\eqref{eq:KeyTerm}}}
Collecting the estimates in Equations ~\eqref{eq:Lambda22}, ~\eqref{eq:DecN22} and Lemma ~\ref{LM:Junk} we have
\begin{equation}\label{eq:LastStep}
\Pi_{2,2}=-2Re\langle R_{2,0}, K_{2,0}\rangle+\mathcal{O}(\delta_{\infty}^{4\sigma-1}|z|^4)
\end{equation} with $$
\begin{array}{lll}
K_{2,0}&:=&\sigma (\phi^{\lambda})^{2\sigma-1}\left(
\begin{array}{lll}
-\frac{i}{2}(2\sigma-1)\rho\omega\\
-\frac{3}{4}\omega^{2}+\frac{1}{4} (2\sigma-1)\rho^{2}
\end{array}
\right)\\
&=&\frac{\sigma}{2}(\phi^{\lambda})^{2\sigma-1}\rho^2\left(
\begin{array}{lll}
-i(2\sigma-1)\\
\sigma-2
\end{array}
\right)+\mathcal{O}[\delta_{\infty}^{4\sigma-1}|z|^2]\\
&=&\frac{\sigma}{2}\delta^{2\sigma-1}(\lambda)\phi^{lin}(z\cdot\xi^{lin})^2\left(
\begin{array}{lll}
-i(2\sigma-1)\\
\sigma-2
\end{array}
\right)+\mathcal{O}[\delta_{\infty}^{4\sigma-1}|z|^2]
\end{array}
$$ and recall $\delta(\lambda)$ defined in ~\eqref{eqn:perturb}.

Use ~\eqref{mapping} to expand $R_{2,0}$ in the space $\langle x\rangle^{4}L^{\infty}$ $$
\begin{array}{lll}
R_{2,0}&=&(L(\lambda)+2iE(\lambda)-0)^{-1}P_{c}JN_{2,0}\\
&=&[(-\Delta+V+\lambda)J+2iE(\lambda)-0]^{-1}P_{c}^{lin} JN_{2,0}+\mathcal{O}(\delta_{\infty}^{4\sigma-1}|z|^2).
\end{array}
$$ The estimate for $JN_{2,0}$ in ~\eqref{eq:higherOrd} implies $$
-2Re\langle R_{2,0}, K_{2,0}\rangle=M_{2,2}+\mathcal{O}(\delta_{\infty}^{4\sigma-1}|z|^4)$$ with $$\begin{array}{lll}
M_{2,2}&:=&2\sigma^2 \delta^{4\sigma-2}(\lambda)\langle [(-\Delta+V+\lambda)J+2iE(\lambda)-0]^{-1}P_{c}^{lin} (\phi_{lin})^{2\sigma-1}(z\cdot\xi^{lin})^{2}\left(
\begin{array}{lll}
i\\
\sigma
\end{array}
\right),\\
& & \phi^{lin}(z\cdot\xi^{lin})^2\left(
\begin{array}{lll}
-i(2\sigma-1)\\
\sigma-2
\end{array}
\right)\rangle
\end{array}
$$

To make the expression easier we diagonalize the matrix operator $[(-\Delta+V+\lambda)J+2iE(\lambda)-0]^{-1}$ which is essentially to diagonalize the matrix $J$. Recall the definition of the unitary matrix $U$ in ~\eqref{eq:MatrixA}.
Put $UU^*=Id$ into appropriate places to obtain
$$\begin{array}{lll}
M_{2,2}&=&2\sigma^2 \delta^{4\sigma-2}(\lambda)\langle U^*[(-\Delta+V+\lambda)J+2iE(\lambda)-0]^{-1} U \ P_{c}^{lin} (\phi_{lin})^{2\sigma-1}(z\cdot\xi^{lin})^{2}\ U^* \left(
\begin{array}{lll}
i\\
\sigma
\end{array}
\right),\\
& & \phi^{lin}(z\cdot\xi^{lin})^2\ U^*\left(
\begin{array}{lll}
-i(2\sigma-1)\\
\sigma-2
\end{array}
\right)\rangle.
\end{array}
$$ A few terms in the expression can be computed explicitly:
$U^*[(-\Delta+V+\lambda)J+2iE(\lambda)-0]^{-1} U$ was computed in ~\eqref{eq:diagonal}, $$U^{*}\left(
\begin{array}{lll}
i\\
\sigma
\end{array}
\right)=\frac{1}{\sqrt{2}}\left(
\begin{array}{lll}
i(1-\sigma)\\
1+\sigma
\end{array}
\right),\ \ \ U^{*}\left(
\begin{array}{lll}
-i(2\sigma-1)\\
\sigma-2
\sigma
\end{array}
\right)=-\frac{1}{\sqrt{2}}\left(
\begin{array}{lll}
3i(\sigma-1)\\
\sigma+1
\end{array}
\right).$$
Put this back into the expression of $M_{2,2}$ to obtain
\begin{equation}\label{eq:fiEsti}
M_{2,2}=\frac{1}{2}z^{*} \Gamma_{0}(z,\bar{z})z+\tilde{M}_{2,2}
\end{equation} with
$$
\begin{array}{lll}
\tilde{M}_{2,2}&:=&-3\delta^{4\sigma-2}(\lambda)\sigma^2(\sigma-1)^2\times\\
 & &\\
 & &Re\langle i(-\Delta+V+\lambda+2E(\lambda))^{-1}P_{c}^{lin}(\phi_{lin})^{2\sigma-1}(z\cdot\xi^{lin})^2, (\phi_{lin})^{2\sigma-1}(z\cdot\xi^{lin})^2\rangle,
\end{array}
$$ where, recall the definition of $z^{*} \Gamma_{0}(z,\bar{z})z$ in ~\eqref{eq:Gamma0}. Observe $\tilde{M}_{2,2}=0$ by the same argument as in proving ~\eqref{eq:H22}. Hence $$-2Re\langle R_{2,0}, K_{2,0}\rangle =\frac{1}{2}z^{*} \Gamma_{0}(z,\bar{z})z+\mathcal{O}(\delta_{\infty}^{4\sigma-1}|z|^4).$$

These together with ~\eqref{eq:LastStep} imply Equation ~\eqref{eq:KeyTerm}.

The proof is complete.
\begin{flushright}
$\square$
\end{flushright}
In the rest of this section we prove Lemma ~\ref{LM:Junk} by considering the cases $\sigma=1$ and $\sigma>1$ separately.
\begin{flushleft}
{\bf{Proof of Lemma ~\ref{LM:Junk} for $\sigma=1$}}
\end{flushleft}
In what follows we only study the term $\Phi_{5}$ and part of $\Phi_{4}.$ The estimates on the other terms are similar and easier.

We start with analyzing $JN_{2,1}$ since the terms are related to it. The definition in ~\eqref{eq:ThirdOrderJN} and the fact $\|\rho-\omega\|_{H^2}=\mathcal{O}(\delta_{\infty}^2|z|)$ imply that in the space $\langle x\rangle^{-4} H^2$
\begin{equation}\label{eq:JN21}
JN_{2,1}=-\frac{1}{2}\rho^2\bar\rho \left(
\begin{array}{lll}
i\\
1
\end{array}
\right)+\mathcal{O}(\delta_{\infty}|z|^3).
\end{equation}

Now we turn to $\Phi_{5}$. Recall the definition of $R_{2,1}$ from ~\eqref{eq:difRgeq3}. By the facts $P_{c}\left(
\begin{array}{lll}
\rho\\
0
\end{array}
\right)=P_{c}\left(
\begin{array}{lll}
0\\
\omega
\end{array}
\right)=0$ and $\rho-\omega=\mathcal{O}(\delta_{\infty}^2|z|)$ in ~\eqref{eq:asympto} we obtain $$\Upsilon_{1,1}P_{c}\left(
\begin{array}{lll}
i\omega\\
\rho
\end{array}
\right)=\Upsilon_{1,1}P_{c}\left(
\begin{array}{lll}
i(\omega-\rho)\\
\rho-\omega
\end{array}
\right)=\mathcal{O}(\delta_{\infty}^2|z|^3).$$
This together with ~\eqref{eq:JN21}, ~\eqref{mapping} implies that $$
\begin{array}{lll}
R_{2,1}&=&-\frac{1}{2}[(-\Delta+V+\lambda)J+iE(\lambda)]^{-1}P_{c}^{lin}\rho^2\bar\rho \left(
\begin{array}{lll}
i\\
1
\end{array}
\right)+\mathcal{O}(\delta_{\infty}|z|^3).
\end{array}
$$
Consequently
\begin{equation}\label{eq:RePhi5}
\Phi_{5}=\Theta_{2,2}+\mathcal{O}(\delta_{\infty}^3 |z|^4)
\end{equation} with $$
\begin{array}{lll}
\Theta_{2,2}&:=&-\frac{1}{2}\langle [(-\Delta+V+\lambda)J+iE(\lambda)]^{-1}P_{c}^{lin}\rho^2\bar\rho\left(
\begin{array}{lll}
i\\
1
\end{array}
\right), (\phi^{\lambda})^{2}\rho\left(
\begin{array}{lll}
-i\\
1
\end{array}
\right)\rangle.
\end{array}
$$

Now we claim $$\Theta_{2,2}=0$$ which trivially implies the desired estimate on $\Phi_{5}$.

To prove the claim we diagonalize the matrix $J$ to obtain a convenient form. Recall the definition of the unitary matrix $U$ in ~\eqref{eq:MatrixA}. Insert $UU^{*}=Id$ into appropriate places and use the fact $U^* JU=i\sigma_{3}$ to obtain
\begin{equation}\label{eq:DesiredEst}
\begin{array}{lll}
\Theta_{2,2}
&=&\frac{1}{2}\langle i\sigma_3[(-\Delta+V+\lambda)Id+\sigma_3 E(\lambda)]^{-1}\  P_{c}^{lin}\ \rho^2\bar\rho  U^*\left(
\begin{array}{lll}
i\\
1
\end{array}
\right), (\phi^{\lambda})^{2}\rho U^*\left(
\begin{array}{lll}
-i\\
1
\end{array}
\right)\rangle\\
&=&\langle i\sigma_3[(-\Delta+V+\lambda)Id+\sigma_3 E(\lambda)]^{-1}\  P_{c}^{lin}\ \rho^2\bar\rho  \left(
\begin{array}{lll}
0\\
1
\end{array}
\right), (\phi^{\lambda})^{2}\rho \left(
\begin{array}{lll}
-i\\
0
\end{array}
\right)\rangle\\
&=&0.
\end{array}
\end{equation} This last line follows from the observations that the column vector functions $\left(
\begin{array}{lll}
0\\
1
\end{array}
\right)$ and $\left(
\begin{array}{lll}
i\\
0
\end{array}
\right)$ are `disjoint', and the operator $\sigma_3[(-\Delta+V+\lambda)Id+\sigma_3 E(\lambda)]^{-1}$ is diagonal.

Now we choose a `difficult' term in $\Phi_4$ to study: $$D_{2,2}:=
\langle \left(
\begin{array}{lll}
A_{2,1}^{(1)}\partial_{\lambda}\phi^{\lambda}\\
A_{2,1}^{(2)}\phi^{\lambda}
\end{array}
\right),\left(
\begin{array}{lll}
-i (\phi^{\lambda})^{2}\omega\\
 (\phi^{\lambda})^{2}\rho
\end{array}
\right)\rangle.$$ Put the definitions of $A_{2,1}^{(1)}$ and $A_{2,1}^{(2)}$ in ~\eqref{eq:pkmn} into the expression to find $$
\begin{array}{lll}
D_{2,2}&=&\frac{1}{iE(\lambda)\langle \phi^{\lambda},\partial_{\lambda}\phi^{\lambda}\rangle }D_{2,2}^{(1)}-\frac{1}{2E(\lambda)\langle \phi^{\lambda},\partial_{\lambda}\phi^{\lambda}\rangle}D_{2,2}^{(2)}+\frac{A_{2,1}^{(1)}}{iE(\lambda)}\langle \phi^{\lambda},(\phi^{\lambda})^2\rho\rangle.
\end{array}
$$ with $$D_{2,2}^{(1)}:=\langle N_{2,1}^{Im},\phi^{\lambda}\rangle\langle \partial_{\lambda}\phi^{\lambda},-i(\phi^{\lambda})^2\omega\rangle-\langle N_{2,1}^{Re},\partial_{\lambda}\phi^{\lambda}\rangle \langle \phi^{\lambda},(\phi^{\lambda})^2\rho\rangle$$ and $$
\begin{array}{lll}
D_{2,2}^{(2)}&:=&\Upsilon_{1,1}[\langle \omega,\phi^{\lambda}\rangle\langle \partial_{\lambda}\phi^{\lambda},-i(\phi^{\lambda})^2 \omega\rangle+i\langle \rho,\partial_{\lambda}\phi^{\lambda}\rangle\langle \phi^{\lambda},(\phi^{\lambda})^2\rho\rangle]\\
&=&\Upsilon_{1,1}[\langle \omega-\rho,\phi^{\lambda}\rangle\langle \partial_{\lambda}\phi^{\lambda},-i(\phi^{\lambda})^2 \omega\rangle+i\langle \rho-\omega,\partial_{\lambda}\phi^{\lambda}\rangle\langle \phi^{\lambda},(\phi^{\lambda})^2\rho\rangle]
\end{array}
$$ where in the last step the facts $\phi^{\lambda}\perp \rho,\ \partial_{\lambda}\phi^{\lambda}\perp \omega$ in ~\eqref{eq:Orthogonality} are used.

Now we estimate all the three terms in the definition of $D_{2,2}$.

It is easy to see the third term is of the order $\mathcal{O}(\delta_{\infty}^3 |z|^{4})$ by the estimate of $A_{2,1}^{(1)}$ in Proposition ~\ref{Prop:Parameters}.

By the estimate $\rho-\omega=\mathcal{O}(\delta_{\infty}^2 |z|)$ it is not hard to obtain $$D_{2,2}^{(2)},\ -\frac{1}{2E(\lambda)\langle \phi^{\lambda},\partial_{\lambda}\phi^{\lambda}\rangle}D_{2,2}^{(2)}=\mathcal{O}(\delta_{\infty}^3|z|^4).$$

To estimate $D_{2,1}^{(1)}$ we use the definitions of $N_{2,1}^{Re}$ and $N_{2,1}^{Im}$ in ~\eqref{eq:ThirdOrderJN} and various estimates in Proposition ~\ref{Prop:Parameters} to find $$N_{2,1}^{Im}=-\frac{1}{4i}\rho^{2}\bar{\rho}+\mathcal{O}(\delta_{\infty}|z|^3),\ \ \ N_{2,1}^{Re}=-\frac{1}{4}\rho^{2}\bar{\rho}+\mathcal{O}(\delta_{\infty}|z|^3).$$ Put this into the expression of $D_{2,1}^{(1)}$ and use ~\eqref{eqn:perturb} and ~\eqref{eq:LambdaPhi2} on $\phi^{\lambda}$ and $\partial_{\lambda}\phi^{\lambda},$ after the cancelation of the terms of order $\delta_{\infty}^{2}|z|^4$ we obtain $$D_{2,1}^{(1)}, \ \frac{1}{iE(\lambda)\langle \phi^{\lambda},\partial_{\lambda}\phi^{\lambda}\rangle } D_{2,1}^{(1)} =\mathcal{O}(\delta_{\infty}^3|z|^4).$$

Collecting all the estimates we obtain $$D_{2,2}=\mathcal{O}(\delta_{\infty}^3 |z|^{4}).$$

The proof is complete.
\begin{flushright}
$\square$
\end{flushright}
{\bf{Proof of Lemma ~\ref{LM:Junk} for $\sigma>1$}}\\
In the case $\sigma> 1$ the strategy has to be different since some observations for $\sigma=1$, for example ~\eqref{eq:DesiredEst}, do not hold any more. Instead we use an important observation resulted from Theorem ~\ref{THM:MainTheorem2} whose second statement requires that $\xi_{n}^{lin}=\frac{x_{n}}{|x|}\xi(|x|)$, $n=1,2,\cdots, N=d,$ for some function $\xi^{lin}.$ By Lemma ~\ref{mainLem2} this implies that $$\left(
\begin{array}{lll}
\xi_{n}\\
i\eta_{n}
\end{array}
\right)=\frac{x_{n}}{|x|}\left(
\begin{array}{lll}
\xi(|x|)\\
i\eta(|x|)
\end{array}
\right)$$ for some real functions $\xi$ and $\eta$.

This makes $\Omega_{\sigma>1}=0$ by observing $\xi_{m}\eta_{n}-\xi_{n}\eta_{m}=0.$

In estimating the other terms on the right hand side of ~\eqref{eq:JunkTerms2} we only study $\Phi_{5}$, the estimation on the other terms are similar.

After some manipulation similar to that in ~\eqref{eq:RePhi5} we find
\begin{equation}\label{eq:phi5}
\Phi_{5}=iC_{3}(\sigma)D_1+iC_{4}(\sigma)D_2+\mathcal{O}(\delta_{\infty}^{4\sigma-1}|z|^4)\ \text{for some}\ C_{3},\ C_{4}\in \mathbb{R},
\end{equation} where $D_1$ and $D_2$ are defined as $$D_1:=\langle [-\Delta+V+\lambda+E(\lambda)]^{-1}P_{c}^{lin}(\phi^{\lambda})^{2\sigma-2}\rho^{2}\bar\rho,(\phi^{\lambda})^{2\sigma}\rho\rangle$$ $$D_2:=\langle [-\Delta+V+\lambda-E(\lambda)]^{-1}P_{c}^{lin}(\phi^{\lambda})^{2\sigma-2}\rho^{2}\bar\rho,(\phi^{\lambda})^{2\sigma}\rho\rangle.$$ We claim that $$D_1,\ D_2\in \mathbb{R}.$$ If the claim holds then it together with ~\eqref{eq:phi5} yields the desired estimate $$Re\Phi_5=\mathcal{O}(\delta_{\infty}^{4\sigma-1}|z|^4).$$

Now we prove the claim for $D_1$, the proof for $D_2$ is almost the same. The facts $\xi_{k}=\frac{x_{k}}{|x|}\xi(|x|)$ and the potential $V$ and $\phi^{\lambda}$ are spherically symmetric imply $$
\begin{array}{lll}
D_1&=&\displaystyle\sum_{k,l\leq N=d,\ k\not= l}z_{k}^{2}[\bar{z}_{l}]^2 D(k,l)+\sum_{k,l\leq N=d,\ k\not= l}|z_{k}|^{2}|z_{l}|^2 D(k,l)+\sum_{k\leq N}|z_{k}|^4 D(k,k)\\
&=&D(1,2)\displaystyle\sum_{k,l\leq N=d,\ k\not= l}z_{k}^{2}[\bar{z}_{l}]^2 +D(1,2)\sum_{k,l\leq N=d,\ k\not= l}|z_{k}|^{2}|z_{l}|^2 +D(1,1)\sum_{k\leq N}|z_{k}|^4
\end{array}
$$ where $D(k,l):=\langle [-\Delta+V+\lambda+E(\lambda)]^{-1}P_{c}^{lin}(\phi^{\lambda})^{2\sigma-2}\xi_k^2 \xi_{l},(\phi^{\lambda})^{2\sigma}\xi_l\rangle.$ The last line follows from the observations $D(k,l)=D(1,2)\in \mathbb{R}$ if $k\not=l$ and $D(k,k)=D(1,1)\in \mathbb{R}$ resulted from the permutation of coordinates.
By observing $\displaystyle\sum_{k,l\leq N=d,\ k\not= l}z_{k}^{2}[\bar{z}_{l}]^2 =|\sum_{k}z_{k}^2|^2-\sum_{l}|z_{l}|^4\in \mathbb{R}$ we have $D_1\in \mathbb{R}$.

The proof is complete.
\begin{flushright}
$\square$
\end{flushright}


\bibliographystyle{abbrv}
\addcontentsline{toc}{chapter}{Bibliography}
\bibliography{biblio}

\def\cprime{$'$} \def\cprime{$'$} \def\cprime{$'$} \def\cprime{$'$}
  \def\cprime{$'$} \def\cprime{$'$}
\begin{thebibliography}{10}

\bibitem{Boyd:08}
R.~Boyd.
\newblock {\em Nonlinear {O}ptics, 3rd Edition}.
\newblock Academic Press, 2008.

\bibitem{BuSu}
V.~S. Buslaev and C.~Sulem.
\newblock On asymptotic stability of solitary waves for nonlinear
  {S}chr\"odinger equations.
\newblock {\em Ann. Inst. H. Poincar\'e Anal. Non Lin\'eaire}, 20(3):419--475,
  2003.

\bibitem{Cuccagna:03}
S.~Cuccagna.
\newblock {On asymptotic stability of ground states of NLS}.
\newblock {\em Rev. Math. Phys.}, 155(8):877--903, 2003.

\bibitem{ESY:07}
L.~Erd\"os, B.~Schlein, and H.~Yau.
\newblock Derivation of the cubic non-linear {S}chr\"odinger equation from
  quantum dynamics of many-body systems.
\newblock {\em Invent. Math.}, 59(12):1659--1741, 2007.

\bibitem{GS2}
Z.~Gang and I.~M. Sigal.
\newblock On soliton dynamics in nonlinear {S}chr\"odinger equations.
\newblock {\em Geometric and Funcation Analysis}, 116(6):1377--1390, 2006.

\bibitem{GaWe}
Z.~Gang and M.~I. Weinstein.
\newblock Dynamics of nonlinear {S}chr\"odinger/{G}ross-{P}itaevskii equations:
  mass transfer in systems with solitons and degenerate neutral modes.
\newblock {\em Anal. PDE}, 1(3):267--322, 2008.

\bibitem{Newell-Maloney:03}
J.~Maloney and A.~Newell.
\newblock {\em Nonlinear {O}ptics}.
\newblock Westview Press, 2003.

\bibitem{Pitaevskii-Stringari:03}
L.~P. Pitaevskii and S.~Stringari.
\newblock {\em Bose {E}instein {C}ondensation}.
\newblock Oxford University Press, 2003.

\bibitem{SW:04}
A.~Soffer and M.~Weinstein.
\newblock {Selection of the ground state for nonlinear Schr\"odinger
  equations.}
\newblock {\em Rev. Math. Phys.}, 16(8):977--1071, 2004.

\bibitem{SW-PRL:05}
A.~Soffer and M.~Weinstein.
\newblock Theory of nonlinear dispersive waves and ground state selection.
\newblock {\em Phys. Rev. Lett.}, page 213905, 2005.

\bibitem{SW:99}
A.~Soffer and M.~I. Weinstein.
\newblock Resonances, radiation damping and instability in {H}amiltonian
  nonlinear wave equations.
\newblock {\em Invent. Math.}, 136(1):9--74, 1999.

\bibitem{TsaiYau02}
T.-P. Tsai and H.-T. Yau.
\newblock Asymptotic dynamics of nonlinear {S}chr\"odinger equations:
  resonance-dominated and dispersion-dominated solutions.
\newblock {\em Comm. Pure Appl. Math.}, 55(2):153--216, 2002.

\bibitem{MR1992875}
T.-P. Tsai and H.-T. Yau.
\newblock Classification of asymptotic profiles for nonlinear {S}chr\"odinger
  equations with small initial data.
\newblock {\em Adv. Theor. Math. Phys.}, 6(1):107--139, 2002.

\end{thebibliography}
\end{document}